\newtheorem{pro}{Proposition}[section]
\newtheorem{teo}[pro]{Theorem}
\newtheorem{defi}[pro]{Definition}
\newtheorem{lem}[pro]{Lemma}
\newtheorem{cor}[pro]{Corollary}
\newtheorem{remark}[pro]{Remark}
\newcommand{\pd}{{\mathrm{pd}}}
\newcommand{\Gpd}{{\mathrm{Gpd}}}
\newcommand{\GPd}{{\mathrm{GPd}}}
\newcommand{\modu}{{\mathrm{mod}}}
\newcommand{\Mod}{{\mathrm{Mod}}}
\newcommand{\ind}{{\mathrm{ind}}}
\newcommand{\Ima}{{\mathrm{Im}}}
\newcommand{\Ker}{{\mathrm{Ker}}}
\newcommand{\Coker}{{\mathrm{Coker}}}
\newcommand{\proj}{{\mathrm{proj}}}
\newcommand{\Proj}{{\mathrm{Proj}}}
\newcommand{\Gproj}{{\mathrm{Gproj}}}
\newcommand{\GProj}{{\mathrm{GProj}}}
\newcommand{\Ginj}{{\mathrm{Ginj}}}
\newcommand{\inj}{{\mathrm{inj}}}
\newcommand{\id}{{\mathrm{id}}}
\newcommand{\Gid}{{\mathrm{Gid}}}
\newcommand{\fid}{{\mathrm{fid}}}
\newcommand{\pdF}{{\mathrm{pd}_F}}
\newcommand{\findim}{{\mathrm{fin.dim}}}
\newcommand{\fpd}{{\mathrm{fpd}}}
\newcommand{\finid}{{\mathrm{fid}}}
\newcommand{\Gfindim}{{\mathrm{Gfin.dim}}}
\newcommand{\Ggldim}{{\mathrm{Ggl.dim}}}
\newcommand{\resdim}{{\mathrm{resdim}}}
\newcommand{\tresdim}{{\mathrm{t.resdim}}}
\newcommand{\repdim}{{\mathrm{rep.dim}}}
\newcommand{\coresdim}{{\mathrm{coresdim}}}
\newcommand{\gldim}{{\mathrm{gl.dim}}}
\newcommand{\Hom}{{\mathrm{Hom}}}
\newcommand{\Obj}{{\mathrm{Obj}}}
\newcommand{\uHom}{\underline{{\mathrm{Hom}}}}
\newcommand{\End}{{\mathrm{End}}}
\newcommand{\uEnd}{\underline{{\mathrm{End}}}}
\newcommand{\Ext}{{\mathrm{Ext}}}
\newcommand{\Phidim}{{\Phi\,\mathrm{dim}}}
\newcommand{\Psidim}{{\Psi\,\mathrm{dim}}}
\newcommand{\Fixdim}{{\Phi_{\X}\mathrm{dim}}}
\newcommand{\Fiedim}{{\Phi_{\E}\mathrm{dim}}}
\newcommand{\Psix}{{\Psi_{\X}}}
\newcommand{\Psixdim}{{\Psi_{\X}\mathrm{dim}}}
\newcommand{\Psiedim}{{\Psi_{\E}\mathrm{dim}}}
\newcommand{\PsiGpdim}{{\Psi_{\mathrm{Gproj}(\Lambda)}\mathrm{dim}}}
\newcommand{\Oxdim}{{\Omega_{\X}\mathrm{dim}}}
\newcommand{\Oedim}{{\Omega_{\E}\mathrm{dim}}}
\newcommand{\E}{{\mathcal{E}}}
\newcommand{\F}{{\mathcal{F}}}
\newcommand{\Z}{{\mathbb{Z}}}
\newcommand{\N}{{\mathbb{N}}}
\newcommand{\I}{{\mathcal{I}}}
\newcommand{\Q}{{\mathcal{P}}}
\newcommand{\GP}{{\mathcal{GP}}}
\newcommand{\GI}{{\mathcal{GI}}}
\newcommand{\C}{{\mathcal{C}}}
\newcommand{\A}{{\mathcal{A}}}
\newcommand{\Ab}{{\mathcal{A}b}}
\newcommand{\B}{{\mathcal{B}}}
\newcommand{\X}{{\mathcal{X}}}
\newcommand{\uC}{\underline{\mathcal{C}}}
\newcommand{\Y}{{\mathcal{Y}}}
\newcommand{\add}{{\mathrm{add}}}
\newcommand{\Tr}{{\mathrm{Tr}}}
\newcommand{\RR}{{\mathrm{RightRes}}}
\newcommand{\LR}{{\mathrm{LeftRes}}}
\newcommand{\rad}{{\mathrm{rad}}}
\newcommand{\Fact}{{\mathrm{Fact}}}
\newcommand{\K}{{\mathrm{K}}}
\newenvironment{dem}{\noindent {\bf Proof.}}{\hfill $\Box$\\}
\begin{document}
\title[Relative Igusa-Todorov functions]{ Relative Igusa-Todorov functions and relative 
homological dimensions}
\thanks{{\it{$2010$ Mathematics Subject Classifications.}} Primary 16E10, 18E10, 18G25. Secondary 16E05, 16E30.\\
The authors thanks  the Project PAPIIT-Universidad Nacional Aut\'onoma de M\'exico IN102914. This work has been partially supported by  project MathAmSud-RepHomol.\\
{\it{Key words:}} Igusa-Todorov functions, abelian categories, relative homology, finitistic dimension.\\
}
\author{ Marcelo Lanzilotta,\\ Octavio Mendoza}
\date{}

\maketitle

\centerline{\small{We dedicate this paper to Eduardo Marcos on
 his sixtieth birthday}}
 
\begin{abstract}
We develope the theory of the $\E$-relative Igusa-Todorov functions in an exact $ IT$-context $(\C,\E)$ (see Definition \ref{IT-context}).  In the case when $\C=\modu\, (\Lambda)$ is the category of finitely generated left 
 $\Lambda$-modules, for an artin algebra $\Lambda,$ and $\E$  is the class of all exact sequences in $\C,$ we recover 
 the usual Igusa-Todorov functions \cite{IT}.  We use the setting of the exact structures and the 
 Auslander-Solberg relative homological theory to generalise the original Igusa-Todorov's results. Furthermore, we introduce the 
 $\E$-relative Igusa-Todorov dimension and also we obtain relationships with the relative global and relative finitistic dimensions and the Gorenstein homological dimensions.
\end{abstract}

\setcounter{tocdepth}{1}
\tableofcontents

\section*{Introduction}

Let $\Lambda$ be an artin algebra. Let us denote by $\modu\,(\Lambda)$ the category of finitely generated left $\Lambda$-modules. 
We use $\pd\,M$ to denote the projective dimension of any $M\in\modu\,(A).$ Recall that
 $$\findim\,(\Lambda):=\sup\{\pd\,M\;:\;M\in\modu\,(\Lambda)\text{ and } \pd\,M<\infty\}$$ is the  finitistic dimension of $\Lambda,$ and also that $$\gldim\,(\Lambda):=\sup\{\pd\,M\;:\;M\in\modu\,(\Lambda)\}$$ is the global dimension of $\Lambda.$ The interest in the finitistic dimension is because of the
 ``finitistic dimension conjecture", which is still open, and states that {\it{the finitistic dimension
 of any artin algebra is finite.}} This conjecture is one of the main
problems in the representation theory of algebras and has depth connections with
the solution of several others important conjectures which are still open. The main
point is that all of those conjectures would be valid if so would be the  finitistic
dimension conjecture. Up to now, only several special cases, for this conjecture, are
verified. Furthermore, as much as we know, only several methods exist in order to detect the finiteness of the finitistic dimension of some given algebra. The reader could look in \cite {HZ}, \cite{XiXu}, and references therein, for the development related with the finitistic dimension conjecture.
\

In \cite{IT}, K. Igusa and G. Todorov defined two functions $\Phi,\Psi:\modu\,(\Lambda)\to\mathbb{N},$ known as 
Igusa-Todorov functions ($ IT$-functions, for short).  These 
$ IT$-functions determine new homological measures, generalising the notion of projective dimension, 
and have become a powerful tool to understand better the finitistic dimension conjecture. A lot of new 
ideas have been developed around the use of $ IT$-functions \cite{FLM, HL,HLM,HLM2,HLM3,W,Wei,Xi1,Xi2,Xi3}. 
% for example: algebras of representation dimension at most 3, algebras with radical square 
%zero, monomial algebras,  Gorenstein and string algebras. In fact, it is expected that all artin algebras are 
%Igusa-Todorov.
\

The notion and the use of the $ IT$-functions has been expanding to other settings, for example into 
the bounded derived category \cite{Xu} and finite dimensional comodules for left semi-perfect coalgebras \cite{HaLM}. 
\

Following \cite{HL} and \cite{HLM}, and taking the supremum over $\modu\,(\Lambda)$ of $\Phi$ and $\Psi,$ respectively, we get the Igusa-Todorov dimensions $\Phi\mathrm{dim}\,(\Lambda)$ and $\Psi\mathrm{dim}\,(\Lambda).$ Since the functions $\Phi$ and $\Psi$ are  refinements of the projective dimension (see {\cite{IT}), we have   
$\findim\,(\Lambda)  \leq \Phi\mathrm{dim}\,(\Lambda) \leq\Psi\mathrm{dim}\,(\Lambda)\leq \gldim\,(\Lambda).$
Therefore, by taking into account the previous inequalities, it is settled in \cite{FLM}  the ``$\Psi$-dimension conjecture": 
{\it The $\Psi$-dimension of any artin algebra is finite.}
Observe that, the $\Psi$-dimension conjecture implies the finitistic dimension conjecture; and hence it could be used as a tool to deal with the finitistic dimension conjecture. 
\

The study of  $ IT$-functions is already interesting in itself. Surprisingly, as was discovered by F. Huard and M. Lanzilotta  in \cite{HL},  the Igusa-Todorov functions can be used to characterise self-injective algebras. If we want to go further, a natural question arise: is it possible to 
define $ IT$-functions in a more general contexts as exact categories? In this paper, we show that such  a generalisation is possible into the setting of special exact categories that we call $ IT$-contexts. In such $ IT$-contexts, we develop the theory of the relative $IT$-functions, and correspondingly, the theory 
of the relative Igusa-Todorov dimensions. One of the results we got is that Frobenius categories can be characterised by using relative $ IT$-functions, generalising some of the results obtained in \cite{HaLM, HL} for quasi-co-Frobenius coalgebras and selfinjective algebras, respectively. Moreover, some important homological dimensions (as the 
representation dimension introduced by M. Auslander in \cite{A2}) can be seen as a particular case of a relative Igusa-Todorov dimension. Therefore, the generalization of $ IT$-functions allows us to obtain deeper connections with other relative homological dimensions. 
\

In Section 1,  we collect all the needed background material that is necessary for the development of the paper. We start with approximation theory in the sense of M. Auslander and R. O. Buchweitz \cite{ABu}, and here we recall several relative homological dimensions.  We continue with the notion of cotorsion pairs and we also deal with the theory of exact categories by recalling main definitions and  some fundamental results as: Horseshoe, comparison and Schanuel Lemmas. After that, we construct exact categories from abelian ones. To finish this section, some  concepts of stable categories and  relative syzygy functors needed in this paper are introduced.
\

In Section 2, we define and develop the medullary part of the theory of relative Igusa-Todorov functions. To do that, it is introduced the notions of exact and abelian $IT$-contexts. We state and prove their basic properties, for exact categories, the corresponding generalisations of those functions given by K. Igusa and G. Todorov in \cite{IT}. 
\

In Section 3, we introduce the relative Igusa-Todorov dimensions $\Phi_\E\mathrm{dim}(\C)$ and $\Psi_\E\mathrm{dim}(\C)$  of an 
 exact $IT$-context $(\C,\E).$ Dually, for an exact $IT^{op}$-context $(\C,\E),$ we have the relative Igusa-Todorov dimensions $\Phi^\E\mathrm{dim}(\C)$ and $\Psi^\E\mathrm{dim}(\C).$ These dimensions are new relative homological dimensions encoding fundamental properties of the given category $\C.$ In Theorem \ref{ThmF1}, we characterise small abelian Frobenius KS-categories  in terms of their relative $IT$-dimensions. It is also presented several connections between these relative $IT$-dimensions and the usual relative homological dimensions. We also deal with the representation dimension of an algebra. In this section, we show that the representation dimension  can be seen as a particular case of a relative $IT$-dimension. We also get as a corollary, see Corollary \ref{rkbasico1.1},  the main result of \cite{IT}.
\

Section 4 is devoted to the study of the relative syzygy functor $\Omega_\E:{}^\perp\Q(\E)\to {}^\perp\Q(\E),$ where $(\C,\E)$ is an 
exact $IT$-context which is idempotent complete, and  $\Q(\E)$ is the class of the $\E$-projectives. Inspired by \cite{AB},  we prove that such a functor is full and faithful. As  a first consequence, we get that 
$\Phi_\E\mathrm{dim}(^\perp\Q(\E))=\Psi_\E\mathrm{dim}(^\perp\Q(\E))=0.$ Using the above equality, we were able to prove one of the main result of this section:  $\fpd_\E(\C)\leq\Fiedim\,(\C)\leq\Psiedim\,(\C)\leq \id_\E(\Q(\E)).$ Finally, we apply to different contexts the preceding inequalities obtaining results for  standardly stratifed algebras (see Section 7) and for any artin algebra with finite injective dimension.
\

The material presented in Section 5 deals with  $\Ext^i_\X(-,-)$ and $\Ext^i_\Y(-,-)$ which are the right derived functors 
 of $\Hom(-,-),$ for some ``good" pair of classes $\X$ and $\Y.$ We discuss the balance of these functors on different contexts, namely:
 left (right) compatible pairs, homologically compatible pairs and homologically compatible cotorsion pairs.  In this section, we obtain general theorems relating  relative $ IT$-dimensions and classical relative homological dimensions. 
\

We dedicate Section 6  to define the relative $n$-Igusa-Todorov categories, generalising the ideas in \cite{Wei}. It is also proven that the relative finitistic dimension of any $n$-IT category is finite. 
\

In Section 7, we give applications of the developed theory.  We specify some concrete types of $ IT$-contexts coming from the 
abelian category $\modu\,(\Lambda),$ where $\Lambda$ is an artin algebra. We start with tilting and cotilting $\Lambda$-modules, 
and after that, we consider the case when $\Lambda$ is a standardly stratified algebra. Finally, we also connect the relative $ IT$-dimensions 
with the Gorenstein projective (injective) dimension. In case $\Lambda$ is a Gorenstein algebra, we get Theorem \ref{GorAl}  where, 
in particular, appears the equalities 
\

$\Phi_{\Gproj(\Lambda)}\mathrm{dim}(\Lambda)=\PsiGpdim(\Lambda)=\Gid(\Gproj(\Lambda))=\Gpd(\Ginj(\Lambda))=\\=
\Phi^{\Ginj(\Lambda)}\mathrm{dim}(\Lambda)=\Psi^{\Ginj(\Lambda)}\mathrm{dim}(\Lambda)=\id\,({}_\Lambda\Lambda).$
\

The last result in this section is Theorem \ref{T1Sbalance2}. Here, we consider the case of a resolving, coresolving and functorially 
finite class $\mathcal{U}$ in $\modu\,(\Lambda).$ 
 \

\section{Relative homology}

We start this section by collecting all the background material that will be necessary in the sequel. 
\

 Let $\C$ be an additive category. For any  decomposition $C=X\oplus Y$ of $C\in\C,$ 
 we say that $X$ and $Y$ are direct summands of $C,$ and to say that $X$ is a direct summand of $C,$ we just write $X\mid C.$ 
\

Assume that $\C$ is an abelian category. We say that a class $\X$ of objects in $\C$ is {\bf closed under extensions}, if for any exact sequence $0\to A\to B\to C\to 0,$ with $A,C\in\X,$ we have that $B\in\X.$ Note that a closed extension class $\X,$ containing the zero element, is closed under isomorphisms and finite coproducts. 
\bigskip

{\sc Approximations.}
Let  $\X\subseteq \C$ be a class of objects in an abelian category $\C.$ A morphism $f:X\to C$ is said to be an {\bf $\X$-precover} of $C$ if 
$X\in\X$ and the map $\Hom_\C(X',f):\Hom_\C(X',X)\to\Hom_\C(X',C)$ is surjective for any $X'\in\X.$ In case any object $C\in\C$ 
admits an $\X$-precover $f:X\to C,$ it is said that $\X$ is a {\bf precovering class} in $\C.$ An $\X$-precover $f:X\to C$ is said to be 
an {\bf $\X$-cover} if the morphism $f$ is {\bf right-minimal}, that is,  for any $\alpha\in\End_\C(X)$ such that $f\alpha=f,$ it follows that $\alpha$ 
is an isomorphism. Note that, in case there are two $\X$-covers $f:X\to C$ and $f':X'\to C$ of $C,$ then there exists an 
isomorphism $\alpha:X\to X'$ such that $f'\alpha=f.$ Hence, in case there exists, an $\X$-cover of $C$ is unique up to isomorphism.\\ 
We will  freely use  the dual notion of an {\bf $\X$-preenvelope} (respectively, {\bf $\X$-envelope}) of an object $C\in\C,$ 
and also the notion of  {\bf preenveloping class} in $\C.$ Finally, we say that a class $\X\subseteq\C$  is {\bf functorially finite} if it is precovering and preenveloping.

{\sc Resolution and coresolution dimension.}
Let $C$ be an object in an abelian category $\C$ and  $\X\subseteq\C$ be a class of objects in $\C.$ Following Auslander-Buchweitz \cite{ABu}, the 
$\X$-{\bf{resolution dimension}} $\resdim_\X\,(C)$ of $C$ is the minimal non-negative integer $n$ such that there is an 
exact sequence $$0\to X_n\to\cdots\to X_1\to X_0\to C\to 0$$ with $X_i\in\X$ for $0\leq i\leq n.$ If such $n$ does not exist, we 
set $\resdim_\X\,(C):=\infty.$ We shall denote by $\X^\wedge$ the class of all objects $C\in\C$ 
such that $\resdim_\X\,(C)<\infty.$ Dually, we have the $\X$-{\bf{coresolution dimension}} $\coresdim_\X\,(C)$ of $C$ and the class $\X^\vee$ of all objects having finite $\X$-coresolution dimension. 
\

The {\bf{total $\X$-resolution dimension}} $\tresdim_\X\,(C)$ of $C$ is the minimal non-negative integer $n$ such that there is an 
exact sequence $$\eta_C:\quad 0\to X_n\to\cdots\to X_1\to X_0\to C\to 0$$ with $X_i\in\X,$ for $0\leq i\leq n,$ such that the complex $\Hom_\C(X,\eta_C)$ is acyclic for any $X\in\X.$ If such $n$ does not exist, we set $\tresdim_\X\,(C):=\infty.$ Note that, in general, we only have that 
$\resdim_\X(M)\leq\tresdim_\X(M),$ since any total $\X$-resolution is an $\X$-resolution.
\

 Following \cite{ABu}, we recall that a class $\omega\subseteq\C$ is 
 {\bf $\X$-injective}  if 
  $$\X\subseteq{}^\perp\omega:=\{M\in\C\;:\; \Ext_\C^i(M,W)=0\;\forall\,i\geq 1,\,
  \forall\,W\in\omega\}.$$ 
 We say that $\omega$ is a {\bf relative cogenerator} in $\X$ if  $\omega\subseteq\X$ and for any $X\in\X$ there is 
 an exact sequence $0\to X\to W\to X'\to 0$ with $W\in\omega$ and $X'\in\X.$ Dually, we have the notion of {\bf $\X$-projective} and of {\bf relative generator} in $\X.$ 

\begin{lem}\label{tresdim} Let $\X$ be a class of objects in $\C,$ which is closed under extensions and 
$0\in\X,$ and let $\omega$ be an $\X$-injective relative cogenerator in $\X.$ Then, the following statements hold true.
\begin{itemize}
\item[(a)]  For any $M\in\X^{\wedge}$ there is an exact sequence $0\to K_0\to X_0\stackrel{f_0}{\to}M\to 0$ in $\C,$ where $f_0:X_0\to M$ is an $\X$-precover of $M$ and $\resdim_\omega(K_0)=\resdim_\X(M)-1.$
\item[(b)] $\resdim_\X(M)=\tresdim_\X(M)\quad\;\text{for any}\;M\in\C.$
\end{itemize}

\end{lem}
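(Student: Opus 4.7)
The plan is to prove (a) by induction on $n := \resdim_\X(M)$, and then derive (b) by splicing the short exact sequence produced in (a) with an $\omega$-resolution of its kernel. A recurring auxiliary ingredient is the vanishing $\Ext^i_\C(X, Y) = 0$ for all $X \in \X$, $Y \in \omega^\wedge$, and $i \geq 1$, which follows by induction on $\resdim_\omega(Y)$ and dimension shifting along an $\omega$-resolution of $Y$, using the hypothesis $\X \subseteq {}^\perp\omega$.

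For (a), the base case $n = 0$ is handled by taking $X_0 = M$ and $K_0 = 0$. For the inductive step $n \geq 1$, I pick an $\X$-resolution of shortest length $0 \to Z_n \to \cdots \to Z_0 \to M \to 0$ and set $L := \ker(Z_0 \to M)$, so $\resdim_\X(L) = n - 1$. By the inductive hypothesis applied to $L$, there is a short exact sequence $0 \to K_L \to Y_L \to L \to 0$ with $Y_L \in \X$ and $\resdim_\omega(K_L) = n - 2$. Using the relative cogenerator property on $Y_L$, I embed $Y_L$ into some $W \in \omega$ with cokernel $W' \in \X$, and form the iterated pushout of the composition $Y_L \twoheadrightarrow L \hookrightarrow Z_0$ along $Y_L \hookrightarrow W$. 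The resulting object $P$ fits in $0 \to Z_0 \to P \to W' \to 0$, so $P \in \X$ by closure of $\X$ under extensions, and the pushout square yields a short exact sequence
\[
0 \longrightarrow P_1 \longrightarrow P \longrightarrow M \longrightarrow 0,
\]
whose kernel $P_1$ in turn fits into $0 \to K_L \to W \to P_1 \to 0$. Splicing this with an $\omega$-resolution of $K_L$ of length $n-2$ produces one for $P_1$ of length $n-1$, so $P_1 \in \omega^\wedge$ with $\resdim_\omega(P_1) \leq n-1$; the reverse inequality is forced by the minimality of the original $\X$-resolution of $M$. The $\X$-precover property of $f_0 \colon P \to M$ is automatic, since in $\Hom_\C(X, P) \to \Hom_\C(X, M) \to \Ext^1_\C(X, P_1)$ the rightmost term vanishes by the auxiliary ingredient.

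For (b), the inequality $\resdim_\X(M) \leq \tresdim_\X(M)$ is immediate from the definitions, so assume $n := \resdim_\X(M) < \infty$. Apply (a) to get $0 \to K_0 \to X_0 \to M \to 0$, pick any $\omega$-resolution $0 \to W_{n-1} \to \cdots \to W_0 \to K_0 \to 0$, and splice to obtain an $\X$-resolution
\[
\eta_M \colon\quad 0 \longrightarrow W_{n-1} \longrightarrow \cdots \longrightarrow W_0 \longrightarrow X_0 \longrightarrow M \longrightarrow 0
\]
of length $n$. For any $X \in \X$, the acyclicity of $\Hom_\C(X, \eta_M)$ at each position reduces, via the long exact sequences of $\Ext$ for the short exact sequences carved out by the intermediate syzygies, to the vanishing of $\Ext^1_\C(X, -)$ at objects lying in $\omega^\wedge$, which is precisely the auxiliary ingredient. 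Hence $\tresdim_\X(M) \leq n$, giving equality.

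The main obstacle will be the diagrammatic merging step in (a): turning the two separate sequences $0 \to K_L \to Y_L \to L \to 0$ and $0 \to L \to Z_0 \to M \to 0$ into a single SES for $M$ with kernel in $\omega^\wedge$ requires the two-stage pushout described above, in which the cogenerator embedding of $Y_L$ into $\omega$ and the closure of $\X$ under extensions must interact precisely to keep the middle term in $\X$ while simultaneously controlling the $\omega$-dimension of the new kernel.
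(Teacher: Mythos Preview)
Your argument is correct. For (a) you have essentially reconstructed the Auslander--Buchweitz construction that the paper cites (their proof simply says ``adapt \cite[Theorem~1.1]{ABu}''), so the approaches coincide. For (b) there is a minor divergence worth noting: the paper iterates (a), applying it successively to $M$, then to $K_0\in\omega^\wedge\subseteq\X^\wedge$, then to $K_1$, and so on, so that each map in the resulting length-$n$ resolution is an $\X$-precover by construction and totality is automatic. You instead apply (a) only once and splice with an arbitrary $\omega$-resolution of $K_0$, then verify totality separately via the vanishing $\Ext^1_\C(\X,\omega^\wedge)=0$. Both routes are short and standard; yours invokes (a) only once at the cost of an explicit acyclicity check, while the paper's avoids that check at the cost of iterating (a).
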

\begin{dem} (a) The proof given in \cite[Theorem 1.1]{ABu} can be adapted to get (a).
\

(b) We already have that $\resdim_\X(M)\leq\tresdim_\X(M).$ Assume that $n:=\resdim_\X(M)<\infty.$ Then, by (a) there is an exact sequence $0\to K_0\to X_0\stackrel{f_0}{\to}M\to 0$ in $\C,$ where $f_0:X_0\to M$ is an $\X$-precover of $M$ and $\resdim_\omega(K_0)=n-1.$  In particular, using that $\omega\subseteq \X,$ we have that $K_0\in\X^\wedge.$   Applying the item (a) several times, we get a total $\X$-resolution 
$0\to X_n\to\cdots\to X_1\to X_0\to M\to 0,$ proving that $\tresdim_\X(M)\leq n.$  
\end{dem}

{\sc Relative projective dimensions in abelian categories.} Let $\C$ be an abelian category and $\X$ be a class of objects in $\C.$ Following M. Auslander and R. O. Buchweitz \cite{ABu}, for any object $M\in\C,$ the {\bf{relative projective dimension}} of $M$ with respect to $\X$ is defined as $$\pd_{\X}\,(M):=\mathrm{min}\,\{n\geq 0\,:\,\Ext_\C^j(M,-)|_{\X}=0  \text{ for any } j>n\}.$$ Dually, we denote by
  $\mathrm{id}_{\X}\,(M)$ the  {\bf{relative injective dimension}} of
  $M$ with respect to $\X.$ Furthermore, for any class $\Y\subseteq\C,$ we set 
  $$\pd_\X\,(\Y):=\mathrm{sup}\,\{\pd_\X\,Y\;:\; Y\in\Y\}\text{ and }\id_\X\,(\Y):=\mathrm{sup}\,\{\id_\X\,Y\;:\; Y\in\Y\}.$$ 
  If $\X=\C,$ we just write $\pd\,(\Y)$ and   $\id\,(\Y)$.
 \bigskip
 
   The following basic property is straightforward.

\begin{lem}\label{Lcam}   \cite{ABu} Let $\X$ and $\Y$  be  classes in $\C.$ Then $$\pd_\X\,(\Y)=\id_\Y\,(\X).$$ 
\end{lem}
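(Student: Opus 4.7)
The plan is to prove the equality by unfolding the definitions on both sides and observing the built-in symmetry: both quantities are controlled by the \emph{same} two-variable vanishing condition on $\Ext^{\bullet}_\C$ with arguments in $\Y \times \X$.

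Concretely, I would first show that for any $n\geq 0$, the inequality $\pd_\X(\Y)\leq n$ is equivalent to the statement
\[
\Ext^{j}_\C(Y,X)=0\quad\text{for all } Y\in\Y,\, X\in\X,\, j>n.
\]
This comes immediately from the definitions: $\pd_\X(\Y)\leq n$ means $\pd_\X(Y)\leq n$ for every $Y\in\Y$, and $\pd_\X(Y)\leq n$ means $\Ext^{j}_\C(Y,-)\big|_\X = 0$ for all $j>n$, which is exactly the double vanishing displayed above. Then I would run the dual argument, showing that $\id_\Y(\X)\leq n$ unfolds, via the definition of $\id_\Y$, to the \emph{same} vanishing condition, just read with the roles of the variables swapped in the quantifier ordering.

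Having two characterizations of the common condition, I would conclude by the standard ``smallest $n$'' argument: the set $S=\{n\geq 0 : \Ext^{j}_\C(Y,X)=0 \text{ for all }Y\in\Y,\,X\in\X,\,j>n\}$ has the same minimum element (or is empty, giving $\infty$, in both interpretations), so $\pd_\X(\Y)=\min S=\id_\Y(\X)$. Formally, one needs to be slightly careful at $\infty$: if no finite $n$ works, both sides equal $\infty$ by convention, so the equality still holds.

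There is essentially no substantive obstacle here; the lemma is a bookkeeping identity, and the only point that deserves a line of care is the order-of-suprema interchange
\[
\sup_{Y\in\Y}\sup_{X\in\X}\pd(\text{data in }(Y,X)) = \sup_{X\in\X}\sup_{Y\in\Y}\id(\text{data in }(Y,X)),
\]
which is justified because both iterated suprema reduce to a single supremum over $\Y\times\X\times\N$ of the indicator for $\Ext^{j}_\C(Y,X)\neq 0$. The argument is symmetric and takes at most a few lines.
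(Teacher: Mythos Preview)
Your proposal is correct and is precisely the ``straightforward'' unfolding of definitions that the paper alludes to without writing out: both $\pd_\X(\Y)\le n$ and $\id_\Y(\X)\le n$ are equivalent to the single vanishing condition $\Ext^j_\C(Y,X)=0$ for all $Y\in\Y$, $X\in\X$, $j>n$, whence the two suprema coincide (including the case $\infty$). The paper does not supply an explicit argument beyond calling the identity basic and citing \cite{ABu}, so there is nothing further to compare.
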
 

{\sc Cotorsion pairs.} Let $\C$ be an abelian category. For any class $\X$ of objects in $\C,$ we consider the classes
${}^{\perp_i}\X:=\cap_{X\in\X}\Ker\,\Ext^i_\C(-,X)$ and $\X^{\perp_i}:=\cap_{X\in\X}\Ker\,\Ext^i_\C(X,-),$ for any non negative 
integer $i.$ We recall that the class $\X$ is {\bf resolving} if it is closed under extensions, kernels of epimorphisms and contains the projectives. Dually, we have the notion of {\bf coresolving} for the class $\X.$ 
\

A pair $(\A,\B)$ of classes of objects in $\C$ is  a {\bf cotorsion pair} if 
$\A={}^{\perp_1}\B$  and $\B=\A^{\perp_1}.$ 
\

A cotorsion pair $(\A,\B)$ is {\bf complete} if  for any $C\in\C$ there are exact sequences 
$0\to B\to A\to C\to 0$ and $0\to C\to B'\to A'\to 0,$ for some $A,A'\in\A$ and $B,B'\in\B.$ Recall that the 
cotorsion pair $(\A,\B)$  is {\bf hereditary} if $\A$ is resolving and $\B$ is coresolving. In such a case, 
by the shifting argument, it can be seen that ${}^{\perp_1}\B={}^{\perp}\B:=\cap_{i\geq 1}{}^{\perp_i}\B$ and 
$\A^{\perp_1}=\A^{\perp}.$ In particular, $\Ext^i_\C(A,B)=0$ for any $i\geq 1,$ $A\in\A$ and $B\in\B.$

{\sc Exact categories.} Exact categories has become nowadays an important tool. In this paragraph, 
we introduce the necessary development for the paper. The reader can see, for example, in 
\cite{Buhler} and \cite{DRSS} and the  therein citations. 
\

We always assume that $\C$ is an additive category. A kernel-cokernel pair $(i,p)$ in $\C$ is a pair of composable morphisms
$A'\stackrel{i}{\to}A\stackrel{p}{\to}A''$ such that $i=\Ker(p)$ and $p=\Coker(i).$ An exact structure on $\C$ is a class $\mathcal{E}$ of kernel-cokernel pairs satisfying a list of six axioms 
(see \cite[Definition 2.1]{Buhler}). An {\bf exact category} is a pair $(\C,\mathcal{E}),$ where $\C$ is an additive 
category and $\mathcal{E}$ is an exact structure on $\C.$ An element $(i,p)\in\mathcal{E}$ is 
 usually called short $\E$-exact sequence and written as $A'\stackrel{i}{\rightarrowtail}A\stackrel{p}{\twoheadrightarrow}A'',$ in such a case $i$ is {\bf admissible monic} 
 and $p$ is {\bf admissible epic}. Sometimes, if it is clear from the context, we also write 
 $A'\stackrel{i}{\to}A\stackrel{p}{\to}A'',$ for the pair $(i,p)\in\E.$

 The philosophy of an exact structure, on a given additive category, is to define what will be the replacement for the short exact sequences, which are usually defined in abelian categories. Let $\C$ be an abelian category. Then, we have al least two 
 exact structures on $\C,$ namely: the class $\mathcal{\E}_{\min}$ of all split-exact sequences and the class $\mathcal{\E}_{\max}$ of all exact sequences. Furthermore, for any full additive subcategory 
 $\X\subseteq\C,$ closed under extensions, the class $\mathcal{E}_\X$ of all exact sequences in 
 $\C$ with terms in $\X$ is 
 also an exact structure on $\C.$ It can be shown (see \cite[Appendix A]{Keller}) that each skeletally 
 small exact category $(\C,\E)$ admits an equivalence $F:\C\to \X,$ with $\X$ a full extension closed subcategory of 
 an abelian category $\A,$ such that for any pair $(i,p)$ of composable morphisms in $\C,$ we have that 
 $(i,p)\in\E$ if and only if $0\to F(A')\xrightarrow{F(i)}F(A)\xrightarrow{F(p)}F(A'')\to 0$ is an exact sequence in $\A.$  
\

Let $r:B\to C$ and $s:C\to B$ be morphisms in $\C$ such that $rs=1_C.$ In this case, we say that $r$  is split-epi and $s$ is split-mono. The additive category $\C$ is {\bf weakly idempotent complete}  if any split-mono in $\C$ has a cokernel. Note that (see \cite[7.1]{Buhler}), $\C$ is weakly idempotent complete if an only if any split-epi in $\C$ has a kernel. This notion will be used in Theorem \ref{experp1.5}.

\begin{remark}\label{wic} \cite[Corollary 7.4, Proposition 7.5]{Buhler} For any exact category $(\C,\mathcal{E}),$ the following statements are equivalent.
\begin{itemize}
\item[(a)] $\C$ is weakly idempotent complete.
\item[(b)] Any split-mono is admissible monic.
\item[(c)] Any split-epi is admissible epic.
\item[(d)] Let $(f,g)$ be any pair of composable morphisms. If $gf$ is admissible epic then 
$g$ is admissible epic.
\end{itemize}
\end{remark}

An object $P$ in an exact category $(\C,\mathcal{E})$ is {\bf $\mathcal{E}$-projective} if the Hom functor $\Hom_\C(P,-):\C\to \Ab$ takes 
any short $\E$-exact sequence to a short exact sequence of abelian groups. The class of all 
$\mathcal{E}$-projective objects is denoted by $\Q(\E).$ It is said that $\C$ has {\bf enough $\mathcal{E}$-projectives} if for any $C\in\C$ there is an admissible epic $P\twoheadrightarrow C$ 
with $P\in\Q(\E).$ Note that, for $P=\oplus_{i\in I}P_i$ in $\C,$ we have that  $P\in\Q(\E)$ if and only if $P_i\in\Q(\E)$ for 
any $i\in I.$ In particular $\add\,\Q(\E)=\Q(\E).$ Dually, we have  the
{\bf $\mathcal{E}$-injectives}, the class $\I(\E)$ of the $\mathcal{E}$-injective objects and the 
the notion of {\bf enough $\mathcal{E}$-injectives} for $\C.$
\

The following statement is the Horseshoe's Lemma for exact categories.
\begin{lem}\label{horseshoe}
Let
$\xymatrix@R0.4cm@C=0.6cm{
& A' \ar[d]^{\beta_0} & & C'\ar[d]^{\beta_1}\\
 & P_0 \ar[d]^{\alpha_0}&  & P_1\ar[d]^{\alpha_1} \\
& A\ar[r]_f  & B\ar[r]_g & C 
 }$

\noindent be an $\E$-exact diagram in an exact category $(\C,\E)$ (i.e. all rows and columns are short $\E$-exact sequences) with $P_0, P_1\in \Q(\E)$. Then, there is an $\E$-exact and commutative diagram in $\C$
$$\xymatrix@R0.8cm@C=0.6cm{  
 A' \ar[r]  \ar[d]_{\beta_0} & B'\ar[r]\ar[d] & C'\ar[d]^{\beta_1} \\
 P_0 \ar[d]_{\alpha_0} \ar[r] & P_0\oplus P_1\ar[r]^{\qquad(0,1)\quad} \ar[d]^{\lambda} & P_1\ar[d]^{\alpha_1}\\
A\ar[r]_f  & B\ar[r]_g & C  }$$
\end{lem}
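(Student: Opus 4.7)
The plan is to build the middle column of the diagram first, and then extract the top row as its kernel. Since $P_1\in\Q(\E)$, the functor $\Hom_\C(P_1,-)$ is exact on short $\E$-exact sequences, so the admissible epic $g:B\twoheadrightarrow C$ allows us to lift $\alpha_1$: choose $\gamma:P_1\to B$ with $g\gamma=\alpha_1$. Using the biproduct $P_0\oplus P_1$, define the candidate middle vertical $\lambda:P_0\oplus P_1\to B$ by $\lambda\iota_0=f\alpha_0$ and $\lambda\iota_1=\gamma$, where $\iota_0,\iota_1$ are the biproduct injections. Then $g\lambda=\alpha_1\,(0,1)$ and $\lambda\iota_0=f\alpha_0$, which give the commutativity of the two lower squares.

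The heart of the argument is to verify that $\lambda$ is admissible epic, a fact which is not automatic in an exact category. The approach I would take is a pullback/splitting trick. Form the pullback
$$\xymatrix@R0.5cm@C0.8cm{Q\ar[r]^{p_2}\ar[d]_{p_1}& P_1\ar[d]^{\alpha_1}\\ B\ar[r]_g&C}$$
in $(\C,\E)$. By the axioms of an exact structure, the pullback of an admissible epic along any morphism is admissible epic, so $p_1,p_2$ are admissible epic with $\Ker(p_1)\cong C'$ and $\Ker(p_2)\cong A$. The maps $\gamma$ and $1_{P_1}$ together induce, by the universal property of the pullback, a morphism $s:P_1\to Q$ with $p_2 s=1_{P_1}$ and $p_1 s=\gamma$. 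Hence $s$ splits the short $\E$-exact sequence $A\rightarrowtail Q\twoheadrightarrow P_1$, identifying $Q\cong A\oplus P_1$, and under this identification $p_1$ becomes $(f,\gamma):A\oplus P_1\twoheadrightarrow B$. Then $\lambda$ factors as $P_0\oplus P_1\xrightarrow{\alpha_0\oplus 1_{P_1}}A\oplus P_1\xrightarrow{(f,\gamma)}B$: the first factor is a direct sum of the admissible epic $\alpha_0$ with an identity, hence admissible epic; the second is $p_1$, also admissible epic; and admissible epics are closed under composition, so $\lambda$ is admissible epic.

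To finish, set $B':=\Ker(\lambda)$ and apply the standard lemma for exact categories saying that the kernel of a composition of two admissible epics fits into a short $\E$-exact sequence assembled from the individual kernels. Applied to $\lambda=p_1\circ(\alpha_0\oplus 1_{P_1})$, this yields $\Ker(\alpha_0\oplus 1_{P_1})\rightarrowtail B'\twoheadrightarrow\Ker(p_1)$, which is the desired top row $A'\rightarrowtail B'\twoheadrightarrow C'$, since $\Ker(\alpha_0\oplus 1_{P_1})\cong A'$ and $\Ker(p_1)\cong C'$. The commutativity of the two upper squares is automatic: the map $A'\to B'$ is the factorization of $\iota_0\beta_0$ through $\Ker(\lambda)$ (using $\lambda\iota_0\beta_0=f\alpha_0\beta_0=0$), and the map $B'\to C'$ is the factorization of $B'\hookrightarrow P_0\oplus P_1\xrightarrow{(0,1)}P_1$ through $\Ker(\alpha_1)=C'$ (using that $\alpha_1\,(0,1)|_{B'}=g\lambda|_{B'}=0$). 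The main obstacle is exactly the admissibility of $\lambda$; the pullback device, fed by the lifting $\gamma$ coming from $P_1\in\Q(\E)$, is what converts $\lambda$ into a composite of manifestly admissible epics and makes the kernel analysis clean.
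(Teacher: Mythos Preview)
Your argument is correct. The paper itself does not give a proof; it simply cites \cite[Theorem 12.9]{Buhler}. Your construction---lift $\alpha_1$ through the admissible epic $g$ using $P_1\in\Q(\E)$, set $\lambda=(f\alpha_0,\gamma)$, and prove $\lambda$ is admissible epic by factoring it as $(f,\gamma)\circ(\alpha_0\oplus 1_{P_1})$ via the split pullback $Q\cong A\oplus P_1$---is exactly the standard route taken in B\"uhler's exposition, and the Noether-type lemma on kernels of composites of admissible epics then yields the top row $A'\rightarrowtail B'\twoheadrightarrow C'$ together with the required commutativity. So your proof fills in precisely what the reference does, with no gaps.
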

\begin{dem} See \cite[Theorem 12.9]{Buhler}.
\end{dem}

A chain complex $M_\bullet:\quad \cdots\rightarrow M_{n+1}\stackrel{d_{n+1}}{\rightarrow} M_{n}\stackrel{d_{n}}{\rightarrow} M_{n-1}\rightarrow  \cdots $ in $\C$, is said to be 
{\bf $\E$-acyclic} 
if each differential $d_n$ factors as $M_n\twoheadrightarrow Z_{n-1}(M_\bullet)\rightarrowtail M_{n-1}$ in such a way that each sequence $Z_n(M_\bullet)\rightarrowtail M_n\twoheadrightarrow Z_{n-1}(M_\bullet)$ is $\E$-exact.
\begin{defi} \cite{Buhler} An {\bf $\E$-projective resolution} of an object $C\in\C$ is an $\E$-acyclic complex
$$ \cdots \rightarrow P_n \rightarrow P_{n-1} \rightarrow \cdots \rightarrow P_1 \rightarrow P_0\stackrel{\varphi}{\twoheadrightarrow} C,$$
where $P_i\in\Q(\E)$ for any $i\in \N$. We 
 write this resolution as $P_{\bullet}(C)\stackrel{\varphi}{\twoheadrightarrow} C.$
\end{defi}
\begin{remark}\label{enoughP} If the exact category $(\C,\E)$ has enough $\E$-projectives, then 
any $C\in\C$ has an $\E$-projective resolution $P_{\bullet}(C)\stackrel{\varphi}{\twoheadrightarrow} C.$
\end{remark}

The following result is known as the comparison Lemma.

\begin{lem}\label{comparison} Let $P_{\bullet}(M)\stackrel{\varphi}{\twoheadrightarrow} M$ and 
$Q_{\bullet}(N)\stackrel{\eta}{\twoheadrightarrow} N$ be $\E$-projective resolutions. Then, 
for any morphism $f:M\to N$ in $\C$ there is a chain complex morphism $\overline{f}:P_{\bullet}(M)\to Q_{\bullet}(N)$ such that $f\varphi=\eta\overline{f}_0.$ Moreover this chain complex 
morphism is unique up to homotopy. 
\end{lem}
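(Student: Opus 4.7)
The plan is to build $\overline{f}$ level by level by a standard inductive lifting argument, adapted to the exact setting, using at each step the $\E$-projectivity of $P_n$ together with the admissible epics provided by the $\E$-acyclicity of $Q_\bullet(N)$. For uniqueness up to homotopy, I will apply essentially the same technique to the difference of two such lifts.

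For existence, I would start in degree zero: since $\eta:Q_0\twoheadrightarrow N$ is admissible epic and $P_0\in\Q(\E)$, the definition of $\E$-projective yields a morphism $\overline{f}_0:P_0\to Q_0$ with $\eta\overline{f}_0=f\varphi$. Assuming $\overline{f}_0,\dots,\overline{f}_n$ have already been constructed and commute with the differentials, the goal is to produce $\overline{f}_{n+1}$ extending the diagram one step further. Writing the differential of $Q_\bullet(N)$ as $Q_{n+1}\twoheadrightarrow Z_n(Q_\bullet)\rightarrowtail Q_n$, I first want to factor $\overline{f}_nd^P_{n+1}:P_{n+1}\to Q_n$ through the admissible monic $Z_n(Q_\bullet)\rightarrowtail Q_n$; then I can lift this factorization through the admissible epic $Q_{n+1}\twoheadrightarrow Z_n(Q_\bullet)$ by $\E$-projectivity of $P_{n+1}$ to obtain $\overline{f}_{n+1}$.

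The key observation that enables the factorization is that two consecutive differentials of an $\E$-acyclic complex compose to zero: this follows because the middle piece of the composite is $Z_n\rightarrowtail M_n\twoheadrightarrow Z_{n-1}$, an $\E$-exact sequence, whose composition is therefore zero. Granting this, $d^Q_n\,\overline{f}_nd^P_{n+1}=\overline{f}_{n-1}d^P_nd^P_{n+1}=0$, and since the admissible monic $Z_{n-1}(Q_\bullet)\rightarrowtail Q_{n-1}$ is in particular a monomorphism, the composite $P_{n+1}\to Q_n\twoheadrightarrow Z_{n-1}(Q_\bullet)$ must vanish as well. The universal property of the kernel $Z_n(Q_\bullet)\rightarrowtail Q_n$ then produces the desired factorization, and the lifting step closes the induction.

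For uniqueness, let $\overline{f}'$ be another such lift and set $h:=\overline{f}-\overline{f}'$, a chain map satisfying $\eta h_0=0$. I would construct $s_n:P_n\to Q_{n+1}$ with $h_n=d^Q_{n+1}s_n+s_{n-1}d^P_n$ (with $s_{-1}:=0$) by the same kind of induction: the base case uses $\eta h_0=0$ to factor $h_0$ through the kernel of $\eta$ (which is $Z_0(Q_\bullet)$) and then lifts via $\E$-projectivity of $P_0$; the inductive step applies the identity $d^Q_n(h_n-s_{n-1}d^P_n)=0$, derived from the induction hypothesis and the vanishing of consecutive differentials, to invoke exactly the same factorization-and-lift scheme used for existence. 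The main obstacle I foresee is not the structure of the induction, which is entirely formal, but rather verifying carefully that each factorization through a cycle object $Z_n$ is legitimate in the exact-categorical framework; in particular, everything rests on the fact that admissible monics are monomorphisms, which is what converts ``consecutive differentials compose to zero'' into the vanishing of the composite with the admissible epic $Q_n\twoheadrightarrow Z_{n-1}(Q_\bullet)$.
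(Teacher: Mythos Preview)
Your argument is correct and is precisely the standard inductive lifting-and-homotopy construction; the paper itself does not spell out a proof but simply refers to \cite[Theorem 12.5]{Buhler}, where exactly this argument is carried out. One minor remark: at the base of the inductive step for existence you should read ``$\overline{f}_{-1}$'' as $f$ and the relevant relation as $\eta\,\overline{f}_0\,d^P_1=f\,\varphi\,d^P_1=0$, but this is implicit in what you wrote.
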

\begin{dem} See \cite[Theorem 12.5]{Buhler}
\end{dem}
 
 Assume that the exact category $(\C,\E)$ has enough $\E$-projectives. Let $C$ be an object in $\C.$ Then, by  Lemma \ref{comparison} it can be proven that, for any object $B$ in the category $\C,$  the homology group 
 $H^n(\Hom(P_{\bullet}(C),B))$ does not depend on a chosen $\E$-projective resolution 
 $P_{\bullet}(C)\twoheadrightarrow C.$ Thus, 
 we can define the $\E$-relative $n$-extension group 
 $$\Ext_\E^{n}(C,B):=H^n(\Hom(P_{\bullet}(C),B)).$$
  Note that the Horseshoe's Lemma (see Lemma \ref{horseshoe}) implies the existence of 
 the long exact sequence in homology induced by the functor $\Hom(-,B).$ That is \cite[Remark 12.12]{Buhler}
  any $\E$-exact sequence $A'\rightarrowtail A\twoheadrightarrow A''$ gives a long exact sequence 
 $$\cdots\to\Ext^{i}_\E(A'',B)\to\Ext^{i}_\E(A,B)\to\Ext^{i}_\E(A',B)\to\Ext^{i+1}_\E(A'',B)\to\cdots,$$
 where $\Ext^{0}_\E(X,Y)=\Hom_\C(X,Y)$ for any $X,Y\in\C.$
 \
  
\begin{defi} For a given exact category $(\C,\E)$ with enough $\E$-projectives, we introduce the following dimensions for any $C\in\C.$
\begin{itemize}
\item[(a)] The {\bf $\E$-syzygy dimension} $\Omega_\E\mathrm{dim}(C)$ is the minimal non-negative integer $n$ such that there is an $\E$-projective resolution of $C$
 $$P_n \rightarrowtail P_{n-1} \rightarrow \cdots \rightarrow P_1 \rightarrow P_0 \twoheadrightarrow C.$$
\item[(b)] The {\bf $\E$-projective dimension}
$$\pd_\E(C):=\min\{n\in \N\ :\ \Ext_\E^{j}(C,Z)=0,\ \forall j>n,\ \forall Z\in \C\}.$$
\item[(c)] The {\bf $\E$-injective dimension}
$$\id_\E(C):=\min\{n\in \N\ :\ \Ext_\E^{j}(Z,C)=0,\ \forall j>n,\ \forall Z\in \C\}.$$
\end{itemize}
\end{defi}

\begin{lem}\label{primeraI} Let $(\C,\E)$ be an exact category with enough $\E$-projectives. Then, 
for any $C\in\C,$ we have that 
$$\Omega_\E\mathrm{dim}(C)=\pd_\E(C).$$
\end{lem}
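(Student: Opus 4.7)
The plan is to prove the two inequalities $\pd_\E(C) \leq \Omega_\E\mathrm{dim}(C)$ and $\Omega_\E\mathrm{dim}(C) \leq \pd_\E(C)$ separately. For the first one, suppose $n := \Omega_\E\mathrm{dim}(C) < \infty$, so that there is a finite $\E$-projective resolution
$$0\to P_n\rightarrowtail P_{n-1}\to\cdots\to P_0\twoheadrightarrow C\to 0.$$
By the comparison Lemma (Lemma \ref{comparison}), any such resolution can be used to compute $\Ext^{j}_\E(C,Z)$. Since the complex $\Hom(P_\bullet,Z)$ is concentrated in degrees $0,1,\ldots,n$, we get $\Ext^{j}_\E(C,Z)=0$ for every $j>n$ and every $Z\in\C$, showing $\pd_\E(C)\leq n$.

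For the reverse inequality, assume $n:=\pd_\E(C)<\infty$. By Remark \ref{enoughP}, pick some $\E$-projective resolution $P_\bullet(C)\twoheadrightarrow C$. Its $\E$-acyclicity lets us split it into short $\E$-exact sequences $K_{i+1}\rightarrowtail P_i\twoheadrightarrow K_i$, with $K_0=C$. By iterating the long exact sequence attached to the Horseshoe's Lemma (Lemma \ref{horseshoe}), as stated after that Lemma, we get the dimension shifting isomorphism
$$\Ext^{j}_\E(K_n,Z)\;\cong\;\Ext^{j+n}_\E(C,Z)\;=\;0\qquad\text{for all}\;j\geq 1,\;Z\in\C.$$
In particular $\Ext^{1}_\E(K_n,-)=0$. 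Applying this vanishing to the long exact sequence associated with an arbitrary short $\E$-exact sequence $A\rightarrowtail B\twoheadrightarrow D$ gives the short exact sequence of abelian groups
$$0\to\Hom_\C(K_n,A)\to\Hom_\C(K_n,B)\to\Hom_\C(K_n,D)\to 0,$$
so $\Hom_\C(K_n,-)$ preserves short $\E$-exact sequences, which is exactly the definition of $K_n\in\Q(\E)$. Setting $P_n:=K_n$, we obtain an $\E$-projective resolution $P_n\rightarrowtail P_{n-1}\to\cdots\to P_0\twoheadrightarrow C$ of length $n$, giving $\Omega_\E\mathrm{dim}(C)\leq n=\pd_\E(C)$.

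The two cases fit together: if either dimension is infinite, the contrapositive of the argument giving the opposite inequality forces the other one to also be infinite, so the equality $\Omega_\E\mathrm{dim}(C)=\pd_\E(C)$ holds in $\mathbb{N}\cup\{\infty\}$. The only subtle point, and the place where I expect most care is needed, is the implication ``$\Ext^{1}_\E(K_n,-)=0\Rightarrow K_n\in\Q(\E)$''. It is not enough to know that every $\E$-exact sequence ending in $K_n$ splits; one has to check that $\Hom_\C(K_n,-)$ sends every short $\E$-exact sequence to a short exact sequence of abelian groups, which is precisely what the Ext-long exact sequence delivers once we know $\Ext^1_\E(K_n,-)$ vanishes on every object.
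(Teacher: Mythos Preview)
Your proof is correct and follows essentially the same approach as the paper's: both use the long exact sequence for $\Ext_\E$ together with dimension shifting to show $\Omega_\E\mathrm{dim}(C)\leq n \Leftrightarrow \pd_\E(C)\leq n$. The paper's proof is only a one-line sketch, so your version simply fills in the details---in particular the verification that $\Ext^1_\E(K_n,-)=0$ forces $K_n\in\Q(\E)$, which is the step the paper leaves implicit.

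One minor remark on attribution: the long exact sequence you invoke at the very end (applying $\Hom_\C(K_n,-)$ to an arbitrary $\E$-exact sequence $A\rightarrowtail B\twoheadrightarrow D$) is the \emph{covariant} long exact sequence. The passage after Lemma~\ref{horseshoe} in the paper only records the contravariant one. The covariant version does hold, but for a different reason than Horseshoe: since each $P_i$ in a projective resolution of $K_n$ is $\E$-projective, $\Hom_\C(P_i,-)$ is exact on $\E$-exact sequences by definition, giving a short exact sequence of complexes $0\to\Hom(P_\bullet,A)\to\Hom(P_\bullet,B)\to\Hom(P_\bullet,D)\to 0$ and hence the desired long exact sequence in cohomology. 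This does not affect the validity of your argument, only the citation.
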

\begin{dem} We do as in the usual case. Using the 
long exact sequence given by the right derived functors $\Ext_\E^{j}(-,Z),$ for any $Z\in\C,$ and the shifting argument for $\E$-extensions groups, we can prove that $\Omega_\E\mathrm{dim}(C)\leq n$ if and only if $\pd_\E(C)\leq n.$
\end{dem}

For each class $\mathcal{Y}$ of objects in $\C$, we set 
$$\pd_\E(\Y):=\mathrm{sup}\{\pd_\E(Y)\ :\ Y\in \Y\}.$$
 Following \cite{AS2}, we introduce the  definition of the finitistic $\E$-projective dimension of a given class of objects in $\C.$

\begin{defi}
For each class $\mathcal{Y}$ of objects in $\C$, we set 
$$\mathcal{P}^{<\infty}_{\E}(\Y):=\{Y\in\Y\;:\;\pd_\E(Y)<\infty\}.$$
The {\bf finitistic $\E$-projective} dimension of $\Y$ is the following
$$\mathrm{fpd}_\E(\Y):= \pd_\E(\mathcal{P}^{<\infty}_{\E}(\Y)).$$
Similarly, we have the class $\I^{<\infty}_{\E}(\Y)$ and the {\bf finitistic $\E$-injective} dimension $\mathrm{fid}_\E(\Y).$
\end{defi}

Assume that $\C$ is abelian and $\E=\E_{\max}$ is the exact structure given by all the exact sequences in $\C.$  In this case, $\E_{\max}$-projective dimension is just the ordinary projective dimension, and we just write $\fpd\,(\Y)$  and call this number the finitistic projective dimension of the class $\Y.$  Similarly, we have the finitistic injective dimension $\fid\,(\Y)$ of $\Y.$
\vspace{0.2cm}

{\sc  Exact categories from abelian categories.} 

Let $\C$ be an abelian category. The class of all {\bf projective} objects in $\C$ is 
usually denoted by $\Q(\C).$ 

A {\bf generator} in an abelian category $\C$ is a class $\X$ of objects in $\C$ such that $\add\,(\X)=\X$ and 
$\Q(\C)\subseteq \X.$
\

In this subsection, we assume that $\C$ is an abelian category $\C.$ Some of the results given in \cite{AS1}, by M. Auslander and {\O}. Solberg, can be adapted (as can be seen in \cite{DRSS}) into our setting of the  abelian category $\C$  and a generator  precovering class $\X$ of objects in $\C.$ 
\

A class $\X$ of objects in $\C$ defines \cite[Proposition 1.7]{AS1} an {\bf additive subfunctor} $F:=F_{\X}$ of $\Ext^{1}_{\C}(-,-)$ as follows: For any pair 
$(C,A)$ of objects in $\C$, the abelian group $F(C,A)$ consists of all short exact sequences
$0\rightarrow A\rightarrow B\rightarrow C\rightarrow 0$ in $\C$ such that the induced map 
$\Hom_{\C}(-,B)|_\X \rightarrow \Hom_{\C}(-,C)|_\X $ is surjective. Such an exact sequence is called an {\bf $F$-exact sequence}. We denote by $\E_F$ the class of all $F$-exact sequences in $\C.$

\begin{pro}\label{projExcat} Let $\C$ be an abelian category with enough projectives, $\X$ be a 
generator precovering class in $\C,$ and let $F:=F_{\X}$ be the associated subfunctor 
of $\Ext^{1}_{\C}(-,-).$  Then, the pair $(\C,\E_F)$ is an exact category with enough $\E_F$-projectives and such that $\Q(\E_F)=\X.$
\end{pro}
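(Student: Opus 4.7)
The plan is to treat the three assertions of the proposition in sequence, reducing the first one to results already in the literature and then deducing the remaining two from the definition of $F_{\X}$ together with the hypotheses that $\X$ is a generator and precovering.

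First, I would verify that $\E_F$ is an exact structure on $\C$. The subfunctor $F=F_{\X}$ is additive by \cite[Proposition 1.7]{AS1}, and the key additional property one needs is that $F$ is closed under pullbacks along arbitrary morphisms into the rightmost term and pushouts along arbitrary morphisms out of the leftmost term. This closure is precisely what allows Auslander--Solberg's machinery (as adapted in \cite{DRSS}) to turn the class of $F$-exact sequences into an exact structure, so I would quote (or briefly check using a standard $3\times 3$-diagram argument) that $\E_{F_{\X}}$ verifies the six Quillen-type axioms in \cite[Definition 2.1]{Buhler}. The role of $\Q(\C)\subseteq\X$ enters here only to ensure that $\E_F$ contains every split-exact sequence and hence is nonempty on identities.

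Next, I would prove that $\C$ has enough $\E_F$-projectives and simultaneously identify $\Q(\E_F)$ with $\X$. Given $C\in\C$, pick an $\X$-precover $f:X\to C$; since $\Q(\C)\subseteq\X$ and $\C$ has enough projectives, composing a projective cover $P\twoheadrightarrow C$ with any $P\to X$ lifting it (which exists because $P\in\X$ and $f$ is an $\X$-precover) forces $f$ to be an epimorphism in $\C$. Let $K:=\Ker f$; the short exact sequence
$$0\to K\to X\xrightarrow{f} C\to 0$$
is $F$-exact, because $\Hom_\C(X',f)$ is surjective for every $X'\in\X$ exactly by the precover property of $f$. Thus every object of $\C$ admits an admissible $\E_F$-epic from an object of $\X$, and it suffices to show $\X\subseteq\Q(\E_F)$ to conclude that $\C$ has enough $\E_F$-projectives.

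For $\X\subseteq\Q(\E_F)$: if $0\to A\to B\to C\to 0$ is any sequence in $\E_F$ and $X\in\X$, then by definition of $F_{\X}$ the map $\Hom_\C(X,B)\to\Hom_\C(X,C)$ is surjective, which is exactly the $\E_F$-projectivity of $X$. For the reverse inclusion $\Q(\E_F)\subseteq\X$: given $P\in\Q(\E_F)$, apply the construction above to build an $\E_F$-exact sequence $0\to K\to X\to P\to 0$ with $X\in\X$; $\E_F$-projectivity of $P$ splits this sequence, so $P\mid X$, and since $\add(\X)=\X$ by the generator hypothesis, $P\in\X$. The main obstacle is really only the first paragraph, namely checking the exact-structure axioms for $\E_{F_{\X}}$; the rest of the argument is formal once the precover $f$ is shown to be epic, which is where the assumption $\Q(\C)\subseteq\X$ is used decisively.
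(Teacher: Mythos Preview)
Your proposal is correct and follows essentially the same route as the paper: cite \cite{DRSS} for the exact structure, use an $\X$-precover (epic because $\Q(\C)\subseteq\X$) to produce an $\E_F$-exact sequence onto any object, and then identify $\Q(\E_F)$ with $\X$ via the splitting argument and $\add(\X)=\X$. You spell out more detail than the paper does (in particular the reason the precover is epic and the verification $\X\subseteq\Q(\E_F)$), and one minor wording point: you should say ``an epimorphism $P\twoheadrightarrow C$ with $P$ projective'' rather than ``projective cover,'' since covers need not exist, but the argument is unaffected.
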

\begin{dem} By \cite[Corollary 1.6, Proposition 1.7]{DRSS}, it follows that the pair $(\C,\E_F)$ is an exact category. The 
conditions over the class $\X$ implies that, for any $C\in\C,$ there is an exact sequence 
$\eta_C:\quad 0\to K_C\to X_C\stackrel{\mu_C}\to C\to 0$ such that $\mu_C$ is 
an $\X$-precover of $C.$ In particular $\eta_C\in\E_F,$ and since $\X\subseteq\Q(\E_F),$ 
we conclude that the exact category has enough $\E_F$-projectives.
\

 Let $Q\in\Q(\E_F).$ Consider the $F$-exact sequence as above 
 $$\eta_Q:\quad 0\to K_Q\to X_Q\stackrel{\mu_Q}\to Q\to 0.$$ 
Since $Q$ is $\E_F$-projective, the sequence $\eta_Q$ splits and thus $Q\mid X_Q$. Therefore  $Q\in\X,$ because we know that $\add(\X)=\X.$
\end{dem}

For the exact category $(\C,\E_F),$ having enough $\E_F$-projectives and $F:=F_\X,$ we have: 
the $F$-relative $n$-extension group $\Ext^n_F(A,B):=\Ext^n_{\E_F}(A,B)$ (some times also written 
as $\Ext^n_\X(A,B)$); the $F$-projective dimension $\pd_F(M):=\pd_{\E_F}(M);$ the $F$-injective dimension $\id_F(M):=\id_{\E_F}(M);$ and finally the $\X$-syzygy dimension 
$\Omega_\X\mathrm{dim}(M):=\Omega_{\E_F}\mathrm{dim}(M).$ Moreover, we have 
$\mathcal{P}^{<\infty}_{F}(\Y):=\mathcal{P}^{<\infty}_{\E_F}(\Y)$ and the 
finitistic $F$-projective dimension $\mathrm{fpd}_F(\Y):=\fpd_{\E_F}(\Y).$ Similarly, we have the finitistic $F$-injective dimension 
$\mathrm{fid}_F(\Y):=\finid_{\E_F}(\Y).$
\

Assume that $\C:=\modu\,(\Lambda)$ is the category of finitely generated $\Lambda$-modules, for an artin algebra $\Lambda.$  In this case, the $F$-finitistic dimension $\findim_F(\Lambda)$ of the algebra $\Lambda$ is just $\fpd_F(\modu\,(\Lambda)).$ Finally, the $F$-global dimension 
$\gldim_F(\Lambda)$ of the algebra $\Lambda$ is just $\pd_F(\modu\,(\Lambda)).$ In case $\X=\proj\,(\Lambda):=\Q(\modu\,(\Lambda)),$ we get, 
 respectively, the finitistic dimension $\findim\,(\Lambda)$ and the global dimension $\gldim\,(\Lambda)$ of $\Lambda.$  

The following result will be very useful in the sequel.

\begin{pro}\label{igualdades} Let $\C$ be an abelian category with enough projectives, and $\X$ 
be a generator  precovering class in $\C.$ Then, for $F:=F_\X,$ the following statements 
hold true.
\begin{itemize}
\item[(a)] For any $M\in \C$, we have that 
$$\resdim_\X(M)\leq\Oxdim(M)=\pdF(M).$$
\item[(b)] Let $\X$ be closed under extensions and let  $\omega$ be an $\X$-injective relative cogenerator 
in $\X.$ Then
 \begin{itemize}
 \item[(b1)] $\resdim_\X(M)=\pdF(M)\quad\text{for any}\quad M\in\C,$
 \item[(b2)] $\fpd_F(\C)=\pd_F(\X^\wedge)=\id_{\X^\wedge}\,(\omega)\leq\id\,(\omega).$
 \end{itemize}
\end{itemize}
\end{pro}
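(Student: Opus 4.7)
For part (a), Lemma \ref{primeraI} applied to $\E=\E_F$ gives $\Oxdim(M)=\pdF(M).$ Since Proposition \ref{projExcat} yields $\Q(\E_F)=\X$ and any $\E_F$-acyclic complex is exact in the abelian category $\C,$ any $\E_F$-projective resolution of $M$ is in particular an $\X$-resolution of $M,$ whence $\resdim_\X(M)\le\Oxdim(M).$

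For (b1), the plan is to use the key identification: under the hypotheses, a total $\X$-resolution of $M$ is the same data as an $\E_F$-projective resolution of $M.$ After breaking a long exact sequence $0\to X_n\to\cdots\to X_0\to M\to 0$ (with $X_i\in\X$) into short exact sequences $0\to Y_{i+1}\to X_i\to Y_i\to 0$ with $Y_0=M,$ the condition that $\Hom_\C(X',-)$ keeps the original complex acyclic for every $X'\in\X$ is equivalent to each such short sequence being $F$-exact, and combined with $X_i\in\Q(\E_F)$ this is exactly the $\E_F$-projective resolution condition. Consequently $\pdF(M)=\Oxdim(M)\le\tresdim_\X(M),$ and Lemma \ref{tresdim}(b) identifies $\tresdim_\X(M)=\resdim_\X(M);$ together with (a) this proves (b1).

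For (b2), the equality $\fpd_F(\C)=\pd_F(\X^\wedge)$ is immediate from (b1), since $\pdF(M)<\infty$ iff $M\in\X^\wedge,$ so $\mathcal{P}^{<\infty}_F(\C)=\X^\wedge;$ the last inequality $\id_{\X^\wedge}(\omega)\le\id(\omega)$ is trivial because $\X^\wedge\subseteq\C.$ For the middle equality $\pd_F(\X^\wedge)=\id_{\X^\wedge}(\omega),$ the plan is to prove $\pdF(M)=\pd_\omega(M)$ for every $M\in\X^\wedge$ and then invoke Lemma \ref{Lcam} to obtain $\pd_\omega(\X^\wedge)=\id_{\X^\wedge}(\omega).$ The easy direction $\pd_\omega(M)\le\pdF(M)$ follows from the identity $\Ext^j_F(M,W)=\Ext^j_\C(M,W)$ for $W\in\omega$ and $M\in\C$: any $\E_F$-projective resolution of $M$ has terms in $\X,$ and since $\omega$ is $\X$-injective we have $\Ext^{\ge 1}_\C(X,W)=0$ for $X\in\X,$ so the standard dimension-shift argument computes both Ext-groups from the same resolution.

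The main obstacle is the reverse inequality $\pdF(M)\le\pd_\omega(M)$ for $M\in\X^\wedge,$ that is, exhibiting some $W\in\omega$ with $\Ext^n_F(M,W)\neq 0$ for $n:=\pdF(M)=\resdim_\X(M).$ The case $n=0$ is trivial; for $n\ge 1$ one iterates Lemma \ref{tresdim}(a) to build an explicit $\E_F$-projective resolution $0\to W_{n-1}\to W_{n-2}\to\cdots\to W_0\to X_0\to M\to 0$ whose leftmost $n-1$ terms lie in $\omega.$ Dimension shifting along this resolution identifies $\Ext^n_F(M,W_{n-1})$ with $\Ext^1_F(\Omega^{n-1}M,W_{n-1}),$ which contains the Yoneda class of the short exact sequence $0\to W_{n-1}\to W_{n-2}\to\Omega^{n-1}M\to 0.$ If this class were zero, the sequence would split, so $\Omega^{n-1}M$ would be a direct summand of the $\X$-object $W_{n-2}$; since $\add\,\X=\X,$ this forces $\Omega^{n-1}M\in\X,$ hence $\resdim_\X(M)\le n-1,$ contradicting $n=\resdim_\X(M).$ Therefore $\Ext^n_F(M,W_{n-1})\neq 0$ with $W_{n-1}\in\omega,$ which completes the argument.
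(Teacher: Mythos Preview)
Your argument is correct and, for parts (a) and (b1), coincides with the paper's proof essentially verbatim. For (b2) the paper takes a shorter route: it simply invokes \cite[Proposition~2.1]{ABu}, which states $\resdim_\X(M)=\pd_\omega(M)$ for all $M\in\X^\wedge$, and then applies Lemma~\ref{Lcam}. Combined with (b1) this immediately yields $\pd_F(\X^\wedge)=\pd_\omega(\X^\wedge)=\id_{\X^\wedge}(\omega)$. What you do instead is reprove that Auslander--Buchweitz identity directly via the explicit resolution and the non-splitting argument; this is a self-contained substitute for the citation and works fine.

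One imprecision worth fixing: the phrase ``iterate Lemma~\ref{tresdim}(a)'' does not literally produce a resolution whose leftmost terms lie in $\omega$; iterating that lemma only gives $\X$-precovers with terms in $\X$. The correct construction is to apply Lemma~\ref{tresdim}(a) \emph{once}, obtaining $0\to K_0\to X_0\to M\to 0$ with $\resdim_\omega(K_0)=n-1$, and then splice on an arbitrary $\omega$-resolution $0\to W_{n-1}\to\cdots\to W_0\to K_0\to 0$. A short dimension-shift (using $\Ext^{\ge 1}_\C(\X,\omega)=0$) shows every short exact piece of this $\omega$-resolution is $\E_F$-exact, so the spliced complex is an $\E_F$-projective resolution with the leftmost $n$ terms in $\omega$. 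With this adjustment your non-splitting argument goes through; note in particular that for $n=1$ the relevant sequence is $0\to W_0\to X_0\to M\to 0$ with $W_0=K_0\in\omega$, which your ``leftmost $n-1$'' count misses but the construction still provides.
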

\begin{dem} (a) By Proposition \ref{projExcat}, we know that $\Q(\E_F)=\X.$ Therefore any 
$\E_F$-projective resolution $P_{\bullet}(C)\twoheadrightarrow C$ is in particular an exact 
$\X$-resolution. Thus, we get $\resdim_\X(M)\leq \Oxdim(M).$ The equality $\Oxdim(M)=\pdF(M)$ follows from Lemma \ref{primeraI}. 
\

(b1) Note, firstly, that $\Oxdim(M)=\tresdim_\X(M).$ Since $\omega$ is an $\X$-injective relative cogenerator in $\X,$  from Lemma \ref{tresdim}, we get that  
that $\tresdim_\X(M)=\resdim_\X(M).$ Thus, (b1) follows from (a).  
\

(b2)  It follows from (b1),  \cite[Proposition 2.1]{ABu} and Lemma \ref{Lcam}.
\end{dem}

In a similar way as we did before, we assume now that the abelian category $\C$ has enough injectives. Denote by $\I\,(\C)$ the class of injective objects in $\C$. A {\bf cogenerator} in $\C$ is  a  class $\Y$ of objects in $\C$  such that 
$\add\,(\Y)=\Y$ and $\I\,(\C)\subseteq \Y.$ 
\

Fix now a preenveloping  a subclass $\Y$ of objects in $\C$. The class $\Y$ defines \cite{AS1} an {\bf additive subfunctor} $G:=F^{\Y}$ of $\Ext^{1}_{\C}(-,-)$ as follows: For any pair 
$(C,A)$ of objects in $\C$, the abelian group $G(C,A)$ consists of all short exact sequences
$0\rightarrow A\rightarrow B\rightarrow C\rightarrow 0$ in $\C$ such that the induced map 
$\Hom_{\C}(B,-)|_\Y \rightarrow \Hom_{\C}(A,-)|_\Y$ is surjective. Such an exact sequence is called a {\bf $G$-exact sequence} and $\E_G$ denotes the class of all $G$-exact sequences. 
Note that the pair $(\C,\E_G)$ is an exact category.  Furtheremore, if $\Y$ is  a preenveloping cogenerator, then $\C$ has enough $\E_G$-injectives and $I(\E_G)=\Y.$ 
In this case,  we have the right derived functor $\Ext^{i}_G(A,-)$ of 
$\Hom_{\C}(A,-),$ and such a functor is also denoted as $\Ext^{i}_\Y(A,-):=\Ext^{i}_G(A,-).$  
\vspace{0.2cm} 

{\sc The stable category modulo projectives.} 
Let $(\C,\E)$ be an exact category with enough $\E$-projectives. In this case, for any 
$C\in\C,$ there is an $\E$-exact sequence $K\rightarrowtail P\twoheadrightarrow C,$ with $P\in\Q(\E).$ The object $K$ 
is said to be an {\bf $\E$-syzygy}  of $C,$ and written also as $\Omega_{\E,P}(C)$ to encode the information in the above $\E$-exact sequence. Note that, in general, $\E$-syzygy objects are not 
uniquely determined, we could have another  $\E$-exact sequence $K'\rightarrowtail P'\twoheadrightarrow C,$ with $P'\in\Q(\E).$ So, there is no reason to get an isomorphism in $\C$ from 
$\Omega_{\E,P}(C)$ to $\Omega_{\E,P'}(C).$
\

The following is the Schanuel's Lemma for exact categories with enough projectives.

\begin{lem}\label{schanuelL} Let $C\in\C.$ Then, for any two $\E$-syzygies of $C,$ we have
$$\Omega_{\E,P}(C)\oplus P'=\Omega_{\E,P'}(C)\oplus P.$$
\end{lem}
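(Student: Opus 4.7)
The plan is to imitate the classical proof of Schanuel's Lemma, using the pullback construction now available in the exact category setting. Write the two $\E$-syzygies via $\E$-exact sequences
$$K\stackrel{i}{\rightarrowtail} P\stackrel{p}{\twoheadrightarrow} C\quad\text{and}\quad K'\stackrel{i'}{\rightarrowtail} P'\stackrel{p'}{\twoheadrightarrow} C,$$
with $K=\Omega_{\E,P}(C)$, $K'=\Omega_{\E,P'}(C)$, and $P,P'\in\Q(\E)$. The first step is to form the pullback of the admissible epics $p$ and $p'$; by the exact category axioms (see \cite[Proposition 2.12]{Buhler}), this pullback exists and yields a commutative square
$$\xymatrix@R0.6cm@C=0.6cm{ Q\ar@{->>}[r]^{q'}\ar@{->>}[d]_{q} & P'\ar@{->>}[d]^{p'}\\ P\ar@{->>}[r]_{p} & C }$$
in which both $q$ and $q'$ are admissible epics, so the sequences $\Ker(q)\rightarrowtail Q\twoheadrightarrow P$ and $\Ker(q')\rightarrowtail Q\twoheadrightarrow P'$ are $\E$-exact.

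Next I would identify these kernels. Standard pullback arguments, which are valid in exact categories, show that $\Ker(q)\simeq \Ker(p')=K'$ and $\Ker(q')\simeq \Ker(p)=K$. Thus we obtain two $\E$-exact sequences
$$K'\rightarrowtail Q\twoheadrightarrow P\quad\text{and}\quad K\rightarrowtail Q\twoheadrightarrow P'.$$

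Finally, I exploit the $\E$-projectivity of $P$ and $P'$. Since $P\in\Q(\E)$, the admissible epic $Q\twoheadrightarrow P$ splits, so $Q\simeq K'\oplus P$. Symmetrically, since $P'\in\Q(\E)$, the admissible epic $Q\twoheadrightarrow P'$ splits, giving $Q\simeq K\oplus P'$. Comparing the two decompositions of $Q$ delivers the desired isomorphism $\Omega_{\E,P}(C)\oplus P'\simeq \Omega_{\E,P'}(C)\oplus P$.

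The main obstacle, and the only genuinely non-trivial point, is the existence of the pullback of admissible epics in $(\C,\E)$ together with the identification of the kernels of the projections; this is precisely one of the fundamental consequences of Quillen's exact structure axioms, and once it is invoked the rest of the argument reduces to the same splitting manipulation as in the abelian case.
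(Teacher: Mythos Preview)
Your proof is correct and is essentially the same approach as the paper's: both form a pullback over $C$ (invoking \cite[Proposition 2.12]{Buhler}) and then split using $\E$-projectivity. The only cosmetic difference is that the paper first uses projectivity of $P$ to lift $1_C$ to a morphism of $\E$-exact sequences and then applies \cite[Proposition 2.12]{Buhler} to obtain the single $\E$-exact sequence $\Omega_{\E,P}(C)\rightarrowtail\Omega_{\E,P'}(C)\oplus P\twoheadrightarrow P'$, which it splits via projectivity of $P'$, whereas you form the symmetric pullback $Q$ first and split both projections.
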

\begin{dem} By taking the $\E$-exact sequences, defining these $\E$-syzygies, and using that $P\in\Q(\E),$ we get the following $\E$-exact and commutative diagram
$$\xymatrix@1{  \Omega_{\E,P}(C) \ar[r]\ar[d]  & P \ar[r]\ar[d] & C \ar@{=}[d]\\
                \Omega_{\E,P'}(C)\ar[r]  & P' \ar[r] & C. }$$
Hence, by \cite[Proposition 2.12]{Buhler}, we get the $\E$-exact sequence 
$$\Omega_{\E,P}(C)\rightarrowtail\Omega_{\E,P'}(C)\oplus P\twoheadrightarrow P',$$
which splits since $P'\in\Q(\E).$
\end{dem}

For any $M,N\in \C$, we denote by $\Fact_{\Q(\E)}(M,N)$ the set of all morphisms 
$f: M \rightarrow N$ factoring throughout an object in 
$\Q(\E)$. Let $N\in\C$ and take any admissible epic $\mu:P\twoheadrightarrow N$ with $P\in\Q(\E).$ Note that, for any $M\in \C$ we have that 
$$\Fact_{\Q(\E)}(M,N)=\Ima\,(\Hom_{\C}(M,\mu)).$$
Therefore the class of morphisms $\Fact_{\Q(\E)}$ is an ideal in $\C$, and so, we get the well 
known {\bf stable category} $\underline{\C}:=\C/\Fact_{\Q(\E)}$ {\bf modulo projectives}. Recall that $\underline{\C}$ and $\C$ have the same objects and 
$\uHom(M,N):=\Hom_{\C}(M,N)/\mathrm{Fact}_{\Q(\E)}(M,N).$
\

In what follows, we present some basic properties of the stable category $\uC.$ They are well-known, and for the convenience of the
reader, we include some proofs.
\begin{lem}\label{Primerlema}
For any $M\in \C$, the following statements hold true.
\begin{itemize}
\item[(a)]
$\uEnd(M)=0$ if and only if $M\in \Q(\E).$
\item[(b)] Let $\End_\C(M)$ be a local ring. Then,   $M\not\in \Q(\E)$ if  and only  if $\Fact_{\Q(\E)}(M,M)$ is contained in the 
radical  of $\End_{\C}(M).$
\end{itemize}
\end{lem}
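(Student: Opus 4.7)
The plan is to exploit two simple observations: first, the characterisation $\Fact_{\Q(\E)}(M,N)=\Ima(\Hom_\C(M,\mu))$ already recorded above (so morphisms in the ideal are exactly those of the form $hg$ with middle object $\E$-projective); and second, the fact that a retract of an $\E$-projective object is itself $\E$-projective. The latter follows directly from the lifting property: if $g\colon M\to P$ and $h\colon P\to M$ satisfy $hg=1_M$ with $P\in\Q(\E)$, then given any admissible epic $\pi\colon X\twoheadrightarrow Y$ and any morphism $f\colon M\to Y$, the composite $fh\colon P\to Y$ lifts through $\pi$ to some $k\colon P\to X$, and $kg\colon M\to X$ satisfies $\pi(kg)=fhg=f$, so $M\in\Q(\E)$.

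For part (a), one direction is immediate: if $M\in\Q(\E)$ then $1_M=1_M\circ 1_M$ factors through $M$ itself, hence every endomorphism factors through an object of $\Q(\E)$, giving $\uEnd(M)=0$. Conversely, if $\uEnd(M)=0$ then $1_M\in\Fact_{\Q(\E)}(M,M)$, so we may write $1_M=hg$ with $P\in\Q(\E)$ in the middle. By the retract observation above, $M\in\Q(\E)$.

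For part (b), the easy direction assumes $\Fact_{\Q(\E)}(M,M)\subseteq\rad(\End_\C(M))$; then $1_M$, being a unit, is not in the radical, hence not in $\Fact_{\Q(\E)}(M,M)$, so $\uEnd(M)\neq 0$ and by part (a) $M\notin\Q(\E)$. For the converse, assume $M\notin\Q(\E)$. Because $\End_\C(M)$ is local, its radical coincides with the set of non-units, so it suffices to show every $f\in\Fact_{\Q(\E)}(M,M)$ is a non-unit. Write $f=hg$ with $g\colon M\to P$, $h\colon P\to M$, $P\in\Q(\E)$. If $f$ were invertible, then $(f^{-1}h)g=1_M$, so $M$ would be a retract of $P\in\Q(\E)$; by the retract argument this gives $M\in\Q(\E)$, contradicting the hypothesis. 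Hence $f$ is a non-unit, so $f\in\rad(\End_\C(M))$.

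I do not anticipate any real obstacle: the whole argument rests on the single observation that retracts of $\E$-projectives are $\E$-projective, and the rest is bookkeeping with the local structure of $\End_\C(M)$. The only mild point of care is that nothing here requires idempotent completeness of $\C$, since the lifting property is checked directly against $f$ and $g$ rather than through the splitting of the idempotent $gh$.
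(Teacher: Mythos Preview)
Your proof is correct and follows essentially the same approach as the paper's. The only differences are cosmetic: the paper works with a fixed admissible epic $\mu\colon P\twoheadrightarrow M$ (using enough $\E$-projectives) and concludes that $\mu$ splits, whereas you factor through an arbitrary $\E$-projective and verify the retract-of-projective-is-projective property directly; you also spell out the reverse implications in (a) and (b), which the paper leaves implicit.
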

\begin{dem} Let $\mu:P\twoheadrightarrow M$ be an admissible epic with $P\in\Q(\E).$
\begin{itemize}
\item[(a)] Assume that $\uEnd(M)=0$. Then 
$$\End_{\C}(M)=\mathrm{Fact}_{\Q(\E)}(M,M) =\Ima(\Hom_{\C}(M,\mu))$$ and hence $1_M$ factors throughout $\mu$. Therefore $\mu:P\twoheadrightarrow M$ splits, proving that $M\in\Q(\E).$
\

\item[(b)] Suppose that $M\not\in \Q(\E).$ Let $\alpha\in$ Fact$_{\Q(\E)}(M,M)=\Ima(\Hom_{\C}(M,\mu))$. That is, the following 
diagram is commutative: $$\xymatrix@1{  & M \ar[d]^{\alpha} \ar@{.>}[dl]_{\alpha '}\\
P \ar[r]^{{\mu}} & M. }$$

\noindent Now, if $\alpha\not\in\rad(\End_{\C}(M))$, since $\End_{\C}(M)$ is local, we get that $\alpha:M\rightarrow M$ is an isomorphism. But $\alpha = \mu\alpha '$, implies that $\mu$ splits and then $M\in \Q(\E),$ a contradiction. Hence Fact$_{\Q(\E)}(M,M)\subseteq\rad(\End_{\C}(M))$.
\end{itemize}
\end{dem}

Let $\A$ be an additive category. We denote by $\ind\,(\A)$ the full subcategory of  $\A,$ 
whose objects are determined by choosing  one object for each iso-class of  indecomposable objects in $\A.$ We also say
that $\A$ is a Krull-Schmidt category (from now {\bf KS-category}) if 
each object in $\mathcal{A}$  decomposes into a
 finite direct sum  of objects having local endomorphism rings. In particular,  the endomorphism ring of every object is semi-perfect (see, for example, in \cite{CYZ, Krause}).

\begin{pro}\label{indenelcociente} Let $(\C,\E)$ be any exact category with enough $\E$-projectives. Then, the following statements hold true.
\begin{itemize}
\item[(a)] $\Q(\E)$ coincides with the class of all zero objects in $\uC.$
\item[(b)] For any $C\in\C,$ we have canonical isomorphism 
$$\Omega_{\E,P}(C)\simeq\Omega_{\E,P'}(C)$$ in the stable category $\uC.$
\item[(c)] If $\C$ is Krull-Schmidt, then the stable category $\uC$ is also Krull-Schmidt. Moreover $\ind\,(\uC)=\ind\,(\C)-\Q(\E)$.
\end{itemize}
\end{pro}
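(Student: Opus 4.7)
The plan is that each part follows directly from results already in the section. For (a), observe that an object $M$ is a zero object in $\uC$ exactly when $1_M=0$ in $\uEnd(M)$, that is, $\uEnd(M)=0$. By Lemma \ref{Primerlema}(a) this is equivalent to $M\in\Q(\E)$, so (a) reduces to a single line.

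For (b), I would invoke Schanuel's Lemma (Lemma \ref{schanuelL}), which produces an isomorphism $\Omega_{\E,P}(C)\oplus P'\cong\Omega_{\E,P'}(C)\oplus P$ in $\C$, and which descends to $\uC$. Applying (a) twice, $P\cong 0\cong P'$ in $\uC$, so both direct sums collapse, yielding the canonical isomorphism $\Omega_{\E,P}(C)\cong\Omega_{\E,P'}(C)$ in $\uC$.

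For (c), the strategy is to transport a Krull--Schmidt decomposition of $C\in\C$ through the quotient functor $\C\to\uC$. Fix a decomposition $C=\bigoplus_{i=1}^{n}C_i$ with each $\End_\C(C_i)$ local. Those summands with $C_i\in\Q(\E)$ become zero in $\uC$ by (a) and can be discarded. For $C_i\notin\Q(\E)$, Lemma \ref{Primerlema}(b) gives $\Fact_{\Q(\E)}(C_i,C_i)\subseteq\rad\End_\C(C_i)$, so $\uEnd(C_i)=\End_\C(C_i)/\Fact_{\Q(\E)}(C_i,C_i)$ is the quotient of a local ring by a subideal of its radical, hence again local. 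This exhibits the desired Krull--Schmidt decomposition of $C$ in $\uC$.

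For the equality $\ind(\uC)=\ind(\C)\setminus\Q(\E)$, the inclusion $\supseteq$ is immediate from the preceding paragraph: any indecomposable of $\C$ lying outside $\Q(\E)$ is non-zero in $\uC$ with local $\uEnd$. For $\subseteq$, a non-zero indecomposable $X\in\uC$ admits, by the above, a decomposition in $\C$ with exactly one summand outside $\Q(\E)$; identifying $X$ with that summand places it in $\ind(\C)\setminus\Q(\E)$. The main technical point to scrutinise is that two objects $X,Y\in\ind(\C)\setminus\Q(\E)$ which become isomorphic in $\uC$ must already be isomorphic in $\C$, for otherwise the sets of iso-class representatives would not coincide: given mutually inverse $\underline{f},\underline{g}$ in $\uC$, the differences $gf-1_X$ and $fg-1_Y$ lie in $\Fact_{\Q(\E)}$ and hence, by Lemma \ref{Primerlema}(b), inside the Jacobson radicals of the local rings $\End_\C(X)$ and $\End_\C(Y)$; this forces $gf$ and $fg$ to be units, so $f$ is in fact an isomorphism in $\C$. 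This last compatibility is the only delicate step; everything else is essentially unwinding the definition of $\uC$ together with the two basic Lemmas \ref{Primerlema} and \ref{schanuelL}.
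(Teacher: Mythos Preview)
Your proof is correct and follows essentially the same approach as the paper: part (a) via Lemma~\ref{Primerlema}(a), part (b) via Schanuel's Lemma combined with (a), and part (c) by transporting a Krull--Schmidt decomposition through the quotient functor and using Lemma~\ref{Primerlema}(b) to see that $\uEnd(C_i)$ is local when $C_i\notin\Q(\E)$. Your final paragraph, verifying that non-projective indecomposables which become isomorphic in $\uC$ were already isomorphic in $\C$, is a point the paper's proof leaves implicit; your argument for it is sound and is exactly what is needed to justify the equality $\ind(\uC)=\ind(\C)\setminus\Q(\E)$ at the level of iso-class representatives.
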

\begin{dem} (a) It follows from Lemma \ref{Primerlema} (a).
\

(b) Since $\Q(\E)$ coincides with the class of all zero objects in $\uC,$ then Schanuel's Lemma (see 
Lemma \ref{schanuelL}) gives us (b).
\

(c) We have an additive functor $\pi:  \C\rightarrow \uC$, given by 
$(M\stackrel{\alpha}{\rightarrow} N)
\mapsto (M\stackrel{[\alpha]}{\rightarrow} N)$, where $[\alpha]:=\alpha +\mathrm{Fact}_{\Q(\E)}(M,N)$. Let $M\in \C$. Since $\C$ is a 
KS-category, it follows that $M=\oplus_{i=1}^{n} M_i$, with $\End_{\C}(M_i)$ local for all $i=1,\cdots,n$. Since 
$\pi:  \C\rightarrow \uC$ is an additive functor, the same decomposition  $M=\oplus_{i=1}^{n} M_i$ holds in $\uC,$ but
by (a), the direct summand $M_i$ is zero in $\uC$ if and only if $M_i\in \Q(\E).$ Hence, without lost of 
generality, we may assume that  $M=\oplus_{i=1}^{n} M_i$, with $M_i\not\in\Q(\E)$ for all $i$. Then, by Lemma \ref{Primerlema} (b), 
we get $$\frac{\uEnd (M_i)}{\rad(\uEnd(M_i))}=
\frac{\End_{\C}(M_i) / \mathrm{Fact}_{\Q(\E)}(M_i,M_i)}{\rad(\End_{\C}(M_i))/\mathrm{Fact}_{\Q(\E)}(M_i,M_i)}\cong \frac{\End_{\C}(M_i)}{\rad(\End_{\C}(M_i))},$$
and hence the ring $\uEnd(M_i)$ is local for any $i=1,\cdots,n$.
\end{dem}

Let $f:M\rightarrow M'$ be a morphism in $\C$. Note that, the morphism $f$ can be lifted to  morphisms $f_0$ and $f_1$ such that the following $\E$-exact diagram in $\C$ commutes
$$\xymatrix@1{
\ar[d]^{f_1}  \Omega_{\E,P}(M) \ar[r]^{i} & \ar[d]^{f_0}  P \ar[r]^{\qquad\mu\quad} & M \ar[d]^{f}\\
\Omega_{\E,P'}(M') \ar[r]^{i'} & P' \ar[r]^{\qquad\mu'\quad} & M'.}$$
By using the lifted morphism $f_1,$ associated with $f,$ we can define the following functor.

\begin{pro}\label{syzygyaditivo}
The correspondence $\Omega_{\E}: \uC \rightarrow \uC$ given by 
$$(M \stackrel{[f]}{\rightarrow} M')\mapsto (\Omega_{\E,P}(M)\stackrel{[f_1]}{\rightarrow} \Omega_{\E,P'}(M')),$$  where $[f_1]:=f_1 +\mathrm{Fact}_{\Q(\E)}$ 
is a well defined additive functor.
\end{pro}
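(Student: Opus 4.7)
The plan is to prove well-definedness of the correspondence on both objects and morphisms, and then verify functoriality and additivity. First, for each object $M\in\C$, fix once and for all a chosen admissible epic $\mu_M:P_M\twoheadrightarrow M$ with $P_M\in\Q(\E)$ (possible by the enough $\E$-projectives assumption), and set $\Omega_{\E}(M):=\Omega_{\E,P_M}(M)$ together with the associated admissible monic $i_M:\Omega_{\E}(M)\rightarrowtail P_M$. On morphisms, given $f:M\to M'$, the comparison lemma (Lemma \ref{comparison}) yields a lift $f_0:P_M\to P_{M'}$ with $\mu_{M'}f_0=f\mu_M$, and then $f_0\,i_M$ factors uniquely as $i_{M'}f_1$ since $\mu_{M'}f_0\,i_M=f\mu_M i_M=0$ and $i_{M'}$ is the kernel of $\mu_{M'}$. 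The assignment $[f]\mapsto[f_1]$ is what we must prove is independent of choices.

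For independence of the choice of lift, suppose $f_0,f_0'$ both lift $f$. Then $\mu_{M'}(f_0-f_0')=0$, so $f_0-f_0'$ factors through $i_{M'}$, giving $h:P_M\to\Omega_\E(M')$ with $i_{M'}h=f_0-f_0'$. From $i_{M'}(f_1-f_1')=(f_0-f_0')\,i_M=i_{M'}h\,i_M$ and the fact that $i_{M'}$ is an admissible monic (hence mono), we conclude $f_1-f_1'=h\,i_M$, which factors through $P_M\in\Q(\E)$. Hence $[f_1]=[f_1']$ in $\uC$.

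For independence of the representative of $[f]$, suppose $f\in\mathrm{Fact}_{\Q(\E)}(M,M')$, so $f=\beta\alpha$ with $\alpha:M\to Q$, $\beta:Q\to M'$, and $Q\in\Q(\E)$. Since $Q$ is $\E$-projective and $\mu_{M'}$ is admissible epic, there exists $\beta_0:Q\to P_{M'}$ with $\mu_{M'}\beta_0=\beta$. Set $f_0:=\beta_0\alpha\mu_M:P_M\to P_{M'}$; then $\mu_{M'}f_0=\beta\alpha\mu_M=f\mu_M$, so $f_0$ is an admissible lift. But $f_0\,i_M=\beta_0\alpha\mu_M i_M=0$, so the induced $f_1$ can be chosen to be $0$, whence $[f_1]=0$. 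Combined with the previous paragraph, this shows that $[f_1]$ depends only on $[f]$, and so $\Omega_\E$ is well-defined on morphisms of $\uC$.

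Finally, functoriality and additivity are straightforward: the identity lifts as $1_{P_M}$, inducing $1_{\Omega_\E(M)}$; for composable $f:M\to M'$ and $g:M'\to M''$, the product $g_0f_0$ is a lift of $gf$, so (using independence of lifts) $\Omega_\E([g][f])=[g_1f_1]=\Omega_\E([g])\Omega_\E([f])$; and for $f,g:M\to M'$ with lifts $f_0,g_0$, the sum $f_0+g_0$ lifts $f+g$ and induces $f_1+g_1$ on syzygies, giving additivity. The main obstacle is the morphism well-definedness, which hinges on two separate arguments: one invoking the kernel property of $i_{M'}$ to absorb different lifts, and one using the $\E$-projectivity of $Q$ to trivialize morphisms factoring through $\Q(\E)$; both rely crucially on the enough-projectives hypothesis and the universal properties encoded in the axioms of an exact category.
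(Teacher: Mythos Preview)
Your proof is correct and follows essentially the same approach as the paper: the key step---showing that two lifts $(f_0,f_1)$ and $(f_0',f_1')$ of the same $f$ yield $f_1-f_1'\in\mathrm{Fact}_{\Q(\E)}$ via a factorization through $P_M$---is identical to the paper's argument. You are more thorough than the paper, which only treats independence of the lift explicitly and dismisses the remaining checks (independence of the representative of $[f]$, functoriality, additivity) with ``It can be seen that such a correspondence gives an additive functor''; your explicit handling of the case $f\in\mathrm{Fact}_{\Q(\E)}$ and the routine verifications is a welcome addition. One very minor remark: invoking the comparison lemma to produce $f_0$ is overkill, since the lift follows immediately from $P_M\in\Q(\E)$ and $\mu_{M'}$ being admissible epic.
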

\begin{dem}
Let $f_1'$ and $f_0'$ be another lifted morphisms of $f.$ Then we have the following commutative diagram
$$\xymatrix@1{
\ar[d]^{g_1}  \Omega_{\E,P}(M) \ar[r]^{{\quad}i} & \ar[d]^{g_0}  P \ar[r]^{\qquad\mu\quad} & M \ar[d]^{0}\\
\Omega_{\E,P'}(M') \ar[r]^{{\quad}i'} & P' \ar[r]^{\qquad\mu'\quad} & M',}$$
where $g_i:=f_i-f'_i$, for $i=0,1$. Then, there exists $t: P\rightarrow \Omega_{\E,P'}(M')$ such that $i'\circ t=g_0$. Now
$i'\circ t\circ i =g_0\circ i=i'\circ g_1$ and since $i'$ is an admissible monic, we get that 
$g_1=t\circ i$. Therefore $f_1-f'_1=t\circ i\in\Fact_{\Q(\E)}$ and hence 
$\Omega_{\E}([f])= [f_1]$ is well defined. It can be seen that such a correspondence gives an additive functor.
\end{dem}
\medskip

\section{The relative Igusa-Todorov functions $\Phi_\E$ and $\Psi_\E$}

In this section, we develop the general theory of $\E$-IT functions defined over an exact $ IT$-context 
$(\C,\E).$ In order to do that, we start with some general notions.

We say that an additive category $\C$ satisfies the {\bf Fitting's property} if for any $X\in\C$ and $f\in\End_\C(X)$ there exists a decomposition $X=Y\oplus Z$ such that 
$f=\begin{pmatrix}1 & 0\\0 &\alpha\end{pmatrix}$ 
and $\alpha:Z\to Z$ is nilpotent. 
\

An abelian category $\C$ satisfies the {\bf bi-chain condition} if 
for any sequence of morphisms $X_n\xrightarrow{\alpha_n}X_{n+1}\xrightarrow{\beta_n}X_n,$ with $n\geq 0,$ $\alpha_n$ a monomorphism and $\beta_n$ an epimorphism (for any $n$) there exists $n_0$ such that 
$\alpha_n$ and $\beta_n$ are isomorphisms for any $n\geq n_0.$
\

Let $k$ be a commutative ring. A $k$-category $\C$  is {\bf Hom-finite} if the $k$-module $\Hom_\C(A,B)$ is of finite length for any $A,B\in\C.$ 
\

A category $\C$ is said to be a {\bf{length-category}}  if $\C$ is abelian, skeletally small and every object of $\C$ has a 
finite composition series. It can be seen, that any  length-category is a KS-category \cite{G}.

\begin{defi}\label{IT-context} An {\bf exact $ IT$-context} is an exact skeletally small Krull-Schmidt category $(\C,\E),$ with enough 
$\E$-projectives. Moreover, if $\C$ is abelian and $\E$ is the class of all exact sequences, we say that $\C$ is an {\bf abelian $ IT$-context}. An exact (abelian) $ IT$-context is said to be {\bf strong} if $\C$ satisfies the Fitting's property.
\end{defi}

An {\bf exact $\mathrm{IT}^{op}$-context} is an exact category $(\C,\E)$ such that the opposite 
exact category $(\C^{op},\E^{op})$ is an exact $ IT$-context. That is, $(\C,\E)$ is an exact skeletally small Krull-Schmidt category with enough 
$\E$-injectives. Note that any result obtained for exact $ IT$-contexts can be translated for exact $\mathrm{IT}^{op}$-contexts.

The following remark is rather useful to prove that some categories are  strong abelian $ IT$-contexts. The reader can provide a proof of this fact by using Section 5 in \cite{Krause}.

\begin{remark} Let $\C$ be a skeletally small abelian category with enough projectives. If
any of the following conditions hold, then $\C$ is a strong abelian $ IT$-context.
\begin{itemize}
\item[(a)] $\C$ is a Hom-finite $k$-category.
\item[(b)] $\C$ is a length-category.
\item[(c)] $\C$ satisfies bi-chain condition.
\end{itemize}
\end{remark}

From an abelian $ IT$-context, using Proposition \ref{projExcat}, we can  produce an exact $ IT$-context as follows.

\begin{remark}\label{XITCon} Let $\C$ be an abelian $ IT$-context. If $\X\subseteq\C$ is a precovering generator class in $\C,$ 
then the pair $(\C,\E_F),$ for $F:=F_\X,$ is an exact $ IT$-context.
\end{remark}

Let $\C$ be an abelian $\mathrm{IT}^{op}$-context and $\Y\subseteq\C$ be a preenveloping 
cogenerator in $\C.$ In this case, for $G:=F^{\Y},$ the pair $(\C,\E_G)$ is an exact $\mathrm{IT}^{op}$-context.

In this section, we fix an exact $ IT$-context  $(\C,\E).$ Generalising K. Igusa and G. Todorov in \cite{IT}, we introduce and develop the notion of the relative Igusa-Todorov functions $\Phi_{\E}$ and $\Psi_{\E}$ attached to the exact $ IT$-context  $(\C,\E).$ In the case of the $ IT$-context $(\C,\E_F)$ 
for some $F:=F_\X,$  we set $\Phi_{\X}:=\Phi_{\E_F}$ and $\Psi_{\X}:=\Psi_{\E_F}$ and call them $\X$-IT functions. 
\

Let $(\C,\E)$ be an exact $\mathrm{IT}^{op}$-context. In this case, the corresponding Igusa-Todorov functions will be denoted as  $\Phi^{\E}$ and $\Psi^{\E}.$ If $\E=\E_G,$ 
for some $G=F^\Y,$ we just write $\Phi^{\Y}$ and $\Psi^{\Y}.$

\begin{defi} Let $(\C,\E)$ be an exact $ IT$-context. We denote by $\K_{\E}(\C)$  the quotient of the free abelian group generated by the set of iso-classes $\{[M]\ :\ M\in\C\}$
modulo the relations:
\begin{itemize}
\item[(i)]
$[N]-[S]-[T]$ if $N\cong S\oplus T$, and
\item[(ii)]
$[P]$ if $P\in \Q(\E).$
\end{itemize}
\end{defi}

By  Proposition \ref{indenelcociente}, we have that $\K_{\E}(\C)$ is the free abelian group generated by the objects in 
the set  $\ind\,(\C)-\Q(\E)$. Furthermore, by Proposition \ref{syzygyaditivo}, the additive functor $\Omega_{\E}: \uC\rightarrow \uC$  gives rise 
 to a morphism $\Omega_{\E}: \K_{\E}(\C)\rightarrow \K_{\E}(\C)$ of abelian groups, given by
$\Omega_{\E}([M]):=[\Omega_{\E}(M)]$ for any $M\in\C.$

For each object $M\in \C$, we denote by $\langle M\rangle$  the $\Z$-submodule of $\K_{\E}(\C)$, generated by the indecomposable direct summands of $M$ in the $\E$-stable category $\uC$.

\begin{lem}\label{Fitting} \emph{(Fitting's Lemma)}
Let $R$ be a noetherian ring. Consider a left $R$-module  $M$ and $f\in \End_R(M)$. Then, for any finitely generated $R$-submodule $X$ of $M$, there is a non-negative integer 
$$\eta_f(X) :=\min\{ k\in \N\,:\, f|_{f^{m}(X)}: f^{m}(X)\stackrel{\sim}{\rightarrow} f^{m+1}(X),\ \forall m\geq k\}.$$ 
Furthermore, for any $R$-submodule $Y$ of $X$, we have that $\eta_f(Y)\leq \eta_f(X)$.
\end{lem}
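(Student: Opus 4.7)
The plan is to exploit the ascending chain condition on submodules of $X$, which holds because $R$ is noetherian and $X$ is finitely generated.

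First I would introduce the ascending chain of $R$-submodules $A_n := X \cap \ker(f^n)$ $(n\geq 0)$ of $X$, where the kernel is taken in the ambient module $M$; a little care is needed here since $f$ does not necessarily preserve $X$. By noetherianity of $X$, there is a smallest $k$ such that $A_n = A_k$ for every $n \geq k$, and my claim is that $\eta_f(X) = k$.

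The key observation is that $f|_{f^m(X)} : f^m(X) \to f^{m+1}(X)$ is automatically surjective, and it is injective precisely when $A_{m+1} = A_m$: indeed, any element of $\ker(f|_{f^m(X)})$ has the form $f^m(x)$ for some $x \in X$ with $f^{m+1}(x) = 0$, i.e.\ $x \in A_{m+1}$, and this element vanishes iff $x \in A_m$. Hence stabilization of the chain from index $k$ onwards translates exactly into the desired property, yielding $\eta_f(X) = k$.

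For the final assertion, given a submodule $Y \subseteq X$, set $A_n' := Y \cap \ker(f^n)$; clearly $A_n' = Y \cap A_n$. Thus the chain $\{A_n'\}$ stabilizes no later than $\{A_n\}$, and the equivalence above applied inside $Y$ gives $\eta_f(Y) \leq k = \eta_f(X)$. I do not foresee any serious obstacle; the argument is essentially the standard Fitting decomposition, and the only mildly subtle point is the choice of the ``correct'' ascending chain given that $X$ need not be $f$-stable.
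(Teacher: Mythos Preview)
The paper states this lemma without proof, treating it as a standard fact; your argument is correct and complete. The ascending chain $A_n = X\cap\ker(f^n)$ stabilizes because $X$ is noetherian, your equivalence between $A_{m+1}=A_m$ and injectivity of $f|_{f^m(X)}$ is exactly right, and the monotonicity via $A_n' = Y\cap A_n$ follows cleanly.
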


Now, generalising K. Igusa and G. Todorov in \cite{IT}, for any exact $ IT$-context $(\C,\E),$ we introduce the $\E$-relative Igusa-Todorov functions.

\begin{defi} Let $(\C,\E)$ be an exact $ IT$-context. The $\E$-IT functions $\Phi_{\E},\,\Psi_{\E}: \Obj(\C)\rightarrow \N$ are defined, by using Fitting's Lemma, as follows
\begin{itemize}
\item $\Phi_\E(M) := \eta_{\Omega_{\E}}(\langle M\rangle),$ and
\vspace{0.2cm}
\item $\Psi_\E(M):=\Phi_\E(M)+\mathrm{fpd}_\E(\add(\Omega_{\E}^{\Phi_\E(M)}(M))).$
\end{itemize}
\end{defi}

\begin{remark} Let $\C$ be an abelian $ IT$-context and consider $F:=F_{\Q(\C)}.$ In this case, the subfunctor $F$ coincides with $\Ext^1_\C(-,-).$ Hence, the relative projective dimension $\pd_F(M)$ of $M$ is just the 
ordinary one $\pd(M).$ So, for the $\Q(\C)$-IT functions, we set $\Phi:=\Phi_{\Q(\C)}$ and $\Psi:=\Psi_{\Q(\C)}.$ Note that, for $\C=\modu\,(\Lambda)$ and $\Lambda$ an artin algebra, the 
$\Q(\C)$-IT functions coincide with those introduced in \cite{IT}.  
\end{remark}

In what follows, we generalise (for the relative ones) some basic properties of the Igusa-Todorov functions  given in \cite{IT, HLM}. Some 
of  them can be proven in a similar way, but for completeness, we include a proof. Before doing that, we start with the following remark and lemma.

\begin{remark}\label{Huard}
For any $M\in \C$, it follows that $\Omega_{\E}(\langle M\rangle)\subseteq \langle\Omega_{\E}(M)\rangle$.
\end{remark}

\begin{lem}\label{auxlema}  For any $M\in\C$ and $k\in\N,$  we have that 
$$\langle\Omega_{\E}^k(M)\rangle=0\;\Leftrightarrow\;\Omega_{\E}^k(\langle M\rangle)=0$$
\end{lem}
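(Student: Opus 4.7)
The plan is to unwind both sides in terms of a Krull--Schmidt decomposition, working inside the free abelian group $\K_\E(\C)$ and exploiting the additivity of $\Omega_\E$ on $\uC$. By Proposition \ref{indenelcociente}, the group $\K_\E(\C)$ is free abelian with basis $\ind(\C)-\Q(\E)$, and the map $\Omega_\E:\K_\E(\C)\to\K_\E(\C)$ is well defined by Proposition \ref{syzygyaditivo}.

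First, I would fix a KS-decomposition $M=\bigoplus_{i=1}^r M_i^{n_i}$ with pairwise non-isomorphic indecomposables $M_i$ in $\C$. By Proposition \ref{indenelcociente}(a) and (c), the indecomposable direct summands of $M$ \emph{in} $\uC$ are precisely the $M_i\notin\Q(\E)$, so by definition
$$\langle M\rangle=\sum_{M_i\notin\Q(\E)}\Z\cdot[M_i]\subseteq\K_\E(\C).$$
Since $\Omega_\E$ is $\Z$-linear, applying $\Omega_\E^k$ yields
$$\Omega_\E^k(\langle M\rangle)=\sum_{M_i\notin\Q(\E)}\Z\cdot[\Omega_\E^k(M_i)].$$

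Next, I would argue that $\Omega_\E^k(\langle M\rangle)=0$ is equivalent to $[\Omega_\E^k(M_i)]=0$ in $\K_\E(\C)$ for every $M_i\notin\Q(\E)$. By freeness of $\K_\E(\C)$, an element $[X]$ vanishes iff $X\in\Q(\E)$, so this in turn is equivalent to $\Omega_\E^k(M_i)\in\Q(\E)$ for each $M_i\notin\Q(\E)$. For the remaining summands $M_j\in\Q(\E)$, Lemma \ref{Primerlema}(a) gives $\Omega_\E(M_j)=0$ in $\uC$, hence $\Omega_\E^k(M_j)\in\Q(\E)$ automatically. Thus the condition is equivalent to $\Omega_\E^k(M_i)\in\Q(\E)$ for \emph{every} indecomposable summand $M_i$ of $M$.

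Finally, by additivity of the functor $\Omega_\E:\uC\to\uC$ we have $\Omega_\E^k(M)\cong\bigoplus_i\Omega_\E^k(M_i)^{n_i}$ in $\uC$, so (by Proposition \ref{indenelcociente}(a) applied in $\uC$) $\Omega_\E^k(M)\in\Q(\E)$ iff each $\Omega_\E^k(M_i)\in\Q(\E)$. On the other side, $\langle\Omega_\E^k(M)\rangle=0$ means $\Omega_\E^k(M)$ has no non-projective indecomposable summand, which is equivalent to $\Omega_\E^k(M)\in\Q(\E)$. Chaining these equivalences together gives the claim. The only delicate point is the bookkeeping passage from ``every summand'' to ``every non-projective summand'', which is harmless because projective summands have zero syzygy in $\uC$; this is really the heart of what makes the equivalence hold.
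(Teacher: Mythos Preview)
Your proof is correct. Both you and the paper reduce the question to the statement ``$\Omega_\E^k(M_i)\in\Q(\E)$ for every indecomposable summand $M_i$'' via the Krull--Schmidt decomposition and the additivity of $\Omega_\E$ on $\uC$; the only cosmetic difference is that the paper handles the two directions asymmetrically, dispatching $(\Rightarrow)$ in one line by invoking the containment $\Omega_\E(\langle M\rangle)\subseteq\langle\Omega_\E(M)\rangle$ (Remark~\ref{Huard}) and reserving the decomposition argument for $(\Leftarrow)$, whereas you give a single chain of equivalences through the common condition. Your version is slightly more uniform; the paper's is slightly shorter on the easy direction.
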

\begin{dem} $(\Rightarrow)$ Let $\langle\Omega_{\E}^k(M)\rangle=0.$ Then, by Remark \ref{Huard}, it follows that 
$\Omega_{\E}^k(\langle M\rangle)=0.$
\

$(\Leftarrow)$ Assume that $\Omega_{\E}^k(\langle M\rangle)=0.$ By Proposition \ref{indenelcociente}, we have a decomposition 
$M=\oplus_{i=1}^n\,M_i^{a_i}$ in the stable category $\uC,$ where each $M_i\in\ind\,(\uC)$ and are pairwise non-isomorphic 
 in $\uC.$ Since, for each $i,$ we have 
that $\Omega_{\E}^k(\langle M_i\rangle)\subseteq\Omega_{\E}^k(\langle M\rangle),$ we get that $\Omega_{\E}^k( [M_i])=0.$ 
 Thus $\Omega_{\E}^k(M_i)=0$ in $\uC$ for all $i,$ and so, $\Omega_{\E}^k(M)=0$ in $\uC.$ Hence, we get 
 that  $\langle\Omega_{\E}^k(M)\rangle=0.$
\end{dem}

\begin{pro}\label{basicasdefi}
For any $M\in \C$, the following statements hold true for an exact $ IT$-context $(\C,\E).$
\begin{itemize}
\item[(a)] If $\pd_\E(M)<\infty$ then $\Phi_\E(M)=\pd_\E(M)$.
\item[(b)] If $\pd_\E(M)=\infty$ and $M\in\ind\,(\C),$ then $\Phi_\E(M)=0$.
\item[(c)]
$\Phi_\E(M)\leq \Phi_\E(M\oplus N)$, for all $N\in \C$.
\end{itemize}
\end{pro}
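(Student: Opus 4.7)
The three parts will be handled separately, all relying on the identity $\Omega_\E\dim(M)=\pd_\E(M)$ (Lemma \ref{primeraI}), the description of zero objects in $\underline{\C}$ via $\Q(\E)$ (Proposition \ref{indenelcociente}(a)), and the fact that $\K_\E(\C)$ is the \emph{free} abelian group on $\ind(\C)-\Q(\E)$, hence torsion-free. The unifying observation is that for any $k\in\N$,
\[\Omega_\E^k(\langle M\rangle)=0\iff \Omega_\E^k(M)=0\text{ in }\underline{\C}\iff \pd_\E(M)\le k,\]
where the first equivalence is Lemma \ref{auxlema} and the second comes from combining Lemma \ref{primeraI} with Proposition \ref{indenelcociente}(a).

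For (a), let $n:=\pd_\E(M)<\infty$. The case $n=0$ is immediate since then $M\in\Q(\E)$ forces $\langle M\rangle=0$, whence $\Phi_\E(M)=0$. For $n\ge1$, the displayed equivalence gives $\Omega_\E^n(\langle M\rangle)=0$ and $\Omega_\E^{n-1}(\langle M\rangle)\ne0$. The vanishing at stage $n$ makes every subsequent map $\Omega_\E^m(\langle M\rangle)\to \Omega_\E^{m+1}(\langle M\rangle)$ (for $m\ge n$) a trivial isomorphism $0\to 0$, so $\Phi_\E(M)\le n$. Conversely, if $\Phi_\E(M)=k<n$ then the maps $\Omega_\E^m(\langle M\rangle)\to \Omega_\E^{m+1}(\langle M\rangle)$ are isomorphisms for all $m\ge k$; composing these isomorphisms from $m=k$ up to $m=n-1$ identifies $\Omega_\E^k(\langle M\rangle)$ with $\Omega_\E^n(\langle M\rangle)=0$. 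But then the displayed equivalence yields $\pd_\E(M)\le k<n$, a contradiction. Hence $\Phi_\E(M)=n$.

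For (b), write $[M]\ne 0$ in $\K_\E(\C)$ (since $M\notin\Q(\E)$, as $\pd_\E(M)=\infty$), so that $\langle M\rangle=\Z\cdot[M]\simeq\Z$. Because $\pd_\E(M)=\infty$, every syzygy satisfies $\Omega_\E^m(M)\notin\Q(\E)$, hence $[\Omega_\E^m(M)]$ is a nonzero element of the free abelian group $\K_\E(\C)$ and is therefore torsion-free. Thus $\Omega_\E^m(\langle M\rangle)=\Z\cdot[\Omega_\E^m(M)]\simeq\Z$ for all $m$, and $\Omega_\E$ sends the chosen generator of the source to the chosen generator of the target; a surjective group homomorphism $\Z\to\Z$ is automatically an isomorphism. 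Therefore every map in the chain stabilizes from $m=0$ onward, yielding $\Phi_\E(M)=0$.

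Part (c) is immediate: since every indecomposable summand of $M$ in $\underline{\C}$ is also a summand of $M\oplus N$, we have $\langle M\rangle\subseteq\langle M\oplus N\rangle$ as $\Z$-submodules of $\K_\E(\C)$, and Fitting's Lemma \ref{Fitting} gives $\eta_{\Omega_\E}(\langle M\rangle)\le\eta_{\Omega_\E}(\langle M\oplus N\rangle)$. The main conceptual obstacle is, as always in these arguments, bookkeeping the passage between the object-level syzygies (defined only up to summands in $\Q(\E)$) and the well-defined elements $[\Omega_\E^k(-)]$ in $\K_\E(\C)$; this is precisely what Proposition \ref{indenelcociente} and Lemma \ref{auxlema} are designed to absorb, so once they are invoked the three parts become short.
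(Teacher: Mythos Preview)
Your proof is correct and follows essentially the same approach as the paper's own proof. The only cosmetic differences are that you package the key equivalence $\Omega_\E^k(\langle M\rangle)=0\iff\pd_\E(M)\le k$ upfront and, in part (a), argue $\Phi_\E(M)\ge n$ by contradiction rather than by directly noting that the map at stage $n-1$ is not an isomorphism; in part (b) you spell out the surjective-$\Z\to\Z$ step that the paper leaves implicit.
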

\begin{dem}
\begin{itemize}
\item[(a)] Let $n:=\pd_\E(M)<\infty$. By Lemma \ref{primeraI}, we get that $\pd_\E(M)=\Oedim(M)$. If $n=0$ then $M\in\Q(\E)$ and so $\langle M\rangle = 0$, and this implies $\Phi_\E(M)=0$. 
\

Let $n\geq 1$. Then we have an $\E$-acyclic complex
$$\Omega_{\E}^n(M)\rightarrowtail P_{n-1} \longrightarrow\cdots \rightarrow
P_1 \rightarrow P_0 \twoheadrightarrow M,$$
\noindent where $\Omega_{\E}^n(M)\in \Q(\E)$ and $\Omega_{\E}^i(M)\rightarrowtail P_{i-1}\twoheadrightarrow\Omega_{\E}^{i-1}(M)$ is $\E$-exact such that $\Omega_{\E}^i(M)\not\in \Q(\E),$ for all $0\leq i\leq n-1$. Note that 
$\langle \Omega_{\E}^k(M)\rangle=0$ for all $k\geq n,$ and it follows from Lemma \ref{auxlema} that $\Omega_{\E}^k(\langle M\rangle)=0$, for any $k\geq n$.
So we obtain that $\Omega_{\E}: \Omega_{\E}^k(\langle M\rangle) \stackrel{\sim}{\rightarrow}  \Omega_{\E}^{k+1}(\langle M\rangle)$, for all $k\geq n$.

Suppose that $\Omega_{\E}^{n-1}(\langle M\rangle)=0$. Then 
by Lemma \ref{auxlema} it follows that $\Omega_{\E}^{n-1}(M) \in \Q(\E)$, which is a contradiction. Therefore $\Omega_{\E}^{n-1}(\langle M\rangle)\neq 0$ which means that
$\Omega_{\X}: \Omega_{\E}^{n-1}(\langle M\rangle)\rightarrow \Omega_{\E}^{n}(\langle M\rangle)$ cannot be an isomorphism, proving that $\Phi_\E(M)=n$.
\item[(b)] Let $\pd_\E(M)=\infty$ and $M\in \ind\,(\C)$. By Lemma \ref{primeraI}, we get that $\Oedim(M)=\infty$ and then
$\Omega _{\E}^i(M)\not \in \Q(\E)$, for all $i\in\N$. Since $M\in \ind\,(\C)$, the group $\langle M\rangle$ is free of rank one.  On the 
other hand, the fact that $\Omega _{\E}^i(M)\not \in \Q(\E)$, means that $\Omega _{\E}^i(\langle M\rangle )$ is free of rank one, for all $i\in \N$. Therefore $\Phi_\E(M)=0$.
\item[(c)]
It follows from Fitting's Lemma since $\langle M\rangle \subseteq \langle M\oplus N\rangle$.
\end{itemize}
\end{dem}

We generalise further properties of $\Phi_\E$ from \cite{IT,HLM}. 
\begin{pro}\label{propsfi} The following statements hold true for an exact $ IT$-context $(\C,\E).$
\begin{itemize}
\item[(a)] If $\add(M)=\add(N)$ then $\Phi_\E(M)=\Phi_\E(N)$.
\item[(b)] $\Phi_\E(M\oplus P)=\Phi_\E(M)$, for all $M\in\C$ and $P\in \Q(\E)$.
\item[(c)] $\Phi_\E(M)\leq \Phi_\E(\Omega_{\E}(M))+1$, for all $M\in \C$.
\end{itemize}
\end{pro}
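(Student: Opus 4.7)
The plan is to handle (a) and (b) by the observation that the $\Z$-submodule $\langle M\rangle\subseteq \K_\E(\C)$ depends only on the non-$\E$-projective indecomposable summands of $M$ in the stable category $\uC$. For (a), the hypothesis $\add(M)=\add(N)$ forces $M$ and $N$ to have the same indecomposable direct summands up to isomorphism, so their generating sets in $\K_\E(\C)$ coincide; hence $\langle M\rangle=\langle N\rangle$, and therefore $\Phi_\E(M)=\eta_{\Omega_\E}(\langle M\rangle)=\eta_{\Omega_\E}(\langle N\rangle)=\Phi_\E(N)$.

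For (b), I would invoke Proposition \ref{indenelcociente}(a), which gives that $[P]=0$ in $\K_\E(\C)$ whenever $P\in\Q(\E)$. Every indecomposable summand of $M\oplus P$ either belongs to $\Q(\E)$ (and therefore contributes $0$) or is an indecomposable summand of $M$ in $\uC$. Thus $\langle M\oplus P\rangle=\langle M\rangle$, and the equality of the Igusa-Todorov values follows directly from the definition.

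The content of (c) lies in a careful iteration of Remark \ref{Huard}. Setting $n:=\Phi_\E(\Omega_\E(M))$, the definition of $\Phi_\E$ via Fitting's Lemma \ref{Fitting} guarantees that $\Omega_\E$ restricts to isomorphisms
$$\Omega_\E^{n+k}(\langle\Omega_\E(M)\rangle)\xrightarrow{\ \sim\ }\Omega_\E^{n+k+1}(\langle\Omega_\E(M)\rangle)\quad\text{for every } k\geq 0.$$
Applying $\Omega_\E^{n+k}$ to the inclusion $\Omega_\E(\langle M\rangle)\subseteq\langle\Omega_\E(M)\rangle$ of Remark \ref{Huard} produces $\Omega_\E^{n+k+1}(\langle M\rangle)\subseteq\Omega_\E^{n+k}(\langle\Omega_\E(M)\rangle)$. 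Injectivity of $\Omega_\E$ on the right-hand group restricts to injectivity on the subgroup, while the surjectivity of $\Omega_\E$ from $\Omega_\E^{n+k+1}(\langle M\rangle)$ onto $\Omega_\E^{n+k+2}(\langle M\rangle)$ holds by construction of the iterated image. Thus $\Omega_\E$ restricts to an isomorphism on $\Omega_\E^{n+k+1}(\langle M\rangle)$ for every $k\geq 0$, and Fitting's Lemma then delivers $\eta_{\Omega_\E}(\langle M\rangle)\leq n+1$, i.e.\ $\Phi_\E(M)\leq \Phi_\E(\Omega_\E(M))+1$.

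The main obstacle I expect is in (c): one has to be careful that the isomorphism property enjoyed by $\Omega_\E$ on the larger group $\Omega_\E^{n+k}(\langle\Omega_\E(M)\rangle)$ really does transfer to its subgroup $\Omega_\E^{n+k+1}(\langle M\rangle)$, and that this propagates in $k$. Parts (a) and (b) are, by contrast, essentially bookkeeping consequences of the definition of $\K_\E(\C)$ together with Proposition \ref{indenelcociente}(a).
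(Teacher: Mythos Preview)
Your proof is correct and follows essentially the same approach as the paper. Parts (a) and (b) are identical to the paper's argument, which simply observes that $\langle M\rangle=\langle N\rangle$ (resp.\ $\langle M\oplus P\rangle=\langle M\rangle$) and invokes the definition. For (c), the paper merely cites \cite[Lemma 3.4]{HLM} after noting Remark \ref{Huard}; your write-up supplies the details of that cited argument: surjectivity of $\Omega_\E$ on $\Omega_\E^{m}(\langle M\rangle)$ is automatic, and injectivity is inherited from the ambient group $\Omega_\E^{n+k}(\langle\Omega_\E(M)\rangle)$ once $k\geq 0$.
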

\begin{dem}
\begin{itemize}
\item[(a)] If $\add(M)=\add(N)$ then $\langle M\rangle = \langle N \rangle$, and hence, $\Phi_\E(M)=\Phi_\E(N)$.
\item[(b)] Let $M\in \C$ and $P\in \Q(\E)$. Therefore $\langle M\rangle = \langle M\oplus P\rangle$ and then $\Phi_\E(M\oplus P)=\Phi_\E(M)$.
\item[(c)] By the Remark \ref{Huard}, we know that $\Omega_{\E}(\langle M\rangle)\subseteq \langle \Omega_{\E}(M)\rangle.$ So the 
 same proof given in \cite[Lemma 3.4]{HLM} works, by replacing the ordinary syzygy $\Omega$ by the relative one $\Omega_{\E}.$ 
\end{itemize}
\end{dem}

\begin{pro}\label{propsi}  The following statements hold true for an exact $ IT$-context $(\C,\E).$
\begin{itemize}
\item[(a)] If $\pd_\E(M)< \infty$ then $\Psi_\E(M)=\Phi_\E(M)=\pd_\E(M).$
\item[(b)] If $\add(M)=\add(N)$ then $\Psi_\E(M)=\Psi_\E(N).$
\item[(c)] If $\pd_\E(M)=\infty$ and $M\in\ind(\C),$ then $\Psi_\E(M)=0.$
\item[(d)] $\Psi_\E(M\oplus P)=\Psi_\E(M)$ for all $M\in\C$ and $P\in \Q(\E).$
\item[(e)] Let $Z\mid \Omega_{\E}^n(M),$ where $n\leq\Phi_\E(M)$  and $\pd_\E(Z)<\infty$. Then 
$$\pd_\E(Z)+n\leq \Psi_\E(M).$$
\item[(f)] $\Psi_\E(M)\leq \Psi_\E(M\oplus N)$ for any $M,N\in\C.$
\item[(g)] $\Psi_\E(M)\leq \Psi_\E(\Omega_{\E}(M))+1$ for all $M\in \C.$
\end{itemize}
\end{pro}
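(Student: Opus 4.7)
The plan is to dispatch parts (a)--(d) quickly by combining the definition of $\Psi_\E$ with the corresponding results already established for $\Phi_\E$ in Propositions \ref{basicasdefi} and \ref{propsfi}, and then concentrate on proving the key inequality (e), from which (f) and (g) will follow. For (a): Proposition \ref{basicasdefi}(a) gives $\Phi_\E(M)=\pd_\E(M)=n$, and the $n$-th syzygy then lies in $\Q(\E)$, so $\add(\Omega_\E^n(M))\subseteq\Q(\E)$ and the correction term $\fpd_\E(\add(\Omega_\E^n(M)))$ is zero. For (b): Proposition \ref{propsfi}(a) gives $\Phi_\E(M)=\Phi_\E(N)$, and if $\add(M)=\add(N)$ then their $k$-th syzygies have the same $\add$ up to $\E$-projective summands, which do not affect $\fpd_\E$ because $\E$-projectives have zero $\E$-projective dimension. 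Part (c) uses Proposition \ref{basicasdefi}(b) to get $\Phi_\E(M)=0$; since $M$ is indecomposable with $\pd_\E(M)=\infty$, every non-zero element of $\add(M)$ is isomorphic to some $M^{j}$ with $\pd_\E(M^{j})=\infty$, so $\fpd_\E(\add(M))=0$. Part (d) is analogous to (b), using Proposition \ref{propsfi}(b) together with the observation that appending an $\E$-projective summand to $M$ appends only $\E$-projective summands to each $\Omega_\E^k(M)$.

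For part (e), write $k:=\Phi_\E(M)$ and $d:=\pd_\E(Z)$. If $d+n\leq k$, then $d+n\leq\Phi_\E(M)\leq\Psi_\E(M)$ and we are done. Otherwise $d>k-n\geq 0$, and I pass to the $(k-n)$-th syzygy $W:=\Omega_\E^{k-n}(Z)$, which satisfies $\pd_\E(W)=d-(k-n)>0$ by Lemma \ref{primeraI} together with the long exact sequence for $\Ext_\E^{\ast}$. Since $Z\mid\Omega_\E^n(M)$ in $\C$ and $\Omega_\E$ is additive (Proposition \ref{syzygyaditivo}), iterating and applying Schanuel's Lemma (Lemma \ref{schanuelL}) shows that $W\oplus P_1\cong\Omega_\E^{k}(M)\oplus P_2$ for some $P_1,P_2\in\Q(\E)$, so $W$ is a direct summand of $\Omega_\E^{k}(M)\oplus P_2$. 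Using the Krull--Schmidt property of $\C$ and Proposition \ref{indenelcociente}, the indecomposable non-$\E$-projective summands of $W$ are all indecomposable summands of $\Omega_\E^k(M)$, and hence there exists $A\in\add(\Omega_\E^k(M))$ with $\pd_\E(A)=\pd_\E(W)=d-(k-n)$. Therefore $d-(k-n)\leq\fpd_\E(\add(\Omega_\E^k(M)))=\Psi_\E(M)-k$, which rearranges to $d+n\leq\Psi_\E(M)$.

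Parts (f) and (g) follow from (e). For (f), set $k:=\Phi_\E(M)$: for any $Z\mid\Omega_\E^k(M)$ with $\pd_\E(Z)<\infty$, Schanuel's Lemma yields $\Omega_\E^k(M\oplus N)\oplus P\cong\Omega_\E^k(M)\oplus\Omega_\E^k(N)\oplus P'$ for some $P,P'\in\Q(\E)$, so (after discarding $\E$-projective parts, which do not change $\pd_\E$) we may regard $Z$ as a summand of $\Omega_\E^k(M\oplus N)$; since $k\leq\Phi_\E(M\oplus N)$ by Proposition \ref{propsfi}(c), part (e) applied to $M\oplus N$ with $n=k$ gives $\pd_\E(Z)+k\leq\Psi_\E(M\oplus N)$, and taking the supremum over $Z$ delivers $\Psi_\E(M)\leq\Psi_\E(M\oplus N)$. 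For (g), set $k:=\Phi_\E(M)$; when $k\geq 1$, Proposition \ref{propsfi}(c) gives $k-1\leq\Phi_\E(\Omega_\E(M))$, and for $Z\mid\Omega_\E^k(M)=\Omega_\E^{k-1}(\Omega_\E M)$ (again up to $\E$-projective summands) with $\pd_\E(Z)<\infty$, part (e) applied to $\Omega_\E(M)$ with $n=k-1$ yields $\pd_\E(Z)+k-1\leq\Psi_\E(\Omega_\E M)$, whence $\Psi_\E(M)\leq\Psi_\E(\Omega_\E M)+1$; the case $k=0$ is handled by splitting on whether $\pd_\E(Z)=0$ and, when positive, replacing $Z$ by $\Omega_\E(Z)\mid\Omega_\E(M)$ and applying (e) with $n=0$. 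The main obstacle is the bookkeeping in part (e): turning the summand relation $Z\mid\Omega_\E^n(M)$ in $\C$ into a summand relation for $\Omega_\E^{k}(M)$ after $(k-n)$ applications of syzygy requires a careful use of Schanuel's Lemma together with the Krull--Schmidt decomposition to isolate the non-$\E$-projective part that carries the full $\E$-projective dimension.
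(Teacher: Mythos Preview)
Your proof is correct and follows essentially the same route as the paper: reduce (a)--(d) to the $\Phi_\E$-statements already proved, establish (e) by pushing $Z$ forward $\Phi_\E(M)-n$ syzygies so that it becomes (up to $\E$-projectives) a summand of $\Omega_\E^{\Phi_\E(M)}(M)$, and then derive (f) and (g) from (e). Two minor slips to fix: in (f) the inequality $k\le\Phi_\E(M\oplus N)$ comes from Proposition~\ref{basicasdefi}(c), not Proposition~\ref{propsfi}(c); and in (e) the isomorphism should read $W\oplus W'\oplus P_1\cong\Omega_\E^{k}(M)\oplus P_2$ with $W'=\Omega_\E^{k-n}(Z')$ the syzygy of the complement of $Z$ --- your stated conclusion that $W$ is a summand of $\Omega_\E^{k}(M)\oplus P_2$ is nonetheless correct. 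As a small bonus, your case analysis in (g) for $k=0$ is more careful than the paper, which sets $n:=\Phi_\E(M)-1$ without comment; conversely, the paper's proof of (e) avoids your case split by using the single inequality $\pd_\E(Z)\le\pd_\E(\Omega_\E^{k-n}(Z))+(k-n)$, which holds uniformly.
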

\begin{dem}
\begin{itemize}
\item[(a)]
Let $n=\pd_\E(M)<\infty$. Then, by Proposition \ref{basicasdefi} (a) and Lemma \ref{primeraI}, we get
$n=\Phi_\E(M)=\Oedim(M)$. Therefore $\Omega_{\E}^{\Phi_\E(M)}(M)\in \Q(\E)$ and then
$\add(\Omega_{\E}^{\Phi_\E(M)}(M))\subseteq \Q(\E)$, obtaining that $\mathrm{fpd_\E(\add(\Omega_{\E}^{\Phi_\E(M)}(M)))}=0.$ Thus 
$\Psi_\E(M)=\Phi_\E(M),$ proving (a).
\item[(b)] By  Proposition \ref{propsfi} (a), we know that $\Phi_\E(M)=\Phi_\E(N): = \alpha$. Hence the result follows from the fact that
$\add(\Omega_{\E}^{\alpha}(M))= \add(\Omega_{\E}^{\alpha}(N))$, since $\add(M)= \add(N)$.
\item[(c)] From  Proposition \ref{basicasdefi} (b), we know that $\Phi_{\E}(M)=0$. Furthermore, using the fact that $M\in\ind(\C),$ we get 
that $\add(\Omega_{\E}^{\Phi_{\E}(M)}(M))=\add(M)=\{M^k\;\text{with}\;k\in\N\}$ and then 
$\mathrm{fpd}_\E(\add(\Omega_{\E}^{\Phi_{\E}(M)}(M)))=0.$ Hence,  this shows that $\Psi_\E(M)=0$.
\item[(d)] It follows from Proposition \ref{propsfi} (b) and the fact that $\Omega_{\E}\mathrm{dim}\,(P)=0$, $\forall P\in\Q(\E).$ 
\item[(e)] Let $k:=\Phi_\E(M)-n$. Then $\Omega_{\E}^k(Z)\mid\Omega_{\E}^{\Phi_\E(M)}(M)$ and $\pd_\E(\Omega_{\E}^{k}(Z))< \infty,$  
 since $\mathrm{pd}_\E(Z)< \infty$ (see Lemma \ref{primeraI}). So, we get that 
 $$(*)\quad\mathrm{pd}_\E(\Omega_{\E}^{k}(Z))+\Phi_\E(M)\leq \Psi_\E(M).$$
 On the other hand, it can be seen that 
 $$\mathrm{pd}_\E(Z)+n\leq\mathrm{pd}_\E(\Omega_{\E}^{k}(Z))+k+n=\mathrm{pd}_\E(\Omega_{\E}^{k}(Z))+\Phi_\E(M).$$
 Therefore, from $(*),$ we conclude that $\pd_\E(Z)+n\leq \Psi_\E(M).$
\item[(f)] Let $A,B\in\C.$ By Proposition \ref{basicasdefi} (c), we know that $\Phi_\E(A)\leq \Phi_\E(A\oplus B).$ Hence, (f) follows 
from (e)  by taking $M:=A\oplus B,$ $n:=\Phi_\E(A)$ and $Z|\Omega_{\E}^n(A)$ such that 
$\pd_\E(Z)=\mathrm{fpd}_\E(\add(\Omega_{\E}^n(A))).$
\item[(g)] Let $M\in \C.$ By Proposition \ref{propsfi} (c), we know that $\Phi_\E(M)\leq \Phi_\E(\Omega_{\E}(M))+1.$ Take 
$n:=\Phi_\E(M)-1\leq \Phi_\E(\Omega_{\E}(M))$ and let $Z\mid\Omega_{\E}^n(\Omega_{\E}(M))$ be such that 
$$\pd_\E(Z)=\mathrm{fpd}_\E(\add(\Omega_{\E}^{\Phi_\E(M)}(M)).$$
Then,  by (e), we get that $\pd_\E(Z)+n \leq \Psi_\E(\Omega_{\E}(M)).$ Therefore
$\Psi_\E(M)=\pd_\E(Z) + \Phi_\E(M)\leq \Psi_\E(\Omega_{\E}(M))+1$.
\end{itemize}
\end{dem}

Before proving the generalisation of the main property \cite[Theorem 4]{IT} for the  $\E$-IT function $\Psi_\E,$ we state 
and prove the following lemma.
 
\begin{lem}\label{syzygysecs}
Let $A\rightarrowtail B\twoheadrightarrow C$ be an $\E$-exact sequence. Then, for each $n\in \N$, there is an $\E$-exact sequence:
$\Omega_{\E}^n(A)\rightarrowtail \Omega_{\E}^n(B)\oplus P\twoheadrightarrow \Omega_{\E}^n(C)$, where $P\in \Q(\E).$
\end{lem}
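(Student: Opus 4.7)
My plan is to proceed by induction on $n\in\N$. The base case $n=0$ is immediate by taking $P:=0$, since $A\rightarrowtail B\twoheadrightarrow C$ is itself the required $\E$-exact sequence with $\Omega_{\E}^0(X)=X$. The real content is in the inductive step, which combines Horseshoe's Lemma (Lemma \ref{horseshoe}) with Schanuel's Lemma (Lemma \ref{schanuelL}) to reconcile the fact that $\E$-syzygies are only well defined up to direct summands of $\E$-projectives.

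Assume the $\E$-exact sequence $\eta_n:\;\Omega_{\E}^n(A)\rightarrowtail\Omega_{\E}^n(B)\oplus P_n\twoheadrightarrow\Omega_{\E}^n(C)$ exists for some $P_n\in\Q(\E)$. Choose admissible epics $Q_A\twoheadrightarrow\Omega_{\E}^n(A)$ and $Q_C\twoheadrightarrow\Omega_{\E}^n(C)$ with $Q_A,Q_C\in\Q(\E)$, and set the $\E$-syzygies $\Omega_{\E}^{n+1}(A)$ and $\Omega_{\E}^{n+1}(C)$ to be their kernels. Applying Horseshoe's Lemma to $\eta_n$ with these projective covers of the outer terms yields an $\E$-exact diagram whose top row is an $\E$-exact sequence
$$\Omega_{\E}^{n+1}(A)\;\rightarrowtail\; K\;\twoheadrightarrow\;\Omega_{\E}^{n+1}(C),$$
where $K$ fits in an $\E$-exact sequence $K\rightarrowtail Q_A\oplus Q_C\twoheadrightarrow \Omega_{\E}^n(B)\oplus P_n$; that is, $K$ is an $\E$-syzygy of $\Omega_{\E}^n(B)\oplus P_n$ relative to the $\E$-projective $Q_A\oplus Q_C$.

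On the other hand, fixing a chosen syzygy $\Omega_{\E}^{n+1}(B)$ arising from some admissible epic $Q_B\twoheadrightarrow \Omega_{\E}^n(B)$ with $Q_B\in\Q(\E)$, the object $\Omega_{\E}^{n+1}(B)\oplus P_n$ is an $\E$-syzygy of $\Omega_{\E}^n(B)\oplus P_n$ relative to the $\E$-projective $Q_B\oplus P_n$. Applying Schanuel's Lemma to these two $\E$-syzygies of $\Omega_{\E}^n(B)\oplus P_n$, we obtain an isomorphism
$$K\oplus Q_B\oplus P_n\;\cong\;\Omega_{\E}^{n+1}(B)\oplus Q_A\oplus Q_C.$$
Now I take the direct sum of the $\E$-exact sequence above with the split $\E$-exact sequence $0\rightarrowtail Q_B\oplus P_n\xrightarrow{=} Q_B\oplus P_n\twoheadrightarrow 0$; using that the direct sum of two $\E$-exact sequences is $\E$-exact and substituting via the isomorphism just obtained, this produces the desired $\E$-exact sequence
$$\Omega_{\E}^{n+1}(A)\;\rightarrowtail\;\Omega_{\E}^{n+1}(B)\oplus P_{n+1}\;\twoheadrightarrow\;\Omega_{\E}^{n+1}(C)$$
with $P_{n+1}:=Q_A\oplus Q_C\in\Q(\E)$, completing the induction.

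The main subtlety, and the only real obstacle, is exactly the non-uniqueness of $\E$-syzygies: the Horseshoe construction produces a natural candidate $K$ for $\Omega_{\E}^{n+1}(B)$ but relative to the ``wrong'' projective cover $Q_A\oplus Q_C$ of $\Omega_{\E}^n(B)\oplus P_n$. Schanuel's Lemma is precisely the tool that lets us absorb this discrepancy into the projective summand $P_{n+1}$, at the cost of enlarging both the middle term and the projective ambient object by a split summand. All other ingredients (direct sum of $\E$-exact sequences, and the fact that $\Omega_{\E}^n(B)\oplus P_n$ has the same $\E$-syzygies as $\Omega_{\E}^n(B)$ up to projective summands) are formal consequences of the axioms for exact categories.
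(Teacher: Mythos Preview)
Your argument is correct and is precisely what the paper intends: its own proof is the single sentence ``by applying $n$ times Lemma~\ref{horseshoe} and Lemma~\ref{schanuelL}, we get the result,'' and you have spelled out that induction. Two small slips to clean up: the $\E$-syzygy of $\Omega_\E^n(B)\oplus P_n$ relative to $Q_B\oplus P_n$ is $\Omega_\E^{n+1}(B)$, not $\Omega_\E^{n+1}(B)\oplus P_n$ (your Schanuel isomorphism is written correctly regardless); and when you direct-sum with the split sequence, one of the outer terms also acquires the summand $Q_B\oplus P_n$, so you should remark that this is absorbed into the choice of $\Omega_\E^{n+1}(A)$ (or $\Omega_\E^{n+1}(C)$), which is only determined up to $\E$-projective summands.
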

\begin{dem}
Since, for each $M\in\C,$ we can construct an $\E$-projective resolution, by applying $n$ times Lemma \ref{horseshoe} and Lemma \ref{schanuelL}, we get the result.
\end{dem}

The following theorem is the relative version of the result given, by K. Igusa and G. Todorov, in \cite[Theorem 4]{IT}. 

\begin{teo}\label{desigualdad}
Let $(\C,\E)$ be a strong exact $ IT$-context. Then, for any $\E$-exact sequence $A\rightarrowtail B\twoheadrightarrow C$   such that $\pd_\E(C)<\infty,$ we have that $$\pd_\E(C)\leq \Psi_\E(A\oplus B)+1.$$
\end{teo}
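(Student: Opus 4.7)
Set $n := \Phi_\E(A \oplus B)$ and $d := \pd_\E(C)$. The plan is to dispatch an easy regime first and then, in the remaining case $d \geq n + 2$, to produce a direct summand of the $n$-th $\E$-syzygy of $A \oplus B$ whose $\E$-projective dimension is exactly $d - n - 1$, so as to invoke Proposition \ref{propsi}(e).

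First I would observe that the case $d \leq n + 1$ is immediate: since $\Phi_\E(A \oplus B) \leq \Psi_\E(A \oplus B)$ directly from the definition of $\Psi_\E$, we get $d \leq n + 1 \leq \Psi_\E(A \oplus B) + 1$. So we may assume $d \geq n + 2$; equivalently $\Omega_\E^{n+1}(C) \notin \Q(\E)$ and $\pd_\E(\Omega_\E^{n+1}(C)) = d - n - 1 \geq 1$. Applying Lemma \ref{syzygysecs} to the given sequence at syzygy level $n$ produces an $\E$-exact sequence
$$\eta:\ \Omega_\E^n(A) \rightarrowtail \Omega_\E^n(B) \oplus P \twoheadrightarrow \Omega_\E^n(C), \qquad P \in \Q(\E).$$

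The heart of the proof is to extract from $\eta$ a direct summand $Z$ of $\Omega_\E^n(A \oplus B)$ (equivalently, of $\Omega_\E^n(A) \oplus \Omega_\E^n(B)$ in $\uC$) with $\pd_\E(Z) = d - n - 1$. Once such a $Z$ is available, Proposition \ref{propsi}(e) applied to $M := A \oplus B$ with the index $n = \Phi_\E(M)$ and the summand $Z$ gives $\pd_\E(Z) + n \leq \Psi_\E(A \oplus B)$; substituting $\pd_\E(Z) = d - n - 1$ rewrites as $d - 1 \leq \Psi_\E(A \oplus B)$, which is the theorem.

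To construct $Z$, both hypotheses play essential roles. The strongness of the $IT$-context supplies the Fitting property for endomorphisms in $\C$, while the defining property of $n = \Phi_\E(A \oplus B)$ forces $\Omega_\E$ to restrict to an isomorphism of $\Omega_\E^m(\langle A \oplus B \rangle)$ onto $\Omega_\E^{m+1}(\langle A \oplus B \rangle)$ for every $m \geq n$. Following the template of \cite{IT}, adapted to the exact setting, I would manufacture an endomorphism $\theta$ of an object $W \in \C$ assembled from $\Omega_\E^n(A)$, $\Omega_\E^n(B)$ and $P$ that encodes the non-splitting of $\eta$; apply the Fitting decomposition $W = W_1 \oplus W_2$ with $\theta|_{W_1} = 1_{W_1}$ and $\theta|_{W_2}$ nilpotent; and use the syzygy-isomorphism property at step $n$ to argue that $W_1$ stabilises modulo $\E$-projectives under iteration of $\Omega_\E$, whereas $W_2$, after one further syzygy step, recovers $\Omega_\E^{n+1}(C)$ as a direct summand. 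The resulting summand of $\Omega_\E^n(A \oplus B)$ is the desired $Z$.

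The principal obstacle is exactly this last construction: coordinating the Fitting decomposition of $\theta$ in $\C$ with the abelian-group isomorphism given by $\Omega_\E$ on $\langle A \oplus B \rangle$ at step $n$, and verifying that the extracted summand $Z$ has $\E$-projective dimension \emph{equal} to $d - n - 1$, not merely bounded by it. This is the step where the transition from the classical module-category setting of \cite{IT} to a general exact $IT$-context demands the most care.
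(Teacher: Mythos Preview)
Your plan has the right overall shape --- reduce to an $\E$-exact sequence at some syzygy level, apply Fitting to an endomorphism, extract a finite-$\pd_\E$ summand, and feed it to Proposition~\ref{propsi}(e) --- but two essential ingredients are missing or misdirected.

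First, you never articulate the reason one obtains an \emph{endomorphism} to which Fitting applies. In the paper the key observation is that at a suitable level $n$ one has $[\Omega_\E^n(A)]=[\Omega_\E^n(B)]$ in $\K_\E(\C)$; this lets you write $\Omega_\E^n(A)=T\oplus Q'$ and $\Omega_\E^n(B)\oplus P'=T\oplus Q$ with $Q,Q'\in\Q(\E)$, so that the admissible monic in $\eta$ reads as a matrix whose $(1,1)$-entry is an honest endomorphism $f\in\End_\C(T)$. The paper chooses $n$ to be the \emph{minimal} index where this equality holds and shows $n\le\Phi_\E(A\oplus B)$; your choice $n=\Phi_\E(A\oplus B)$ also forces the equality (since the minimal index is at most $\Phi_\E(A\oplus B)$), but you do not say so, and your description of an endomorphism ``assembled from $\Omega_\E^n(A)$, $\Omega_\E^n(B)$ and $P$'' misses this point entirely.

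Second, your proposed mechanism --- using the abelian-group isomorphism $\Omega_\E:\Omega_\E^m\langle A\oplus B\rangle\to\Omega_\E^{m+1}\langle A\oplus B\rangle$ for $m\ge n$ to show ``$W_1$ stabilises'' and ``$W_2$ recovers $\Omega_\E^{n+1}(C)$ as a summand'' --- is not how the argument runs and it is not clear how to make it work: stability of ranks in $\K_\E(\C)$ does not directly control direct-summand decompositions of objects. The paper instead applies $\Hom_\C(-,M)$ to $T\oplus Q'\rightarrowtail T\oplus Q\twoheadrightarrow\Omega_\E^n(C)$ and analyses the resulting long exact sequence in $\Ext_\E$: the induced map $\gamma_k$ on $\Ext^k_\E(T,M)$ has a nilpotent block $\beta_k=\Ext^k_\E(\alpha,M)$, and ``nilpotent $+$ epi $\Rightarrow$ zero'' forces $\pd_\E(T_2)<\infty$, after which exactness gives $\pd_\E(\Omega_\E^n(C))\le\pd_\E(T_2)+1$. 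No exact equality $\pd_\E(Z)=d-n-1$ is ever established or needed --- only the two inequalities just stated, which together with Proposition~\ref{propsi}(e) yield the bound.
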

\begin{dem} Let $A\rightarrowtail B\twoheadrightarrow C$ be an $\E$-exact sequence  such that $\pd_\E(C)<\infty.$ Take $r:=\pd_\E(C)=\Oedim(C)$. By Lemma \ref{syzygysecs}, we get an $\E$-exact sequence 
$\Omega_{\E}^r(A)\rightarrowtail\Omega_{\E}^r(B)\oplus P \twoheadrightarrow \Omega_{\E}^r(C)$ with $P\in\Q(\E),$ which splits since 
$\Omega_{\E}^r(C)\in \Q(\E).$  Hence $[\Omega_{\E}^r(A)] = [\Omega_{\E}^r(B)]$ and so there  exists the natural number 
$$n:=\min\{j\in\N\ :\ [\Omega_{\E}^j(A)] = [\Omega_{\E}^j(B)] \}.$$
Note that $n\leq \pd_\E(C).$ Furthermore,  since $[A],\ [B]\in \langle A\oplus B\rangle,$  $[\Omega_{\E}^n(A)]=[\Omega_{\E}^n(B)]$ 
and $[\Omega_{\E}^{n-1}(A)] \neq [\Omega_{\E}^{n-1}(B)]$, it follows that $n\leq  \Phi_\E(A\oplus B)$. By Lemma \ref{syzygysecs} we 
 get an $\E$-exact sequence $\Omega_{\E}^n(A)\stackrel{t}{\rightarrowtail} \Omega_{\E}^n(B)\oplus P' 
 \stackrel{g}{\twoheadrightarrow} \Omega_{\E}^n(C)$, for some $P'\in \Q(\E)$. Using that 
 $[\Omega_{\E}^n(A)]=[\Omega_{\E}^n(B)]$,  we can obtain decompositions
$\Omega_{\E}^n(A)=T\oplus Q'$ and $\Omega_{\E}^n(B)\oplus P' =T\oplus Q$, with $Q, Q'\in \Q(\E)$. Thus, we can see the previous $\E$-exact sequence as:
\begin{equation}\label{trebol}
\quad \quad T\oplus Q' \stackrel{t}{\rightarrowtail} T\oplus Q \stackrel{g}{\twoheadrightarrow} \Omega_{\E}^n(C),
\end{equation} 
where $t=\left(
                                            \begin{array}{cc}
                                              f & * \\
                                              * & * \\
                                            \end{array}
                                          \right)$ with $f\in \End_{\C}(T)$. 
Since $\C$ satisfies Fitting's property, there is a decomposition $T=T_1\oplus T_2$ such that $f=\left(
                                            \begin{array}{cc}
                                              1 & 0 \\
                                              0 & \alpha \\
                                            \end{array}
                                          \right)$ and $\alpha:T_2\rightarrow T_2$ is nilpotent.
Let $M\in\C$. By applying the functor $\Hom_{\C}(-,M)$ to the $\E$-exact sequence (\ref{trebol}), and using that 
$Q,Q' \in \Q(\E)$, we get the long exact sequence  (for $k\geq 1$)
\begin{equation}\label{corazon}\quad \xymatrix{\Ext^k_\E(T,M) \ar[r]^{\gamma_k} & \Ext^k_\E(T,M) \ar[r]^{\delta_k\quad} 
 & \Ext^{k+1}_\E(\Omega_{\E}^n(C),M)\ar[dll]_{\lambda_{k+1}}\\ 
\Ext^{k+1}_\E(T,M) \ar[r]_{\quad\gamma_{k+1}\quad} & \Ext^{k+1}_\E(T,M), &
}\end{equation}
where $\gamma_k =\left(\begin{array}{cc}
                                              1 & 0 \\
                                              0 & \beta_k \\
                                            \end{array}
                                          \right)$, $\beta_k=\Ext^k_\E(\alpha, M)$ and $\lambda_{k+1}=\Ext^{k+1}_\E(g,M)$.
Note that $\beta_k$ is nilpotent, for all $k$, since $\alpha$ is nilpotent. Using that 
$$\pd_\E(\Omega_{\E}^n(C))= \pd_\E(C)-n=r-n,$$  we get  that 
\begin{equation}\label{estrella}\quad \Ext^{j}_\E(\Omega_{\E}^n(C),M)=0\quad\text{for all}\; j>r-n.
\end{equation} 
\indent
We assert now that $\pd_\E(T_2)<\infty.$ Indeed, by $(\ref{corazon})$ and $(\ref{estrella}),$ we conclude  that $\gamma_k: \Ext^k_\E(T,M) \rightarrow \Ext^k_\E(T,M)$ 
is an epimorphism for all $k>r-n-1$. Since $\gamma_k =\left(\begin{array}{cc}
                                              1 & 0 \\
                                              0 & \beta_k \\
                                            \end{array}
                                          \right)$, 
we get that  $\beta_k: \Ext_\E^k(T_2,M) \rightarrow \Ext_\E^k(T_2,M)$ is a nilpotent epimorphism for all $k>r-n-1$. Therefore $\Ext_\E^k(T_2,M)=0$, for all $k>r-n-1$ and any $M\in \C$, proving that $\pd_\E(T_2)<\infty$.

Now, we see that $\pd_\E(\Omega_{\E}^n(C))\leq \pd_\E(T_2)+1.$ Indeed, assume that\linebreak $\Ext^{k+1}_\E(\Omega_{\E}^n(C),M) \neq 0$ for some $k\geq 1$ and $M\in\C.$ So, we have that $\delta_{k}\neq 0$ or $\lambda_{k+1}\neq 0$. Note that $\delta_{k}\neq 0$ implies 
that $\gamma_{k}$ is not an epimorphism, and then, $\beta_k$ is not an epimorphism. As a consequence we get that $\Ext_\E^k(T_2,M)\neq 0$. On the other hand, if $\lambda_{k+1}\neq 0$ then $\gamma_{k+1}$ is not an monomorphism and so $\beta_{k+1}$ is not an monomorphism. Thus, we  get that $\Ext_\E^{k+1}(T_2,M)\neq 0$. Therefore, we have seen that  $\Ext^{k+1}_\E(\Omega_{\E}^n(C),M) \neq 0$ implies that $\Ext_\E^{k}(T_2,M)\neq 0$ or $\Ext_\E^{k+1}(T_2,M)\neq 0$. Hence, by  the above statement, it follows that 
$\Ext^{k+1}_\E(\Omega_{\E}^n(C),M)=0$, for all $k\geq \pd_\E(T_2)+1$ and for any $M\in \C$. Then,  we 
conclude  that $\pd_\E(\Omega_{\E}^n(C))\leq \pd_\E(T_2)+1$. Now, using the facts that $T_2\mid\Omega_{\E}^n(A\oplus B)$, $\pd_\E(T_2)<\infty$ and $n\leq \Phi_\E(A\oplus B)$, it follows from Proposition \ref{propsi} (e) that $\pd_\E(T_2) + n\leq \Psi_\E(A\oplus B)$. Therefore 
$$\pd_\E(C)=\pd_\E(\Omega_{\E}^n(C)) + n\leq \pd_\E(T_2)+n+1\leq \Psi_\E(A\oplus B)+1,$$ 
proving the inequality.
\end{dem}

\begin{cor}Let $(\C,\E)$ be a strong exact $ IT$-context. Then, for any $\E$-exact sequence $A\rightarrowtail B\twoheadrightarrow C,$ the following statements hold.
\begin{itemize}
\item[(a)] $\pd_\E(B)<\infty\quad\Rightarrow\quad\pd_\E(B)\leq 1+\Psi_\E(A\oplus\Omega_\E(C)).$
\item[(b)] $\pd_\E(A)<\infty\quad\Rightarrow\quad\pd_\E(A)\leq 1+\Psi_\E(\Omega_\E(B\oplus C)).$
\end{itemize}
\end{cor}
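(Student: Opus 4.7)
The strategy for both parts is to produce, out of the given $\E$-exact sequence $A\rightarrowtail B\twoheadrightarrow C$ together with a suitable $\E$-projective presentation, a new $\E$-exact sequence whose right-hand term is the one known to have finite $\E$-projective dimension, and then to apply Theorem \ref{desigualdad} followed by a clean-up of $\E$-projective summands via Proposition \ref{propsi}(d).

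For (a), I would fix an $\E$-syzygy sequence $\Omega_\E(C)\rightarrowtail P\twoheadrightarrow C$ with $P\in\Q(\E)$ and pull the admissible epic $B\twoheadrightarrow C$ back along $P\twoheadrightarrow C$ to form $X:=B\times_C P$. Standard pullback calculus in exact categories (see \cite{Buhler}) produces two $\E$-exact sequences $A\rightarrowtail X\twoheadrightarrow P$ and $\Omega_\E(C)\rightarrowtail X\twoheadrightarrow B$. The first splits because $P\in\Q(\E)$, identifying $X$ with $A\oplus P$, and the second then reads $\Omega_\E(C)\rightarrowtail A\oplus P\twoheadrightarrow B$. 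Since $\pd_\E(B)<\infty$, Theorem \ref{desigualdad} applies and gives $\pd_\E(B)\leq 1+\Psi_\E(\Omega_\E(C)\oplus A\oplus P)$; Proposition \ref{propsi}(d) then eliminates the projective summand $P$, yielding (a).

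For (b), I would carry out the dual pullback: fix $\Omega_\E(B)\rightarrowtail P\twoheadrightarrow B$ with $P\in\Q(\E)$ and form $Y:=P\times_B A$ from the admissible monic $A\rightarrowtail B$ and the admissible epic $P\twoheadrightarrow B$. This produces the $\E$-exact sequences $\Omega_\E(B)\rightarrowtail Y\twoheadrightarrow A$ and $Y\rightarrowtail P\twoheadrightarrow C$. The second sequence displays $Y$ as an $\E$-syzygy of $C$, so by Schanuel's Lemma (Lemma \ref{schanuelL}) $Y$ and any fixed $\Omega_\E(C)$ differ only by $\E$-projective direct summands. Applying Theorem \ref{desigualdad} to the first sequence (using $\pd_\E(A)<\infty$) and invoking Proposition \ref{propsi}(d) repeatedly to absorb projective summands gives $\pd_\E(A)\leq 1+\Psi_\E(\Omega_\E(B)\oplus\Omega_\E(C))$. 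Choosing an $\E$-projective presentation of $B\oplus C$ as the direct sum of those for $B$ and $C$ realizes $\Omega_\E(B\oplus C)$ as $\Omega_\E(B)\oplus\Omega_\E(C)$ up to projective summands, finishing (b).

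I expect the only delicate point to be the bookkeeping of $\E$-projective summands, since the $\E$-syzygy is only defined up to such summands; this is systematically handled by Proposition \ref{propsi}(d) together with the additivity of $\Omega_\E$ on $\uC$ (Proposition \ref{syzygyaditivo}). The pullback construction itself is routine in an exact category and makes no use of the Fitting's property; that hypothesis enters only through the appeal to Theorem \ref{desigualdad}.
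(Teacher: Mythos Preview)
Your proposal is correct and follows essentially the same route as the paper: both construct the auxiliary $\E$-exact sequences via pullbacks (the paper cites \cite[Proposition~2.12]{Buhler} directly, while you spell the construction out), then apply Theorem~\ref{desigualdad} and clean up the $\E$-projective summands with Proposition~\ref{propsi}(d). The only cosmetic difference is in (b): the paper iterates the part-(a) construction on the sequence $\Omega_\E(C)\rightarrowtail A\oplus P\twoheadrightarrow B$ to obtain $\Omega_\E(B)\rightarrowtail \Omega_\E(C)\oplus Q\twoheadrightarrow A\oplus P$, whereas you pull back directly along $A\rightarrowtail B$ and invoke Schanuel's Lemma; both yield the same sequence up to $\E$-projective summands.
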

\begin{dem} Let $A\rightarrowtail B\twoheadrightarrow C$ be an $\E$-exact sequence. Then, using that 
$\C$ has enough $\E$-projectives, an easy application of \cite[Proposition 2.12]{Buhler}, gives us the existence 
of two $\E$-exact sequences $\Omega_\E(C)\rightarrowtail A\oplus P\twoheadrightarrow B$ and 
$\Omega_\E(B)\rightarrowtail \Omega_\E(C)\oplus Q\twoheadrightarrow A\oplus P,$ for some 
$P,Q\in\Q(\E).$ Thus, from Theorem \ref{desigualdad} and Proposition \ref{propsi} (d), we get the result.
\end{dem}

\section{Relative Igusa-Todorov dimensions}

In this section we fix an exact $IT$-context $(\C,\E),$ and for an abelian skeletally small and Krull-Schmidt category $\C,$ we 
shall consider an exact $IT$-context of the form $(\C,\E_F),$ for some $F:=F_\X,$ where $\X$ is a  class of objects in $\C.$ In 
each of these cases, we have the relative $ IT$-functions: $\Phi_\E$ and $\Psi_\E.$

\begin{defi}  Let   $\Y$ be any class of objects in $\C.$ The $\E$-relative $\Phi$-dimension of $\Y$ is 
$\Fiedim\,(\Y):=\mathrm{sup}\{\Phi_\E\,(Y)\;:\; Y\in\Y\}.$ Similarly, the $\E$-relative $\Psi$-dimension of $\Y$ is 
$\Psiedim\,(\Y):=\mathrm{sup}\{\Psi_\E\,(Y)\;:\; Y\in\Y\}.$
\end{defi}

The following inequalities are easily obtained from the definition of the relative $ IT$-functions and from the main properties given in 
the Propositions \ref{basicasdefi},  \ref{propsfi}  and \ref{propsi}.

\begin{remark}\label{primeras} For any class $\Y$ of objects, in an exact IT context $(\C,\E),$  the following inequalities hold true.
\begin{itemize}
\item[(a)] $\fpd_\E(\Y)\leq \Fiedim\,(\Y)\leq \Psiedim\,(\Y)\leq\pd_\E(\Y).$
\item[(b)] $\Psiedim\,(\Y)\leq\Fiedim\,(\Y)+\fpd_\E(\Y).$
\item[(c)] $\Psiedim\,(\Y)\leq2\,\Fiedim\,(\Y).$
\end{itemize}
\end{remark}

 In the case of the length-category $\C:=\modu\,(\Lambda),$  for some artin algebra $\Lambda,$ 
 and the exact structure $\E_F,$ for $F=F_\X,$ the $\X$-relative $\Phi$-dimension of the algebra $\Lambda$ is $\Fixdim\,(\Lambda):=\Fixdim\,(\C).$ Similarly, we 
introduce the $\X$-relative $\Psi$-dimension $\Psixdim\,(\Lambda)$ of the algebra $\Lambda.$ 
\

We recall that a Frobenius  category is an exact category $(\C,\E)$ with enough projectives and injectives such that $\I(\E)=\Q(\E).$ In this case $\uC=\overline{\C}$ and the syzygy functor 
$\Omega_\E:\uC\to\uC$ is an equivalence with inverse the co-syzygy functor 
$\Omega^{-1}_\E=\Omega_{\E^{op}}.$
Note that if the Frobenius category $(\C,\E)$ is skeletally small and Krull-Schmidt, then $(\C,\E)$ is both an exact $ IT$-context and an exact $\mathrm{IT}^{op}$-context. Doing easy calculations, we can get the following result. For a more general one, we recommend the reader to see Theorem \ref{experp1.5}.

\begin{remark}\label{RkF1} Let  $(\C,\E)$ be a Frobenius skeletally small and Krull-Schmidt category. Then 
 $\fpd_\E(\C)=\mathrm{fid}_\E(\E)=\Psiedim(\C)=\Psi^\E\mathrm{dim}(\C)=0.$
\end{remark}

In what follows, we see that certain abelian Frobenius categories can be characterised, as follows, in terms of the relative Igusa-Todorov functions.

\begin{teo}\label{ThmF1} Let $\C$ be an  skeletally small  abelian Krull-Schmidt category, with 
enough projectives and injectives. Then $\C$ is a Frobenius category if and only if 
$\Phi_{\Q(\C)}\mathrm{dim}(\C)=\Phi^{\I(\C)}\mathrm{dim}(\C)=0.$
\end{teo}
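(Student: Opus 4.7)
The plan is as follows. The forward direction is immediate: if $\C$ is Frobenius, then Remark \ref{RkF1} applied to $(\C,\E)$ with $\E$ the class of all exact sequences gives $\Psi_\E\mathrm{dim}(\C)=\Psi^{\E}\mathrm{dim}(\C)=0$, and Remark \ref{primeras}(a) (and its dual) yields $\Phi\leq\Psi$ in both cases, so $\Phi_{\Q(\C)}\mathrm{dim}(\C)=\Phi^{\I(\C)}\mathrm{dim}(\C)=0$. All the work lies in the converse, which I prove by showing $\Q(\C)=\I(\C)$.

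Assume $\Phi_{\Q(\C)}\mathrm{dim}(\C)=\Phi^{\I(\C)}\mathrm{dim}(\C)=0$. Remark \ref{primeras}(a) together with Proposition \ref{basicasdefi}(a) immediately yields $\fpd(\C)=\fid(\C)=0$, so every object of finite projective dimension is projective and every object of finite injective dimension is injective. A second observation I will use is that the vanishing $\Phi_{\Q(\C)}\mathrm{dim}(\C)=0$ is equivalent to the syzygy map $\Omega:\K_\E(\C)\to\K_\E(\C)$ being an injective homomorphism of abelian groups, and dually $\Omega^{-1}:\K_{\E^{op}}(\C)\to\K_{\E^{op}}(\C)$ is injective: indeed, $\Phi_\E(M)=0$ says exactly that $\Omega$ is injective on $\langle M\rangle$, and every finitely generated subgroup of $\K_\E(\C)$ arises in this way.

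The heart of the argument is to prove $\I(\C)\subseteq\Q(\C)$. Assume, for contradiction, that there is an indecomposable $I\in\I(\C)\setminus\Q(\C)$. Using that $\C$ is Krull-Schmidt with enough projectives, pick a projective cover $Q\twoheadrightarrow I$; it is necessarily indecomposable, giving a short exact sequence $0\to\Omega I\to Q\to I\to 0$. Apply the dual of Lemma \ref{syzygysecs}: since $\Omega^{-1}(I)=0$, I obtain $\Omega^{-1}(\Omega I)\cong\Omega^{-1}(Q)\oplus I'$ for some $I'\in\I(\C)$, whence $\Omega^{-1}[\Omega I]=\Omega^{-1}[Q]$ in $\K_{\E^{op}}(\C)$, and injectivity of $\Omega^{-1}$ gives $[\Omega I]=[Q]$ in $\K_{\E^{op}}(\C)$. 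I now distinguish two cases. If $Q\in\I(\C)$, then $[Q]=0$, so $\Omega I\in\I(\C)$, the sequence splits, and indecomposability of $Q$ with $I\neq 0$ yields $I\cong Q\in\Q(\C)$, a contradiction. If $Q\notin\I(\C)$, then $[Q]$ is a free generator of $\K_{\E^{op}}(\C)$, so the identity $[\Omega I]=[Q]$ forces $\Omega I\cong Q\oplus I''$ for some injective $I''$. Restricting the monomorphism $\Omega I\hookrightarrow Q$ to the injective summand $I''$ produces a split mono $I''\hookrightarrow Q$, and the indecomposability of $Q$ then forces either $I''=0$ (whence $\Omega I=Q\in\Q(\C)$, so $\pd(I)\leq 1$, and then $I\in\Q(\C)$ by $\fpd(\C)=0$, a contradiction) or $I''\cong Q$ (whence $Q\in\I(\C)$, contradicting the subcase).

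A perfectly symmetric argument, applied to the injective envelope $P\hookrightarrow E(P)$ of an indecomposable projective $P\notin\I(\C)$ and now invoking the injectivity of $\Omega$ on $\K_\E(\C)$ together with $\fid(\C)=0$, yields $\Q(\C)\subseteq\I(\C)$. Hence $\Q(\C)=\I(\C)$ and $\C$ is Frobenius. I expect the delicate step to be extracting the decomposition $\Omega I\cong Q\oplus I''$ from $[\Omega I]=[Q]$ in $\K_{\E^{op}}(\C)$; this crucially uses that $\K_{\E^{op}}(\C)$ is the free abelian group on iso-classes of indecomposable non-injective objects, so that the non-injective parts of $\Omega I$ and $Q$ coincide up to isomorphism. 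The remainder is then a clean Krull-Schmidt bookkeeping argument.
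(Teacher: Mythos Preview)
Your overall strategy is sound and genuinely different from the paper's, but there is a concrete gap: the claim that the projective cover $Q\twoheadrightarrow I$ of an indecomposable object is ``necessarily indecomposable'' is false. Over the path algebra of $1\to 3\leftarrow 2$, the indecomposable injective $I_3$ has top $S_1\oplus S_2$, so its projective cover is $P_1\oplus P_2$. You use the indecomposability of $Q$ in both cases---in Case~1 to force $I\cong Q$ from the split sequence, and in Case~2 to deduce $I''=0$ or $I''\cong Q$---so the argument as written does not go through.

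The approach can be repaired without this assumption. Case~1 is easy: a split epimorphism which is right minimal is an isomorphism, so the projective-cover property alone gives $I\cong Q$. For Case~2, argue directly from $[\Omega I]=[Q]$ in $\K_{\E^{op}}(\C)$: by Krull--Schmidt the non-injective parts of $\Omega I$ and $Q$ coincide up to isomorphism, and since $Q$ is projective this non-injective part is projective. The injective part of $\Omega I$ embeds into $Q$ via $\Omega I\hookrightarrow Q$ and splits off, hence is also a summand of $Q$ and therefore projective. Thus $\Omega I\in\Q(\C)$, so $\pd(I)\le 1$, and $\fpd(\C)=0$ finishes. For comparison, the paper proves the dual inclusion $\Q(\C)\subseteq\I(\C)$: starting from $P\in\Q(\C)\setminus\I(\C)$, it constructs $0\to P\to I'\to C'\to 0$ with $I'$ injective and $C'$ free of projective summands, shows $I'\not\simeq C'$ in $\uC$ (using $\fid(\C)=0$), and then notes that $\Omega(I')\simeq\Omega(C')$ in $\uC$ because $P$ is projective, contradicting $\Phi_{\Q(\C)}\mathrm{dim}(\C)=0$. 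Your route via the injectivity of $\Omega^{-1}$ on $\K_{\E^{op}}(\C)$ is a nice alternative once the bookkeeping is fixed.
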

\begin{dem} The fact that Frobenius property implies the above equalities follows from Remark \ref{RkF1}.
\

Assume that $\Phi_{\Q(\C)}\mathrm{dim}(\C)=\Phi^{\I(\C)}\mathrm{dim}(\C)=0.$ We only proof that $\Q(\C)\subseteq\I(\C),$ since the other inclusion $\I(\C)\subseteq \Q(\C)$ can be obtained in a similar way.
\

Let $P\in\Q(\C).$ Suppose that $P\not\in\I(\C).$ We assert that there exists an exact sequence 
\begin{equation}\label{EF1}
\theta:\quad 0\to P\to I'\to C'\to 0
\end{equation}
such that $I'\in\I(\C)$ and $C'$ does not have non-zero projective direct summands. Indeed, since 
$\C$ has enough injectives there is an exact sequence $0\to P\to I\stackrel{\beta}{\to}C\to 0$ with 
$I\in\I(\C).$ Note that $C\not\in\Q(\C),$ since $P\not\in\I(\C).$ By the Krull-Schmidt property on $\C,$ we get 
a decomposition $C=P'\oplus C',$ where $P'\in\Q(\C)$ and  $C'$ does not have non-zero projective direct summands. Let $i_1:P'\to C$ and $\pi_1:C\to P'$ be, respectively, the natural inclusion and projection defined by the coproduct $C=P'\oplus C'.$ Since $P'$ is projective, there is a morphism 
$\alpha:P'\to I$ such that $i_1=\beta\alpha$ and hence $(\pi_1\beta)\alpha=1_{P'}.$ This fact implies 
that $\alpha:P'\to I$ is an split-mono and so we get a decomposition $I=P'\oplus I'$ in such a way 
that $\beta=\begin{pmatrix}
1 & 0\\ 0 & \beta'
\end{pmatrix}:P'\oplus I'\to P'\oplus C'.$ Thus, we get that $0\to P\to I'\stackrel{\beta'}{\to}C'\to 0$ is the desired exact sequence as in (\ref{EF1}).
\

We assert now that 
\begin{equation}\label{EF2}
I'\not\simeq C' \text{ in the stable category }\uC.
\end{equation}
Indeed, suppose that $I'\simeq C'$  in the stable category $\uC.$ Then, from \cite[Proposition 1.44]{AB}, there are projective objects $Q$ and $Q'$ such that $I'\oplus Q\simeq C'\oplus Q'$ in $\C.$ Hence, using that $\C$ is a Krull-Schmidt category, it follows that $C'|I'$ and thus $C'$ is injective. Therefore 
the exact sequence given in (\ref{EF1}) says us that $\id\,(P)=1,$ contradicting that $\Phi^{\I(\C)}(P)=0;$ proving then the assertion above.
\

Now, we apply Horseshoe's Lemma (see Lemma \ref{horseshoe}) to the exact sequence in (\ref{EF1}). Then, we get that $\Omega(I')\simeq\Omega(C')$ in the stable category $\uC.$ Therefore, by using that 
$I'\not\simeq C'$ in $\uC,$ it follows that $\Phi_{\Q(\C)}(I'\oplus C')\geq 1,$ contradicting the 
fact that $\Phi_{\Q(\C)}\mathrm{dim}(\C)=0.$ So, we have that $P$ has to be injective.
\end{dem}

Let now $\C$ be an abelian category. For any classes of objects $\X\subseteq\X'\subseteq\C,$  we get exact categories $(\C,\E_F)$ and $(\C,\E_{F'}),$ with $F:=F_{\X}$ and $F':=F_{\X'}.$  Observe that $F'(A,B)\subseteq F(A,B)$ for any $A,B\in\C,$ and thus 
 $\E_{F'}\subseteq \E_F.$ Therefore, it is quite natural to consider exact categories $(\C,\E')$ and $(\C,\E)$ such  that 
 $\E'\subseteq\E.$ As we can se below, these special pairs of exact categories  have nice homological properties when we are interested in computing relative projective dimensions. The following result illustrates this point of view. 

\begin{pro}\label{clasico3} Let $(\C,\E')$ and $(\C,\E)$ be exact categories, with enough $\E$-projectives and $\E'$-projectives, such that $\E'\subseteq\E$ and $\Q(\E')\subseteq\mathcal{P}^{<\infty}_{\E}(\C).$ Then, the following statements hold true.
\begin{itemize}
\item[(a)] If $\pd_{\E'}(\C)$ is finite, then $\mathcal{P}^{<\infty}_{\E}(\C)=\C.$
\item[(b)] $\pd_\E(\C)\leq \pd_{\E'}(\C)+\pd_\E(\Q(\E')).$
\end{itemize}
\end{pro}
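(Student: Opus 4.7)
The plan is to use a finite $\E'$-projective resolution of any object $M\in\C$ and ``transport'' it to the $\E$-context, exploiting the inclusion $\E'\subseteq\E$. The key observation is that any $\E'$-exact sequence is also $\E$-exact, so any $\E'$-acyclic complex is automatically $\E$-acyclic. In particular, using Lemma \ref{primeraI}, for each $M$ with $\pd_{\E'}(M)\leq n$ we obtain an $\E$-acyclic complex
$$0\to P'_n\rightarrowtail P'_{n-1}\to\cdots\to P'_1\to P'_0\twoheadrightarrow M\to 0$$
with each $P'_i\in\Q(\E')\subseteq\mathcal{P}^{<\infty}_{\E}(\C)$. Decompose this complex into short $\E$-exact sequences $K_{i+1}\rightarrowtail P'_i\twoheadrightarrow K_i$ (with $K_0=M$ and $K_{n-1}=P'_n$ as an admissible subobject of $P'_{n-1}$). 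The $\Ext^{\bullet}_\E$ long exact sequence associated to each of these short sequences (available thanks to Horseshoe's Lemma, as recorded after Lemma \ref{comparison}) is the main technical tool.

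For (a), I would argue by downward induction on $i$ that each syzygy $K_i$ satisfies $\pd_\E(K_i)<\infty$. The base case is $K_{n-1}$: from the $\E$-exact sequence $P'_n\rightarrowtail P'_{n-1}\twoheadrightarrow K_{n-1}$ and the long $\Ext^\bullet_\E$-sequence one gets $\pd_\E(K_{n-1})<\infty$, since $P'_n,P'_{n-1}$ both have finite $\E$-projective dimension. The inductive step uses the standard consequence of the long $\Ext^\bullet_\E$-sequence: in any $\E$-exact sequence $A\rightarrowtail B\twoheadrightarrow C$ with $\pd_\E(A),\pd_\E(B)<\infty$ one has $\pd_\E(C)<\infty$. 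Applied to $K_{i+1}\rightarrowtail P'_i\twoheadrightarrow K_i$, this yields $\pd_\E(K_i)<\infty$ and eventually $\pd_\E(M)=\pd_\E(K_0)<\infty$, proving $\mathcal{P}^{<\infty}_{\E}(\C)=\C$.

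For (b), I would run the same argument but tracking the quantitative bound. Set $n:=\pd_{\E'}(\C)$ and $m:=\pd_\E(\Q(\E'))$; if either is infinite, the inequality is trivial, so assume both finite. The standard refinement of the long $\Ext^\bullet_\E$-sequence gives, for any $\E$-exact $A\rightarrowtail B\twoheadrightarrow C$,
$$\pd_\E(C)\leq\max\{\pd_\E(A)+1,\;\pd_\E(B)\}.$$
Applied inductively to $K_{i+1}\rightarrowtail P'_i\twoheadrightarrow K_i$ (with $\pd_\E(P'_i)\leq m$), starting from $\pd_\E(K_{n-1})\leq\max\{m+1,m\}=m+1$, a downward induction yields $\pd_\E(K_i)\leq m+(n-i)$. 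Specializing to $i=0$ gives $\pd_\E(M)\leq n+m$ for every $M\in\C$, whence $\pd_\E(\C)\leq \pd_{\E'}(\C)+\pd_\E(\Q(\E'))$.

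The most delicate point is verifying that the long $\Ext^\bullet_\E$-sequence is legitimately available at each stage of the induction; this is precisely where the inclusion $\E'\subseteq\E$ is used, since it guarantees that the short sequences $K_{i+1}\rightarrowtail P'_i\twoheadrightarrow K_i$ coming from decomposing the $\E'$-acyclic resolution of $M$ all lie in $\E$. Beyond this, the proof is a routine induction, with (a) being essentially the qualitative shadow of the quantitative estimate in (b).
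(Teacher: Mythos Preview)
Your proof is correct and follows essentially the same approach as the paper: take a finite $\E'$-projective resolution, view it as an $\E$-acyclic complex via $\E'\subseteq\E$, decompose into short $\E$-exact sequences, and use the long exact sequence in $\Ext^\bullet_\E$ together with $\Q(\E')\subseteq\mathcal{P}^{<\infty}_\E(\C)$ to bound $\pd_\E(M)$. The paper compresses your downward induction into the single inequality $\pd_\E(Z)\leq k+\max_{0\leq i\leq k}\pd_\E(P'_i)$, but the content is the same (and your parenthetical ``$K_{n-1}=P'_n$'' is an indexing slip---you mean $K_n=P'_n$---which does not affect the argument).
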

\begin{dem}  Assume that $n:=\pd_{\E'}(\C)$ is finite and let $Z\in\C.$ Since $\pd_{\E'}(Z)\leq n,$  we get an $\E'$-acyclic complex 
$$\eta:\quad \Omega_{\E'}^k(Z)\rightarrowtail P'_{k-1}\to\cdots\to P'_0\twoheadrightarrow Z,$$
where $k\leq n,$ $P'_k:=\Omega_{\E'}^k(Z)\in\Q(\E'),$ $P'_i\in\Q(\E'),$  and 
$\Omega_{\E'}^{i}(Z)\rightarrowtail P'_{i-1}\twoheadrightarrow\Omega_{\E'}^{i-1}(Z)$ is $\E'$-exact for all $1\leq i\leq k.$ Note that $\eta$ is 
 also an $\E$-acyclic complex. Using now that 
$\Q(\E')\subseteq \mathcal{P}^{<\infty}_{\E}(\C),$ it can be seen that $\Omega_{\E'}^{t}(Z)\in \mathcal{P}^{<\infty}_{\E}(\C)$ for any 
$t\in\N.$ Furthermore, from the $\E$-acyclic complex $\eta,$ it follows that 
$$\pd_\E(Z)\leq k+\max_{0\leq i\leq k}\,\pd_\E(P'_i).$$

Thus, we get $\pd_\E(Z)\leq n+\pd_\E(\Q(\E')),$ proving the result.
\end{dem}

\begin{defi}\label{DefIT-triple} A {\bf triple $ IT$-context} is a triple $(\C,\E',\E)$ such that $(\C,\E')$ and $(\C,\E)$
 are exact $ IT$-contexts with $\E'\subseteq\E.$ Moreover, in case $\C$ satisfies also the Fitting's property, we 
 say that $(\C,\E',\E)$ is a {\bf strong triple $ IT$-context}.
\end{defi}

In what follows, for given $\Y$ and $\mathcal{Z}$ classes of objects in $\C,$ we denote by $\Y\oplus\mathcal{Z}$ the 
class of objects $W$ in $\C$ such that $W=Y\oplus Z,$ for some $Y\in\Y$ and $Z\in\mathcal{Z}.$ Now, we illustrate several connections 
between triple $ IT$-contexts and relative $ IT$-functions.

\begin{teo}\label{clasico1} Let $(\C,\E',\E)$ be a strong triple $ IT$-context and $s\in\N.$ Then
$$\fpd_\E(\C)\leq 1+s+\Psiedim\,(\Q(\E')\oplus\Omega_{\E'}\Omega^{s}_\E(\mathcal{P}^{<\infty}_{\E}(\C))).$$
\end{teo}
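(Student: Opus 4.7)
\begin{dem}
The plan is to bound $\pd_\E(M)$ for an arbitrary $M\in\mathcal{P}^{<\infty}_{\E}(\C)$ by reducing to an $\E$-exact sequence to which Theorem \ref{desigualdad} can be applied, and then take the supremum over such $M$.

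\medskip

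First I fix $M\in\mathcal{P}^{<\infty}_{\E}(\C)$ and form an iterated $\E$-syzygy $N:=\Omega_\E^{s}(M)$ using the fact that $(\C,\E)$ has enough $\E$-projectives. By Lemma \ref{primeraI} and the shifting argument we have $\pd_\E(M)\leq s+\pd_\E(N)$, and $\pd_\E(N)<\infty$ (it is either $\pd_\E(M)-s$ or $N\in\Q(\E)$). Next, since $(\C,\E')$ has enough $\E'$-projectives, I construct an $\E'$-exact sequence
$$K\rightarrowtail P'\twoheadrightarrow N,\qquad K:=\Omega_{\E'}(N),\; P'\in\Q(\E').$$
Because $\E'\subseteq\E$, this is also an $\E$-exact sequence.

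\medskip

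Now I apply Theorem \ref{desigualdad}. This requires $(\C,\E)$ to be a strong exact $IT$-context, which holds since by hypothesis $(\C,\E',\E)$ is a strong triple $IT$-context (so $\C$ has the Fitting property and $(\C,\E)$ is an exact $IT$-context). Since $\pd_\E(N)<\infty$, Theorem \ref{desigualdad} yields
$$\pd_\E(N)\leq 1+\Psi_\E(K\oplus P').$$
Note that $K\oplus P'=\Omega_{\E'}(\Omega_\E^{s}(M))\oplus P'$ belongs, by construction, to the class $\Q(\E')\oplus\Omega_{\E'}\Omega_\E^{s}(\mathcal{P}^{<\infty}_{\E}(\C))$, so
$$\Psi_\E(K\oplus P')\leq\Psiedim\bigl(\Q(\E')\oplus\Omega_{\E'}\Omega_\E^{s}(\mathcal{P}^{<\infty}_{\E}(\C))\bigr).$$
Combining with $\pd_\E(M)\leq s+\pd_\E(N)$ and taking supremum over $M\in\mathcal{P}^{<\infty}_{\E}(\C)$ gives the desired inequality.

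\medskip

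The main conceptual point (rather than obstacle) is that the hypothesis $\E'\subseteq\E$ lets one transport an $\E'$-projective presentation of an $\E$-syzygy into an $\E$-exact sequence suitable for Theorem \ref{desigualdad}; this explains why the class on the right combines $\Q(\E')$ with the $\E'$-syzygies of $\E$-syzygies. A minor care-point is the non-uniqueness of $\E$- and $\E'$-syzygies, but Schanuel's Lemma (Lemma \ref{schanuelL}) together with Proposition \ref{propsi}(d) ensures that both $\Psi_\E$ and $\pd_\E$ are insensitive to the particular choice, and the class on the right is to be interpreted as containing all such choices.
\end{dem}
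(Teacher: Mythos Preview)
Your proof is correct and follows essentially the same approach as the paper: take an $s$-fold $\E$-syzygy of $M\in\mathcal{P}^{<\infty}_{\E}(\C)$, cover it by an $\E'$-projective to obtain an $\E'$-exact (hence $\E$-exact) sequence, apply Theorem~\ref{desigualdad}, and combine with $\pd_\E(M)\leq s+\pd_\E(\Omega_\E^{s}(M))$. Your added remarks on Fitting's property and Schanuel's Lemma are accurate but not present in the paper's terser version.
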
 
\begin{dem} Let $Z\in \mathcal{P}^{<\infty}_{\E}(\C).$ Since $\C$ has enough $\E$-projectives we have an $\E$-acyclic complex
$$ \Omega^{s}_\E(Z)\rightarrowtail P_{s-1}\to\cdots \to P_1\to P_0\twoheadrightarrow Z,$$
where $P_i\in\Q(\E)$ for any $0\leq i\leq s-1.$ Using now that $\C$ has enough $\E'$-projectives 
we have an $\E'$-exact sequence
$$\eta:\; \Omega_{\E'}(\Omega^{s}_\E(Z))\rightarrowtail Q\twoheadrightarrow \Omega^{s}_\E(Z),$$
with $Q\in\Q(\E').$  But $\eta$ is also an $\E$-exact sequence, and thus by Theorem \ref{desigualdad}, we get that 
$$\pd_\E(\Omega^{s}_\E(Z))\leq 1+\Psi_\E\,(Q\oplus \Omega_{\E'}\Omega^{s}_\E(Z)).$$ 
Thus, the result follows from  the above inequality, since $\pd_\E(Z)\leq \pd_\E(\Omega^{s}_\E(Z)) +s.$
\end{dem}

The above theorem has several nice consequences as can be seen in the following corollaries. 

\begin{cor}\label{clasico2} Let $(\C,\E',\E)$ be a strong triple $ IT$-context and and $s\in\N.$ Then
$$\pd_{\E'}(\Omega^{s}_\E(\mathcal{P}^{<\infty}_{\E}(\C)))\leq 1\quad\Rightarrow\quad\fpd_{\E}(\C)\leq 1+s+\Psiedim\,(\Q(\E')).$$
\end{cor}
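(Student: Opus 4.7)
The plan is to chain together Theorem \ref{clasico1} with the hypothesis and a monotonicity observation for $\Psiedim$. First I would reduce the hypothesis via Lemma \ref{primeraI}: since $\pd_{\E'}(M)=\Omega_{\E'}\mathrm{dim}(M)$, the assumption $\pd_{\E'}(\Omega^{s}_\E(Z))\leq 1$ (for every $Z\in\mathcal{P}^{<\infty}_{\E}(\C)$) is equivalent to asserting that $\Omega_{\E'}\Omega^{s}_\E(Z)\in\Q(\E')$. Note that the $\E$-syzygy $\Omega^{s}_\E(Z)$ is only well-defined up to $\E$-projective summands, but by Schanuel's Lemma (Lemma \ref{schanuelL}) any two choices differ by a summand in $\Q(\E)\subseteq \Q(\E')$ (the inclusion $\Q(\E)\subseteq \Q(\E')$ follows from $\E'\subseteq \E$), so $\pd_{\E'}(\Omega^{s}_\E(Z))$ is well-defined.

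Next, I would observe that the hypothesis forces the class inclusion
\[
\Q(\E')\oplus\Omega_{\E'}\Omega^{s}_\E(\mathcal{P}^{<\infty}_{\E}(\C))\;\subseteq\;\Q(\E'),
\]
since $\Q(\E')$ is closed under finite coproducts. By the elementary monotonicity of the $\E$-relative $\Psi$-dimension with respect to class inclusion, this gives
\[
\Psiedim\!\left(\Q(\E')\oplus\Omega_{\E'}\Omega^{s}_\E(\mathcal{P}^{<\infty}_{\E}(\C))\right)\;\leq\;\Psiedim(\Q(\E')).
\]

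Finally I would apply Theorem \ref{clasico1} directly: its conclusion yields
\[
\fpd_{\E}(\C)\;\leq\;1+s+\Psiedim\!\left(\Q(\E')\oplus\Omega_{\E'}\Omega^{s}_\E(\mathcal{P}^{<\infty}_{\E}(\C))\right),
\]
and combining this with the previous inequality gives the desired bound $\fpd_{\E}(\C)\leq 1+s+\Psiedim(\Q(\E'))$. There is no real obstacle here, since the result is essentially a clean specialisation of Theorem \ref{clasico1}; the only subtle point to verify is the well-definedness of $\pd_{\E'}(\Omega^{s}_\E(Z))$ via the compatibility between the two exact structures, which is handled by $\Q(\E)\subseteq \Q(\E')$ and Schanuel's Lemma.
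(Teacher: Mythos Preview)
Your proof is correct and follows essentially the same route as the paper: from the hypothesis one deduces $\Omega_{\E'}\Omega^{s}_\E(\mathcal{P}^{<\infty}_{\E}(\C))\subseteq\Q(\E')$, and then Theorem \ref{clasico1} gives the bound directly. Your additional care about well-definedness of $\pd_{\E'}(\Omega^{s}_\E(Z))$ and the explicit monotonicity of $\Psiedim$ are harmless elaborations that the paper leaves implicit.
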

\begin{dem}  Let $\pd_{\E'}(\Omega^{s}_\E(\mathcal{P}^{<\infty}_{\E}(\C)))\leq 1.$ In this case, we have that $\Omega_{\E'}\Omega^{s}_\E(\mathcal{P}^{<\infty}_{\E}(\C))\subseteq\Q(\E')$ 
and thus the result follows from Theorem \ref{clasico1}. 
\end{dem}

\begin{cor}\label{rkbasico1} Let $\Lambda$ be an artin algebra, and let $\X\subseteq\X'\subseteq\modu\,(\Lambda)$ be  precovering  and generator classes in $\modu\,(\Lambda).$ Then, for $F:=F_\X$ and   $F':=F_{\X'},$ we have that
$$\gldim_{F'}(\Lambda)\leq 1\;\Rightarrow\;\findim_{F}(\Lambda)\leq\Psixdim\,(\X')+1.$$
\end{cor}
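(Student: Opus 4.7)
The plan is to obtain this as a direct instance of Corollary \ref{clasico2} applied, with $s=0$, to the triple $(\modu(\Lambda),\E_{F'},\E_F)$.

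First, I would verify that $(\modu(\Lambda),\E_{F'},\E_F)$ is a strong triple $IT$-context in the sense of Definition \ref{DefIT-triple}. Since $\X$ and $\X'$ are precovering generator classes, Remark \ref{XITCon} ensures that both $(\modu(\Lambda),\E_F)$ and $(\modu(\Lambda),\E_{F'})$ are exact $IT$-contexts. The inclusion $\E_{F'}\subseteq\E_F$ follows directly from $\X\subseteq\X'$: if $0\to A\to B\to C\to 0$ is $F'$-exact then the induced $\Hom(-,B)\to\Hom(-,C)$ is surjective when restricted to $\X'$, hence also when restricted to the subclass $\X$, so the sequence is $F$-exact. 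Finally, $\modu(\Lambda)$ is a length-category over an artin algebra and therefore satisfies the Fitting property, so the triple is strong.

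Second, I would verify the hypothesis of Corollary \ref{clasico2} with $s=0$, which reads $\pd_{\E_{F'}}(\mathcal{P}^{<\infty}_{\E_F}(\C))\leq 1$. Since $\gldim_{F'}(\Lambda)=\pd_{F'}(\modu(\Lambda))\leq 1$ and $\mathcal{P}^{<\infty}_{\E_F}(\C)\subseteq\modu(\Lambda)$, this is immediate.

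Third, I invoke Corollary \ref{clasico2} to conclude that $\fpd_{\E_F}(\C)\leq 1+\Psi_{\E_F}\mathrm{dim}(\Q(\E_{F'}))$. It remains to rewrite both sides in the notation of the statement: by Proposition \ref{projExcat} one has $\Q(\E_{F'})=\X'$; by definition $\fpd_{\E_F}(\C)=\fpd_F(\modu(\Lambda))=\findim_F(\Lambda)$; and for $\E=\E_{F_\X}$ the dimension $\Psiedim$ coincides with $\Psi_\X\mathrm{dim}=\Psixdim$. Assembling these gives $\findim_F(\Lambda)\leq\Psixdim(\X')+1$. There is no real obstacle in this argument; the only care needed is in the bookkeeping of notational conventions (the identification $\Q(\E_{F_\Y})=\Y$ for precovering generators $\Y$, the coincidence $\Psiedim=\Psixdim$ under the exact structure $\E_F$ with $F=F_\X$, and the transfer of the $\gldim_{F'}$ bound to the required bound on the finite-$\pd_F$ class).
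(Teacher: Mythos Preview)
Your proposal is correct and follows exactly the same route as the paper's own proof: apply Corollary \ref{clasico2} with $s=0$ to the strong triple $IT$-context $(\modu(\Lambda),\E_{F'},\E_F)$, then identify $\Q(\E_{F'})=\X'$ and unwind the notation. You have simply spelled out in detail the verifications (the inclusion $\E_{F'}\subseteq\E_F$, the Fitting property for $\modu(\Lambda)$, and the passage from $\gldim_{F'}(\Lambda)\leq 1$ to the hypothesis of Corollary \ref{clasico2}) that the paper leaves implicit.
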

\begin{dem} It is clear that $(\modu\,(\Lambda),\E_{F'}, \E_F)$ is a  strong triple $ IT$-context. Therefore the result is  a direct consequence of Corollary \ref{clasico2}, by taking $s=0.$
\end{dem}

We recall that the {\bf representation dimension} 
$\repdim\,(\Lambda),$ of an artin algebra $\Lambda,$ is the minimal global dimension $\gldim\,\End_\Lambda(M)$ 
over all $\Lambda$-modules $M,$ which are generator and cogenerator in $\modu\,(\Lambda).$ This dimension  was introduced by M. Auslander in \cite{A2}. An object $M\in\modu\,(\Lambda),$ realising the minimum above, is called Auslander generator.
 
It  was proved by O. Iyama in \cite{Iyama}, that 
$\repdim\,(\Lambda)$ is finite. It is also well known, as can be seen  in 
\cite{O},  that for  an Auslander generator $M\in\modu\,(\Lambda),$ it follows that 
$$\gldim_{G}(\Lambda)=\repdim\,(\Lambda)-2,$$
where $G:=F_{\add\,(M)}.$

As a consequence of the above corollary, we  obtain the following main result appearing in \cite{IT}. 

\begin{cor}\label{rkbasico1.1} Let $\Lambda$ be an artin algebra, and let $M$ be an Auslander generator in $\modu\,(\Lambda).$ Then 
$$\repdim\,(\Lambda)\leq 3\;\Rightarrow\;\findim\,(\Lambda)\leq\Psi\,(M)+1<\infty.$$
\end{cor}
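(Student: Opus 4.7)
The plan is to realise this as a direct instance of Corollary~\ref{rkbasico1}. Set $\X:=\proj\,(\Lambda)$ and $\X':=\add\,(M)$. Since $M$ is an Auslander generator, it is a generator of $\modu\,(\Lambda)$, so $\proj\,(\Lambda)\subseteq\add\,(M)$ and both classes are functorially finite (hence in particular precovering) generators. With $F:=F_{\X}$, the induced additive subfunctor is the whole $\Ext^{1}_{\C}(-,-)$, so $\E_F$ is the class of all exact sequences; therefore $\findim_F(\Lambda)=\findim\,(\Lambda)$ and $\Psi_{\X}=\Psi$, the classical Igusa–Todorov function. With $F':=F_{\X'}$, the cited result from \cite{O} gives $\gldim_{F'}(\Lambda)=\repdim\,(\Lambda)-2$, so the hypothesis $\repdim\,(\Lambda)\leq 3$ translates directly into $\gldim_{F'}(\Lambda)\leq 1$.

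Applying Corollary~\ref{rkbasico1} one obtains
\[
\findim\,(\Lambda)\;\leq\;\Psixdim\,(\add\,(M))+1.
\]
The remaining step is to show that $\Psixdim\,(\add\,(M))=\sup\{\Psi(Y)\,:\,Y\in\add\,(M)\}$ equals $\Psi(M)$. Since $M\in\add\,(M)$, the inequality $\Psi(M)\leq\Psixdim\,(\add\,(M))$ is immediate. Conversely, for any $Y\in\add\,(M)$ we have $\add\,(Y\oplus M)=\add\,(M)$, so Proposition~\ref{propsi}(b) gives $\Psi(Y\oplus M)=\Psi(M)$, and Proposition~\ref{propsi}(f) yields $\Psi(Y)\leq\Psi(Y\oplus M)=\Psi(M)$. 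Combining both inequalities produces the desired identity, and the bound $\findim\,(\Lambda)\leq\Psi(M)+1$ follows. Finiteness is automatic because $\Psi$ takes values in $\N$.

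I do not anticipate any real obstacle: essentially everything is bookkeeping, since $\modu\,(\Lambda)$ is a length category and hence a strong abelian $IT$-context, which ensures that the exact $IT$-context hypotheses underlying Corollary~\ref{rkbasico1} are satisfied for both choices of $\X$. The only mildly delicate point is the identification $\Psixdim\,(\add\,(M))=\Psi(M)$, which is handled cleanly by the $\add$-invariance and monotonicity of $\Psi$ recorded in Proposition~\ref{propsi}.
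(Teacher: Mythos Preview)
Your proof is correct and follows essentially the same approach as the paper: set $\X=\proj(\Lambda)$, $\X'=\add(M)$, invoke Corollary~\ref{rkbasico1} via the identity $\gldim_{F'}(\Lambda)=\repdim(\Lambda)-2$, and reduce $\Psixdim(\add(M))$ to $\Psi(M)$ using Proposition~\ref{propsi}(b),(f). You simply spell out the last reduction in slightly more detail than the paper does.
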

\begin{dem} Take $\X:=\proj\,(\Lambda)$ and $\X':=\add\,(M).$ Now, by Proposition \ref{propsi} (b) and  (f), we have that 
$\Psixdim\,(\X')=\Psix\,(M).$ Thus, we get the result from Corollary \ref{rkbasico1}, since $\gldim_{F'}(\Lambda)=\repdim\,(\Lambda)-2.$
\end{dem}

\begin{remark}\label{rkbasico1.2} Let  $\Lambda$ be an artin algebra,  $M$ be an Auslander generator in $\modu\,(\Lambda)$   
 and $G:=F_{\add\,(M)}.$ Since $\gldim_{G}(\Lambda)=\repdim\,(\Lambda)-2<\infty,$ we get from Remark \ref{primeras} (a) the following  
$$\Psi_{\add\,(M)}\mathrm{dim}\,(\Lambda)=\repdim\,(\Lambda)-2.$$
Therefore, the representation dimension of an algebra is just a particular case of a relative $IT$-dimension.
\end{remark}

\begin{cor}\label{clasico4} Let $\Lambda$ be an artin algebra,  $\X$ be a functorially finite generator in $\modu\,(\Lambda)$ and
 $\X'(n):=\X\oplus\Omega^n_\X(\modu\,(\Lambda))$ for any $n\in\N.$ Then,  for all $s\in\N,$ we have 
$$\fpd_{F_\X}(\Lambda)\leq1+s+\Psi_\X\mathrm{dim}\,(\X'(n)\oplus\Omega_{\X'(n)}\Omega^{s}_\X(\Q^{<\infty}_{F_\X}(\Lambda))).$$
\end{cor}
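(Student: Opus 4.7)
The plan is to realise the statement as a direct application of Theorem \ref{clasico1}, by choosing an appropriate strong triple $IT$-context on $\C := \modu\,(\Lambda)$. I will set $\E := \E_{F_\X}$ and $\E' := \E_{F_{\X'(n)}}$, verify that $(\C, \E', \E)$ is a strong triple $IT$-context in the sense of Definition \ref{DefIT-triple}, and then translate the conclusion of Theorem \ref{clasico1} back into the notation of the statement.

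For the $IT$-context verification, $\modu\,(\Lambda)$ is a length-category, hence a skeletally small Krull-Schmidt abelian category satisfying Fitting's property. The functorial finiteness of $\X$, combined with Remark \ref{XITCon}, yields at once that $(\C, \E_{F_\X})$ is an exact $IT$-context with $\Q(\E_{F_\X}) = \X$. To show that $(\C, \E_{F_{\X'(n)}})$ is also an exact $IT$-context, it suffices (by Remark \ref{XITCon} and Proposition \ref{projExcat}) to check that $\X'(n) = \X \oplus \Omega^n_\X(\modu\,(\Lambda))$ is a precovering generator class. The generator property is immediate since $\X \subseteq \X'(n)$ and $\X$ already contains the projectives. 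For the precovering property, the functorial finiteness of $\X$ implies that the class $\Omega^n_\X(\modu\,(\Lambda))$ of $n$-th $\X$-syzygies is itself precovering (by iterating $\X$-precovers inside an $\E_{F_\X}$-projective resolution), and then the direct-sum class $\X \oplus \Omega^n_\X(\modu\,(\Lambda))$ is precovering via component-wise precovers. The inclusion $\E' \subseteq \E$ follows from $\X \subseteq \X'(n)$, since the surjectivity condition defining $F_{\X'(n)}$-exact sequences is strictly stronger than that for $F_\X$.

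With the strong triple $IT$-context in place, Theorem \ref{clasico1} yields
$$\fpd_\E(\C) \leq 1 + s + \Psiedim\bigl(\Q(\E') \oplus \Omega_{\E'}\Omega^s_\E(\mathcal{P}^{<\infty}_\E(\C))\bigr).$$
It remains to rewrite each ingredient in the statement's intrinsic notation: $\fpd_\E(\C) = \fpd_{F_\X}(\Lambda)$; $\Q(\E') = \X'(n)$ by Proposition \ref{projExcat}; $\Omega_{\E'} = \Omega_{\X'(n)}$ and $\Omega^s_\E = \Omega^s_\X$; $\mathcal{P}^{<\infty}_\E(\C) = \Q^{<\infty}_{F_\X}(\Lambda)$; and $\Psi_\E\mathrm{dim} = \Psi_\X\mathrm{dim}$. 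Substituting these identifications into the displayed inequality reproduces precisely the claimed bound.

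The main obstacle I anticipate is the verification that $\X'(n)$ is precovering, which ultimately rests on the preservation of contravariant finiteness under the $\X$-syzygy operation when $\X$ is functorially finite. This is standard within Auslander-Solberg-type relative approximation theory, but it is not recorded in the excerpt above; one must either cite such a reference or supply a short independent argument iterating the existence of $\X$-precovers and passing to kernels, together with the direct-sum construction of precovers for $\X \oplus \Omega^n_\X(\modu\,(\Lambda))$.
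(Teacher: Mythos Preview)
Your proposal is correct and follows essentially the same route as the paper: set up the strong triple $IT$-context $(\modu\,(\Lambda),\E_{F_{\X'(n)}},\E_{F_\X})$ and invoke Theorem~\ref{clasico1}. The one point you flagged as the main obstacle---that $\X'(n)=\X\oplus\Omega^n_\X(\modu\,(\Lambda))$ is a precovering generator class---is exactly where the paper spends its effort, and it disposes of it by citing \cite[Proposition~3.7]{AS1} rather than arguing directly; your sketch of an iterated-precover argument is a reasonable substitute, though note that the ``generator'' condition in this paper also requires $\add(\X'(n))=\X'(n)$, which is not quite as immediate as you state and is likewise handled by that reference.
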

\begin{dem} Since $\X$ is a functorially finite generator class in $\modu\,(\Lambda),$ we get by \cite[Proposition 3.7]{AS1} that 
$\X'(n)$  is a generator precovering class in $\modu\,(\Lambda).$ Hence, for 
$F:=F_\X$ and $F':=F_{\X'},$ the triple $(\modu\,(\Lambda),\E_{F'}, \E_F)$ is a  strong triple $ IT$-context. Therefore, the result follows from Theorem \ref{clasico1}.
\end{dem}

\begin{cor}\label{clasico5} Let $\Lambda$ be an artin algebra and
 $\X'(n):=\proj\,(\Lambda)\oplus\Omega^n(\modu\,(\Lambda))$ for any $n\in\N.$ Then, for any non-negative integer $s,$ we have 
$$\fpd(\Lambda)\leq1+s+\Psi\mathrm{dim}\,(\Omega^n(\modu\,(\Lambda))\oplus\Omega_{\X'(n)}\Omega^s(\Q^{<\infty}(\Lambda))).$$
\end{cor}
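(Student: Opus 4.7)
The plan is to view this as a direct specialisation of Corollary \ref{clasico4} with $\X:=\proj(\Lambda)$, and then clean up the right-hand side by absorbing the projective summand into $\Psi$.

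First, I would verify that $\X=\proj(\Lambda)$ satisfies the hypotheses of Corollary \ref{clasico4}. Since $\Lambda$ is an artin algebra, $\proj(\Lambda)$ is a functorially finite generator in $\modu(\Lambda)$; moreover, with $F:=F_{\proj(\Lambda)}$ we have $F=\Ext^{1}_{\C}(-,-)$, so all the $F$-relative invariants reduce to the ordinary ones: $\pd_F=\pd$, $\Omega_F=\Omega$, $\fpd_F(\Lambda)=\fpd(\Lambda)$, $\Psi_{\proj(\Lambda)}=\Psi$, and $\Q^{<\infty}_{F}(\Lambda)=\Q^{<\infty}(\Lambda)$. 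With these identifications, Corollary \ref{clasico4} yields, for every $s,n\in\N$,
\begin{equation*}
\fpd(\Lambda)\leq 1+s+\Psi\mathrm{dim}\bigl(\X'(n)\oplus\Omega_{\X'(n)}\Omega^{s}(\Q^{<\infty}(\Lambda))\bigr),
\end{equation*}
where $\X'(n)=\proj(\Lambda)\oplus\Omega^{n}(\modu(\Lambda))$ exactly as in the statement.

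To match the statement of Corollary \ref{clasico5}, I still need to remove the $\proj(\Lambda)$ summand appearing in the first copy of $\X'(n)$ inside the $\Psi$-dimension. For this I would invoke Proposition \ref{propsi}(d), which says that $\Psi(M\oplus P)=\Psi(M)$ whenever $P$ is projective. Concretely, any element of the class $\X'(n)\oplus\Omega_{\X'(n)}\Omega^{s}(\Q^{<\infty}(\Lambda))$ has the form $P\oplus\Omega^{n}(M)\oplus Z$ with $P\in\proj(\Lambda)$, $M\in\modu(\Lambda)$ and $Z\in\Omega_{\X'(n)}\Omega^{s}(\Q^{<\infty}(\Lambda))$; by Proposition \ref{propsi}(d) its $\Psi$-value equals $\Psi(\Omega^{n}(M)\oplus Z)$. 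Taking the supremum gives
\begin{equation*}
\Psi\mathrm{dim}\bigl(\X'(n)\oplus\Omega_{\X'(n)}\Omega^{s}(\Q^{<\infty}(\Lambda))\bigr)=\Psi\mathrm{dim}\bigl(\Omega^{n}(\modu(\Lambda))\oplus\Omega_{\X'(n)}\Omega^{s}(\Q^{<\infty}(\Lambda))\bigr),
\end{equation*}
which inserted into the previous inequality delivers the desired bound.

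There is essentially no technical obstacle here: the corollary is an almost immediate specialisation. The only point that needs care is the bookkeeping between the relative setting (the triple IT-context that underlies Corollary \ref{clasico4}, with $F_\X=\Ext^{1}$) and the classical one, together with the absorption of projectives into $\Psi$. Everything else is a renaming of symbols.
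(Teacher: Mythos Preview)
Your proof is correct and follows exactly the paper's approach: specialise Corollary \ref{clasico4} to $\X=\proj(\Lambda)$ and then apply Proposition \ref{propsi}(d) to drop the projective summand from the $\Psi$-dimension. The paper's own proof is the one-line version of what you wrote.
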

\begin{dem} It follows from Corollary \ref{clasico4} by taking $\X.=\proj\,(\Lambda)$ and using Proposition \ref{propsi} (d).
\end{dem}

\section{Relative syzygy functors on $ IT$-contexts}

Let $(\C,\E)$ be an exact category with enough projectives. For any $\Y\subseteq \C,$ we consider the left $\E$-perpendicular  full subcategory ${}^{{}_\E\perp}\Y$ of $\C,$ whose objects are all 
$M\in\C$ such that $\Ext_\E^i(M,Y)=0$ for any $Y\in\Y$ and $i\geq 1.$ 

% OJO: un resultado relacionado con el siguiente teorema esta en el paper de Auslander-Bridger en la pag. 88, Prop. 2.43

\begin{teo}\label{xperp}
Let $(\C,\E)$ be an exact  category with enough projectives, which is weakly idempotent complete, and $\A:={}^{{}_\E\perp}\Q(\E).$ Then, the restriction $\Omega_\E:\underline{\A}\rightarrow\underline{\A},$ of the syzygy functor 
$\Omega_\E:\uC\rightarrow\uC,$  is full and faithful.
\end{teo}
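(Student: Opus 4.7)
The plan is to verify that $\Omega_\E$ restricts to $\underline{\A}$, and then to establish faithfulness and fullness by exploiting the $\Ext$-vanishing defining $\A$ to lift morphisms along the admissible monic $\Omega_\E(M)\rightarrowtail P$. First, for any $M\in\A$ and $Q\in\Q(\E)$, the long exact sequence of $\Ext^{*}_\E(-,Q)$ associated with $\Omega_\E(M)\rightarrowtail P\twoheadrightarrow M$ gives the dimension-shifting isomorphism $\Ext_\E^{i}(\Omega_\E(M),Q)\cong\Ext_\E^{i+1}(M,Q)=0$ for all $i\geq 1$, so $\Omega_\E(M)\in\A$; its class in $\underline{\A}$ is independent of the chosen $P$ by Proposition~\ref{indenelcociente}(b).

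For faithfulness, fix $M,N\in\A$ with chosen $\E$-exact sequences $\Omega_\E(M)\stackrel{i}{\rightarrowtail}P\stackrel{\mu}{\twoheadrightarrow}M$ and $\Omega_\E(N)\stackrel{i'}{\rightarrowtail}Q\stackrel{\mu'}{\twoheadrightarrow}N$, $P,Q\in\Q(\E)$, and let $f\colon M\to N$ have syzygy lifts $(f_0,f_1)$ as in Proposition~\ref{syzygyaditivo}, with $f_1=\beta\alpha$ factoring through some $R\in\Q(\E)$. The crucial step is to extend $\alpha\colon\Omega_\E(M)\to R$ along $i$: since $M\in\A$ gives $\Ext_\E^{1}(M,R)=0$, the induced map $\Hom_\C(P,R)\to\Hom_\C(\Omega_\E(M),R)$ is surjective, yielding some $\tilde\alpha\colon P\to R$ with $\tilde\alpha i=\alpha$. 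Then $(f_0-i'\beta\tilde\alpha)\,i=i'f_1-i'\beta\alpha=0$, and the cokernel property of $\mu$ produces $h\colon M\to Q$ with $h\mu=f_0-i'\beta\tilde\alpha$. Post-composing with $\mu'$ and cancelling the admissible epic $\mu$ gives $f=\mu'h\in\Fact_{\Q(\E)}(M,N)$, so $[f]=0$ in $\underline{\A}$.

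Fullness uses the same extension trick constructively. Given any $g\colon\Omega_\E(M)\to\Omega_\E(N)$, the vanishing $\Ext_\E^{1}(M,Q)=0$ lets us extend $i'g\colon\Omega_\E(M)\to Q$ to some $f_0\colon P\to Q$ with $f_0 i=i'g$. Then $\mu'f_0$ annihilates $i$, so the cokernel property of $\mu$ supplies a unique $f\colon M\to N$ with $f\mu=\mu'f_0$. The pair $(f_0,f)$ then fits the syzygy-lifting diagram of $f$, and since $i'$ is monic, the corresponding $f_1$ equals $g$ on the nose; hence $\Omega_\E([f])=[g]$ in $\underline{\A}$. The main obstacle is conceptual rather than computational: one has to recognise that membership of $M$ in ${}^{{}_\E\perp}\Q(\E)$ is precisely what forces every $\E$-projective $R$ to behave injectively relative to admissible monics with cokernel $M$, which is what delivers the required lifts. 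The weak-idempotent-completeness hypothesis plays no direct role in the argument above; it is present to guarantee the standard closure properties of $\Q(\E)$ and the well-behavedness of the stable-category formalism of Section~1.
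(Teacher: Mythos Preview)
Your proof is correct and takes a more direct route than the paper's. Both arguments rest on the same key fact---that $\Ext_\E^{1}(M,R)=0$ for $M\in\A$ and $R\in\Q(\E)$ lets one extend any map $\Omega_\E(M)\to R$ along the admissible monic $i$---but the paper packages this through pushout and pullback diagrams: for faithfulness it factors $(f_1,f_0,f)$ through $\varepsilon f$ and then shows $\varepsilon f=\mu(\lambda\eta)$ splits via the pushout $\lambda\eta$; for fullness it forms the pushout $g\eta$, then the pushout $\alpha_1'\varepsilon$, invokes the Snake Lemma (B\"uhler, Proposition~8.11) to obtain a $3\times 3$ diagram, reads off a splitting, and extracts a morphism $-t_2$ with $\Omega_\E([-t_2])=[g]$. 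Your argument bypasses all of this machinery: you extend $\alpha$ (respectively $i'g$) directly via the surjectivity of $\Hom_\C(P,R)\to\Hom_\C(\Omega_\E(M),R)$, and then a single cokernel factorisation finishes each half. This is cleaner and, as you observe, makes no use of weak idempotent completeness; the paper's route appears to need that hypothesis only insofar as B\"uhler's Snake Lemma does. The trade-off is that the paper's diagrammatic approach is closer in spirit to the classical treatment in Auslander--Bridger, while yours is a straight homological-algebra computation.
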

\begin{proof} For any $C\in\C,$ we fix an admissible epic $P_0(C)\twoheadrightarrow C;$ and thus we 
get the canonical $\E$-exact sequence $\Omega_\E(C)\rightarrowtail P_0(C)\twoheadrightarrow C.$
\

We start by proving that $\Omega_\E(\A)\subseteq \A.$ Indeed, let $A\in\A$ and $P\in\Q(\E).$ Then, by 
applying the functor $\Hom_\C(-,P)$ to the canonical $\E$- exact sequence $\Omega_\E(A)\rightarrowtail P_0(A)\twoheadrightarrow A,$ we get the exact sequence
$$\Ext_\E^i(A,P)\rightarrow\Ext_\E^i(P_0(A),P)\rightarrow \Ext_\E^i(\Omega_\E(A),P)\rightarrow \Ext_\E^{i+1}(A,P)$$
Since $\Ext_\E^i(A,P)=0,$ for all $i\geq 1,$ we get that 
$$\Ext_\E^i(\Omega_\E(A),P)\simeq\Ext_\E^i(P_0(A),P)=0\quad \forall\, i\geq 1,\;\forall \,P\in\Q(\E),$$
and thus $\Omega_\E(A)\in{}^{{}_\E\perp}\Q(\E)=\A.$

Let $M,N\in \A,$ we prove now that 
$$\Omega_\E:\underline{\mathrm{Hom}}(M,N)\rightarrow \underline{\mathrm{Hom}}(\Omega_\E(M),\Omega_\E(N))$$ 
is an isomorphism. 
\

Indeed, we start by proving that this map is a monomorphism. Let $[f]:M\rightarrow N$ be such that $\Omega_\E([f])=[f_1]=0,$ 
where $f_1$ is obtained from the following commutative and $\E$-exact diagram
$$\xymatrix@R0.4cm@C=0.6cm{   
\eta:\quad  \Omega_\E(M) \ar[r]^{\alpha_1} \ar[d]^{f_1} & P_0(M)\ar[r]^{\mu_1} \ar[d]^{f_0} & M \ar[d]^{f} \\
\varepsilon:\quad  \Omega_\E(N) \ar[r]^{\alpha_2} & P_0(N)\ar[r]^{\mu_2}  & N.  }$$
 Since $[f_1]=0,$ then $f_1$ factors throughout and admissible epic $\mu: Q\twoheadrightarrow \Omega_\E(N)$ for some 
$Q\in\Q(\E).$ That is, there exists $\lambda:\Omega_\E(M)\rightarrow Q$ such that $f_1=\mu\lambda.$
\
 
In order to see that $[f]=0,$ it is enough to prove that the pull-back $\varepsilon f,$ of the $\E$-exact sequence $\varepsilon$ by the 
morphism $f,$ splits. To see that $\varepsilon f$ splits, using \cite[Proposition 3.1]{Buhler},  we factorise the morphism $(f_1,f_0,f):\eta\to\varepsilon$  through an  $\E$-exact sequence as can be seen in the following 
$\E$-exact and commutative diagram

$$\xymatrix@R0.4cm@C=0.6cm{   
\eta:  \quad \Omega_\E(M) \ar[r]^{\alpha_1} \ar[d]^{f_1} & P_0(M)\ar[r]^{\mu_1} \ar[d]^{\alpha_0} & M \ar@{=}[d]  \\
\varepsilon f:\quad  \Omega_\E(N) \ar[r] \ar@{=}[d] & T\ar[r] \ar[d]^{f'} & M \ar[d]^{f} \\
\varepsilon:\quad   \Omega_\E(N) \ar[r]^{\alpha_2} & P_0(N)\ar[r]^{\mu_2}  & N. }$$

Note that $f'\alpha_0=f_0.$ Moreover, the push-out $f_1\eta$ of the $\E$-exact sequence $\eta$ by 
the morphism $f_1$ is just 
$\varepsilon f,$ that is, $f_1\eta = \varepsilon f$ (see \cite[Proposition  2.12]{Buhler}).  Now, we consider
 the push-outs $\lambda\eta$ and $\mu(\lambda\eta).$ Since $f_1=\mu\lambda,$ we get that  
 $\mu(\lambda\eta)=\varepsilon f,$ and thus it is obtained  the following $\E$-exact and commutative 
 diagram 
$$\xymatrix@R0.4cm@C=0.6cm{   
\eta: \quad   \Omega_\E(M) \ar[r]^{\alpha_1} \ar[d]^{\lambda} & P_0(M)\ar[r]^{\mu_1} \ar[d]^{\lambda'} & M \ar@{=}[d] \\
\lambda\eta:\quad   Q{\qquad}{\quad} \ar[r]\ar[d]^{\mu} & E\ar[r] \ar[d]^{\mu'} & M\ar@{=}[d]   \\  
\varepsilon f:\quad  \Omega_\E(N) \ar[r]  & T\ar[r]  & M,  }$$
where $\mu'\lambda'=\alpha_0.$  But, we know that $\Ext_\E^{1}(M,Q)=0,$ since $Q\in\Q(\E)$ and $M\in{}^{{}_\E\perp}\Q(\E).$ In particular, 
the $\E$-exact sequence $\lambda\eta$ splits. Therefore $\varepsilon f$ splits because of the equality $\varepsilon f= \mu(\lambda\eta),$ 
proving that $[f]=0.$
\

Now, we prove that $\Omega_\E:\underline{\mathrm{Hom}}(M,N)\rightarrow \underline{\mathrm{Hom}}(\Omega_\E(M),\Omega_\E(N))$ is an epimorphism.
Let $g:\Omega_\E(M)\rightarrow \Omega_\E(N)$ be a morphism in $\C.$ Consider the following $\E$-exact diagram in $\C$
$$\xymatrix@R0.4cm@C=0.6cm{   
\eta:\quad   \Omega_\E(M) \ar[r]^{\alpha_1} \ar[d]^{g} & P_0(M)\ar[r]^{\mu_1} & M \\
\varepsilon: \quad \Omega_\E(N) \ar[r]^{\alpha_2} & P_0(N)\ar[r]^{\mu_2}  & N.}$$
Hence, by taking the 
push-out (see \cite[Proposition  2.12]{Buhler}) $g\eta$  of the $\E$-exact sequence $\eta$ by the morphism $g,$ we get the following $\E$-exact and commutative diagram
$$\xymatrix@R0.4cm@C=0.6cm{   
\eta:\quad \Omega_\E(M) \ar[r]^{\alpha_1} \ar[d]^{g} & P_0(M)\ar[r]^{\mu_1} \ar[d]^{g'} & M \ar@{=}[d]  \\
g\eta :\quad \Omega_\E(N) \ar[r] ^{\alpha'_1} & T\ar[r] ^{\mu'_1} & M. }$$
Now, we do the push-out $\alpha_1'\varepsilon$ of the $\E$-exact sequence $\varepsilon$ by the morphism $\alpha'_1.$ Then, 
from the Snake's Lemma (see \cite[Proposition 8.11]{Buhler}) we get the following $\E$-exact and commutative  diagram
$$\xymatrix@R0.9cm@C=0.8cm{  
\varepsilon:\quad  \Omega_\E(N) \ar[r]^{\alpha_2} \ar[d]^{\alpha'_1} & P_0(N)\ar[r]^{\mu_2} \ar[d]^{\alpha''_1} & N \ar@{=}[d] \\
\alpha_1'\varepsilon:\quad T{\qquad\quad} \ar[r]^{{\qquad}\lambda}\ar[d]^{\mu'_1} & E \ar[r]^{t}\ar[d]^{\mu''_1}  & N\\ 
 M\ar@{=}[r] & M  &\ &\  }$$
Then, using that $\Ext_\E^{1}(M,P_0(N))=0,$ it follows that the second 
column in the above diagram splits. That is $E=P_0(N)\oplus M,$ $\alpha''_1=\small{\begin{pmatrix}1\\ 0\end{pmatrix}}$ and $\mu''_1=(0,1).$ In particular, 
we have that $\lambda=\small{\begin{pmatrix}\lambda_1\\ \lambda_2\end{pmatrix}}$ and $t=(t_1,t_2).$ Moreover, the equalities 
$\lambda\alpha'_1=\alpha''_1\alpha_2,$  $t\alpha''_1=\mu_2,$  $\mu''_1\lambda=\mu'_1$ and $t\lambda =0$ give us that 
\begin{equation*} (*)\quad
\begin{split}
\alpha_2 & = \lambda_1\alpha'_1,\\
 0 & = \lambda_2\alpha'_1,\\
 t_1& =\mu_2,\\
 \lambda_2 & = \mu'_1,\\
 \mu_2\lambda_1+t_2\mu'_1 &= t_1\lambda_1+t_2\lambda_2=0.
\end{split}
\end{equation*}
We assert that  $t_2:M\to N$ satisfies that $\Omega_\E([-t_2])=[g].$ Indeed, we start by lifting $t_2$ and getting the following $\E$-exact and
commutative diagram
$$\xymatrix@R0.4cm@C=0.6cm{   
 \eta:\quad \Omega_\E(M) \ar[r]^{\alpha_1} \ar[d]^{t_{2,1}} & P_0(M)\ar[r]^{\mu_1}\ar[d]^{t_{2,0}} & M \ar[d]^{t_2} \\
 \varepsilon:\quad \Omega_\E(N) \ar[r]^{\alpha_2} & P_0(N)\ar[r]^{\mu_2}  & N.  }$$
In what follows, we make use of the following commutative diagram
$$\xymatrix@R0.4cm@C=0.6cm{  
 \eta:\quad \Omega_\E(M) \ar[r]^{\alpha_1} \ar[d]^g & P_0(M)\ar[r]^{\mu_1} \ar[d]^{g'} & M \ar@{=}[d] \\
 g\eta:\quad \Omega_\E(N) \ar[r]^{\alpha'_1}\ar@{=}[d] & T \ar[r]^{\mu'_1}\ar[d]^{\lambda_1}  & M\ar[d]^{-t_2} \\ 
\varepsilon:\quad \Omega_\E(N)\ar[r]^{\alpha_2} & P_0(N)\ar[r]^{\mu_2}  & N,  }$$
where the commutativity of the bottom part are given by the equalities in $(*).$
\
 
Consider the morphism $\lambda_1g'+t_{2,0}:P_0(M)\to P_0(N).$ By using the preceding two commutative diagrams,   we get 
that $\mu_2(\lambda_1g'+t_{2,0})=0.$ Thus, there exists a morphism $\delta:P_0(M)\to \Omega_\E(N)$ such that 
$\lambda_1g'+t_{2,0}=\alpha_2\delta.$ But now, we can see that 
$$ \alpha_2\delta\alpha_1 =\lambda_1g'\alpha_1+t_{2,0}\alpha_1 = \lambda_1\alpha'_1g+\alpha_2 t_{2,1} =\alpha_2(g+t_{2,1}),
$$
 and since $\alpha_2$ is an admissible monic, we get that $\delta\alpha_1=g+t_{2,1}.$ Therefore, we conclude that 
$\Omega_\E([-t_2])=-[t_{2,1}]=[-t_{2,1}]=[g-\delta\alpha_1]=[g].$
\end{proof}

\begin{cor}\label{experp1} Let $(\C,\E)$ be an exact $IT$-context, which is weakly idempotent complete. Then, the following statements hold true.
\begin{enumerate}
\item[(a)] $\Phi_\E(M)=0=\Psi_\E(M)$ $\;\forall\, M\in {}^{{}_\E\perp}\Q(\E).$
\item[(b)] $\pd_\E(M)=\infty$ if $M\in {}^{{}_\E\perp}\Q(\E)-\Q(\E).$
\end{enumerate}
\end{cor}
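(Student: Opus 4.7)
\textbf{Proof plan for Corollary \ref{experp1}.}
My strategy is to combine the full-faithfulness of the restricted syzygy functor $\Omega_\E\colon\underline{\A}\to\underline{\A}$ provided by Theorem \ref{xperp}, where $\A:={}^{{}_\E\perp}\Q(\E)$, with the definitions of $\Phi_\E$ and $\Psi_\E$. I first record that $\A$ is closed under direct summands (since $\Ext^i_\E(-,P)$ is an additive bifunctor) and, as proved inside Theorem \ref{xperp}, under $\Omega_\E$.

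For part (a), pick $M\in\A$ and write $M=\oplus_{i=1}^r M_i^{a_i}$ in $\C$ with the $M_i$ pairwise non-isomorphic indecomposables. Each $M_i$ lies in $\A$, and by Proposition \ref{indenelcociente} the generators of $\langle M\rangle$ are the classes of the $M_i$ not in $\Q(\E)$. A fully faithful additive functor induces a ring isomorphism on endomorphism rings and reflects isomorphisms, so $\Omega_\E$ carries indecomposables of $\underline{\A}$ to indecomposables of $\underline{\A}$ and is injective on iso-classes. Iterating, each $\Omega_\E^k(M_i)$ remains a nonzero indecomposable of $\underline{\C}$, and for fixed $k$ these are pairwise non-isomorphic. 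Therefore $\Omega_\E\colon \Omega_\E^k(\langle M\rangle)\to\Omega_\E^{k+1}(\langle M\rangle)$ is a $\Z$-linear map between free abelian groups of the same finite rank sending a basis to a basis, hence an isomorphism for every $k\geq 0$. By Fitting's Lemma and the definition of $\Phi_\E$, this gives $\Phi_\E(M)=\eta_{\Omega_\E}(\langle M\rangle)=0$.

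To upgrade to $\Psi_\E(M)=0$, it suffices to prove $\fpd_\E(\add(M))=0$. Let $N\in\add(M)$ with $\pd_\E(N)<\infty$; since $\A$ is closed under direct summands, $N\in\A$. If $n:=\pd_\E(N)\geq 1$, iterating $\Omega_\E(\A)\subseteq\A$ gives $\Omega_\E^{n-1}(N)\in\A$. Lemma \ref{primeraI} yields $\Omega_\E^n(N)\in\Q(\E)$, so the defining property of $\A$ gives $\Ext^1_\E(\Omega_\E^{n-1}(N),\Omega_\E^n(N))=0$. Hence the $\E$-exact sequence $\Omega_\E^n(N)\rightarrowtail P_{n-1}\twoheadrightarrow\Omega_\E^{n-1}(N)$ splits, forcing $\Omega_\E^{n-1}(N)\in\Q(\E)$ and thus $\pd_\E(N)\leq n-1$, a contradiction. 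Therefore $n=0$, i.e., $N\in\Q(\E)$. Combined with $\Phi_\E(M)=0$, this shows $\Psi_\E(M)=0$.

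Part (b) is then immediate: if some $M\in\A\setminus\Q(\E)$ had $\pd_\E(M)<\infty$, the argument above would place $M$ in $\Q(\E)$. The main obstacle, I anticipate, is justifying cleanly that $\Omega_\E^k(\langle M\rangle)$ remains free of constant rank, i.e., that the fully faithful restriction of $\Omega_\E$ to $\underline{\A}$ really detects distinct basis elements of $K_\E(\C)$; this uses both halves of full-faithfulness (preservation of local endomorphism rings so that indecomposables go to indecomposables, and reflection of isomorphism so that pairwise non-isomorphic summands stay pairwise non-isomorphic) together with the fact that zero objects of $\underline{\C}$ are exactly the $\E$-projectives.
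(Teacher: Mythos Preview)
Your argument is correct. Both you and the paper derive $\Phi_\E(M)=0$ from the full-faithfulness of $\Omega_\E|_{\underline{\A}}$ (Theorem \ref{xperp}); the paper argues by contradiction---a new relation in $\Omega_\E^t\langle M\rangle$ translates into an isomorphism in $\underline{\C}$, which full-faithfulness pushes back to step $t-1$---whereas you argue directly that the indecomposable summands of $M$ stay nonzero, indecomposable, and pairwise non-isomorphic under every $\Omega_\E^k$, so the rank of $\Omega_\E^k\langle M\rangle$ is constant. These are two packagings of the same idea.

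Where you genuinely diverge is in the treatment of $\Psi_\E(M)=0$ and of (b). The paper deduces $\Psi_\E(M)=0$ from the already-established $\Phi_\E(M)=0$ (a finite-pd summand would force a rank drop), and proves (b) by invoking Proposition \ref{basicasdefi}(a). Your route is more elementary and independent of the $\Phi$-computation: for $N\in\A$ with $\pd_\E(N)=n\geq 1$, the vanishing $\Ext^1_\E(\Omega_\E^{n-1}(N),\Omega_\E^n(N))=0$ (from the very definition of $\A$) splits the last syzygy sequence, contradicting minimality of $n$. This argument uses only the closure of $\A$ under $\Omega_\E$ and summands plus the Yoneda interpretation of $\Ext^1_\E$, and it yields (b) immediately without passing through the Igusa--Todorov functions at all. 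So your proof of (b) is in fact self-contained, which is a small gain over the paper's route.
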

\begin{proof}
(a) Let $M\in {}^{{}_\E\perp}\Q(\E).$ Consider a decomposition  $M=\oplus_{i=1}^kM_i^{\alpha_i}$ of $M$ in the stable category $\uC,$ 
 where each $M_i\in\ind(\uC)$ and are pairwise non-isomorphic. Observe that 
$M_i\in {}^{{}_\E\perp}\Q(\E),$ for any $ i,$ and $\langle M\rangle=\langle [M_1],\ldots ,[M_k]\rangle.$ So we get that  rk$(\langle M\rangle)=k.$ 
Let us assume that $\Phi_\E(M)=t>0$. Then $t$ is the last natural number where the rank falls down when we apply the morphism of abelian groups
$$\Omega_\E:\Omega_\E^{t-1}\langle [M_1],\ldots , [M_k]\rangle\rightarrow \Omega_\E^t\langle[M_1],\ldots ,[M_k]\rangle.$$
Then we get 
$$ (*) \quad  \sum_{i=1}^kt_i\Omega_\E^t[M_i]=\sum_{i=1}^kr_i\Omega_\E^t[M_i]\ \ \mathrm{with}\ \ t_i,r_i\geq 0,$$
which is a new non-trivial relation (that is, there exists $i\geq 1$ such that $t_i\neq r_i$) .
Therefore $$(**) \quad \Omega_\E(\oplus_{i=1}^k\Omega_\E^{t-1}(M_i^{t_i}))\simeq \Omega_\E(\oplus_{i=1}^k\Omega_\E^{t-1}(M_i^{r_i}))\ \
\mathrm{in} \ \ \uC.$$
Since $M_i\in {}^{{}_\E\perp}\Q(\E),$ for any $ i,$ we have by Theorem \ref{xperp} and $(**)$ that  
$$\oplus_{i=1}^k\Omega_\E^{t-1}(M_i^{t_i}) \simeq \oplus_{i=1}^k\Omega_\E^{t-1}(M_i^{r_i})\;\text{ in }\;\uC.$$  
So, the relation $(*)$ was already true for the subgroup $\Omega_\E^{t-1}\langle [M_1],\ldots , [M_k]\rangle$ and we get a 
contradiction; proving  that  $\Phi_\E(M)=0$ and thus  $\Psi_\E(M)=\fpd_\E(\add\,(M)).$
\

 We assert now that  $\Psi_\E(M)=0.$ Indeed, suppose there is $N\mid M$ with $s:=\pd_\E(N)$ finite and non-zero. Then by Lemma \ref{primeraI} we get that  
$\Omega^s_\E([N])=0,$  that is, the rank drops at the moment $s,$ contradicting that this 
rank never does it, since $\Phi_\E(M)=0.$ 
\

(b) Let $M\in {}^{{}_\E\perp}\Q(\E)-\Q(\E).$ Assume that $n:=\mathrm{pd}_\E(M)$  is finite. Since
$M\not\in\Q(\E),$ we obtain that $n>0.$ On the other hand,  by Proposition \ref{basicasdefi} (a), it follows that $\Phi_\E(M)=\mathrm{pd}_\E(M)=n,$
which is a contradiction to item (a). Therefore, we get that $\pd_\E(M)=\infty.$
\end{proof}

The next result is a generalisation of the \cite[Corollary 3.17]{LM}. 

\begin{teo}\label{experp1.5} Let $(\C,\E)$ be an exact $IT$-context, which is weakly idempotent complete. Then, the following statements 
hold true.
\begin{itemize}
\item[(a)] If $n:=\id_\E(\Q(\E))$ is finite, then  
$\Fiedim\,(\Omega_\E^n(\C))=0=\Psiedim\,(\Omega_\E^n(\C)).$
\item[(b)]  $\fpd_\E(\C)\leq\Fiedim\,(\C)\leq\Psiedim\,(\C)\leq \id_\E(\Q(\E)).$
\end{itemize}
\end{teo}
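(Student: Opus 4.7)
The plan is to reduce everything to Corollary \ref{experp1}(a) via a dimension-shifting argument, and then bootstrap from (a) to (b) using the recursion $\Psi_\E(M)\le\Psi_\E(\Omega_\E(M))+1$.

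For part (a), suppose $n:=\id_\E(\Q(\E))<\infty$ and fix any $M=\Omega_\E^n(C)$ with $C\in\C$. The key observation is that $M\in{}^{{}_\E\perp}\Q(\E)$. To see this, take $P\in\Q(\E)$. Since $(\C,\E)$ has enough $\E$-projectives, the canonical $\E$-exact sequences $\Omega_\E^k(C)\rightarrowtail P_{k-1}\twoheadrightarrow\Omega_\E^{k-1}(C)$ ($k=1,\dots,n$), together with the long exact sequence in $\Ext_\E$ induced by $\Hom_\C(-,P)$ and the fact that $P_{k-1}\in\Q(\E)$ kills higher $\Ext_\E$, yield the standard dimension-shifting isomorphism
\[
\Ext^{i}_\E(\Omega_\E^n(C),P)\;\cong\;\Ext^{i+n}_\E(C,P)\qquad\forall\,i\ge 1.
\]
Since $P\in\Q(\E)$ forces $\id_\E(P)\le n$, the right-hand side vanishes, so $M\in{}^{{}_\E\perp}\Q(\E)$. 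As $(\C,\E)$ is weakly idempotent complete, Corollary \ref{experp1}(a) applies and gives $\Phi_\E(M)=0=\Psi_\E(M)$. Taking supremum over $M\in\Omega_\E^n(\C)$ yields (a).

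For part (b), the two leftmost inequalities $\fpd_\E(\C)\le\Fiedim(\C)\le\Psiedim(\C)$ are direct from Remark \ref{primeras}(a). For the right-most inequality, we may assume $n:=\id_\E(\Q(\E))<\infty$ (else the bound is vacuous). Fix $M\in\C$. Proposition \ref{propsi}(g) gives $\Psi_\E(M)\le\Psi_\E(\Omega_\E(M))+1$; iterating $n$ times we obtain
\[
\Psi_\E(M)\;\le\;\Psi_\E(\Omega_\E^{n}(M))+n.
\]
But $\Omega_\E^{n}(M)\in\Omega_\E^n(\C)$, so by part (a) we have $\Psi_\E(\Omega_\E^{n}(M))=0$. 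Hence $\Psi_\E(M)\le n$, and taking the supremum over $M\in\C$ yields $\Psiedim(\C)\le n=\id_\E(\Q(\E))$.

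The only subtle point is the dimension shifting in step (a), which must be carried out in the relative exact setting rather than in a classical abelian category; however the machinery has already been put in place in the excerpt (enough $\E$-projectives, the long exact $\Ext_\E$-sequence derived via the Horseshoe Lemma \ref{horseshoe}, and the Schanuel-type invariance in Lemma \ref{schanuelL}), so once dimension shifting is recorded, the rest is a short inductive reduction. The weak idempotent completeness hypothesis enters only through the application of Corollary \ref{experp1}(a), which in turn rests on Theorem \ref{xperp}.
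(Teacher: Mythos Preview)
Your proof is correct and follows essentially the same approach as the paper: for (a) you use dimension shifting to place $\Omega_\E^n(\C)$ inside ${}^{{}_\E\perp}\Q(\E)$ and then invoke Corollary~\ref{experp1}(a); for (b) you iterate Proposition~\ref{propsi}(g) and apply (a), together with Remark~\ref{primeras}(a) for the left-hand inequalities. The paper's argument is the same in every essential step.
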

\begin{proof}  
(a) Assume that $n:=\id_\E(\Q(\E))$ is finite. Let $M\in\C,$ and since $\C$ has enough $\E$-projectives, we 
have the $\E$-acyclic complex 
 $$\Omega_\E^n(M)\rightarrowtail P_{n-1}\rightarrow \cdots \rightarrow P_0\twoheadrightarrow M,$$
 where  $P_{i}\in \Q(\E)$ for all $i.$  For any $P\in\Q(\E)$ and $j\geq 1,$ we know that   
 $$\Ext_\E^j(\Omega_\E^n(M),P)\simeq \Ext_\E^{n+j}(M,P).$$ 
 Therefore,  we obtain from the above isomorphism that $\Omega^n_\E(M)\subseteq {}^{{}_\E\perp}\Q(\E),$ since $n=\id_\E(\Q(\E)).$ Thus, the 
 equalities in  (a) can be obtained from Corollary \ref{experp1}.
 \
 
 (b) Let $M\in\C.$  From Proposition  \ref{propsi} (g), we get that   
 $\Psi_\E(M) \leq n+\Psi_\E(\Omega_\E^n(M)).$ Therefore, from (a), we conclude that
$\Psiedim\,(\C)\leq \id_\E(\Q(\E)).$ Then, from Remark \ref{primeras}, the result follows.
\end{proof}

The theorem above has  a serie of consequences as can be seen below. Note that, for any abelian $IT$-context $\C$ and  a precovering generator class $\X$  in $\C,$ we get that the pair $(\C,\E_{F_\X})$ is an $IT$-context (see Remark \ref{XITCon}). Since $\C$ is abelian, it follows that $\C$ is weakly idempotent complete.

\begin{cor}\label{experp2} Let $\C$ be an abelian $IT$-context and $\X$ be a precovering generator class   in $\C.$ Then, for $F:=F_\X,$ the following statements 
hold true.
\begin{itemize}
\item[(a)] If $n:=\id_F(\X)$ is finite, then  
$\Fixdim\,(\Omega_\X^n(\C))=0=\Psixdim\,(\Omega_\X^n(\C)).$
\item[(b)]  $\fpd_F(\C)\leq\Fixdim\,(\C)\leq\Psixdim\,(\C)\leq \id_F(\X)\leq\id\,(\X).$
\end{itemize}
\end{cor}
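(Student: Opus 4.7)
The plan is to derive Corollary \ref{experp2} as a direct application of Theorem \ref{experp1.5} to the exact category $(\C,\E_F).$ By Remark \ref{XITCon}, the abelian $IT$-context $\C$ together with the precovering generator class $\X$ yields an exact $IT$-context $(\C,\E_F);$ moreover, since $\C$ is abelian it is idempotent complete and hence weakly idempotent complete, so the hypotheses of Theorem \ref{experp1.5} are satisfied. By Proposition \ref{projExcat}, $\Q(\E_F)=\X,$ so $\id_{\E_F}(\Q(\E_F))=\id_F(\X),$ and the $\E_F$-relative Igusa--Todorov functions and dimensions match the $\X$-relative ones by definition.

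With these identifications in hand, part (a) follows directly from Theorem \ref{experp1.5}(a): if $n:=\id_F(\X)$ is finite, then $\Fixdim\,(\Omega_\X^n(\C))=0=\Psixdim\,(\Omega_\X^n(\C)).$ Likewise, the first three inequalities of (b), namely $\fpd_F(\C)\leq\Fixdim\,(\C)\leq\Psixdim\,(\C)\leq\id_F(\X),$ are an immediate translation of Theorem \ref{experp1.5}(b) applied to $\E=\E_F.$

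The remaining inequality $\id_F(\X)\leq\id(\X)$ is the main obstacle and requires a separate argument. Fix $X\in\X$ with $n:=\id(X)$ finite (otherwise the claim is vacuous) and $Z\in\C$ arbitrary; the plan is to show $\Ext^j_F(Z,X)=0$ for all $j>n.$ Since each $P_i\in\X=\Q(\E_F)$ satisfies $\Ext^k_F(P_i,-)=0$ for $k\geq 1,$ the long exact sequences attached to the $F$-exact short sequences in an $F$-projective resolution $P_\bullet\twoheadrightarrow Z$ give, by dimension shifting, the isomorphism $\Ext^j_F(Z,X)\cong F(\Omega_\X^{j-1}(Z),X),$ which is contained in $\Ext^1_\C(\Omega_\X^{j-1}(Z),X)$ since $F$ is a subfunctor of $\Ext^1_\C.$ It therefore suffices to show, for $j=n+1,$ that every $F$-exact extension $\zeta:0\to X\to E\to\Omega_\X^n(Z)\to 0$ splits. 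The key step is to splice $\zeta$ with the $n$-fold exact sequence $0\to\Omega_\X^n(Z)\to P_{n-1}\to\cdots\to P_0\to Z\to 0$ coming from the resolution, obtaining an $(n+1)$-fold exact extension of $Z$ by $X$ which represents the zero class in $\Ext^{n+1}_\C(Z,X)=0;$ a Yoneda-equivalence argument exploiting the $F$-exactness of both $\zeta$ and each short sequence in the resolution should then yield the required splitting of $\zeta,$ completing the proof.
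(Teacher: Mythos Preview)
Your reduction to Theorem \ref{experp1.5} is exactly the paper's approach: you correctly set up $(\C,\E_F)$ as an exact $IT$-context that is weakly idempotent complete (via Remark \ref{XITCon} and abelianness), identify $\Q(\E_F)=\X$ via Proposition \ref{projExcat}, and read off (a) and the first three inequalities of (b) from Theorem \ref{experp1.5}. The paper does the same in one line.

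The difference lies in the final inequality $\id_F(\X)\le\id(\X)$. The paper dispatches this in a single clause (``any relative $n$-exact sequence is in particular an $n$-exact sequence''), i.e.\ it invokes the natural comparison from relative to absolute Yoneda extensions. You try to make this explicit by splicing, and this is where your argument has a genuine gap. From the spliced $(n+1)$-fold extension representing $0$ in $\Ext^{n+1}_\C(Z,X)$ you cannot directly conclude that $\zeta$ splits: the passage $[\zeta]\in\Ext^1_\C(\Omega_\X^n(Z),X)\to\Ext^{n+1}_\C(Z,X)$ is the composite of the connecting maps arising from the short exact sequences $0\to\Omega_\X^{k+1}(Z)\to P_k\to\Omega_\X^k(Z)\to 0$, and each such connecting map has kernel equal to the image of $\Ext^i_\C(P_k,X)$. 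Since the $P_k\in\X$ are \emph{not} assumed $\C$-projective, these images need not vanish, so the composite is not injective in general; hence ``splice equals zero'' does not force $[\zeta]=0$. Your appeal to the $F$-exactness of the intermediate sequences does not repair this, because Yoneda equivalences witnessing triviality in $\Ext^{n+1}_\C$ need not respect $F$-exactness.

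In short: your proof is the paper's proof for everything except the last inequality, where you work harder than the paper but leave precisely the step that needs justification unjustified. If you want to keep your line of attack, what is actually needed is an argument that the comparison $\Ext^j_F(Z,X)\to\Ext^j_\C(Z,X)$ is injective (or an independent reason why $F(\Omega_\X^{j-1}(Z),X)=0$ for $j>\id(X)$); the splice alone does not provide it.
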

\begin{proof} As we have seen above, the pair $(\C,\E_F)$ is an exact $IT$-context, which is idempotent complete. We have that $\id_F(\X)\leq\id\,(\X),$ since any relative $n$-exact sequence  is
 in particular an $n$-exact sequence. So the result follows from Theorem \ref{experp1.5}, because $\Q(\E_F)=\X.$
\end{proof}

\begin{cor}\label{experp2.1} Let $\C$ be an abelian $IT$-context and $\X$ be a precovering generator class   in $\C,$ such that 
$\X$ is closed under extensions and has an $\X$-injective relative cogenerator in $\X.$ Then, for $F:=F_\X,$ the following 
inequalities hold
 $$\resdim_\X(\Q^{<\infty}_F(\C))\leq\Fixdim\,(\C)\leq\Psixdim\,(\C)\leq\id\,(\X).$$
\end{cor}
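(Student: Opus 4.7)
My plan is to establish the three inequalities separately, each by a short appeal to the material developed earlier in this section together with Proposition \ref{igualdades}. The hypotheses on $\X$ (precovering generator, closed under extensions, with an $\X$-injective relative cogenerator) ensure that the pair $(\C,\E_F)$ is an exact $IT$-context by Remark \ref{XITCon}, and that the equality form of Proposition \ref{igualdades} is available. Since $\C$ is abelian, it is automatically weakly idempotent complete, so Corollary \ref{experp2} applies without further hypothesis.

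The middle inequality $\Fixdim(\C)\leq\Psixdim(\C)$ I would obtain at once from Remark \ref{primeras}(a) with $\E=\E_F$ and $\Y=\C$. The rightmost inequality $\Psixdim(\C)\leq\id(\X)$ I would get by citing Corollary \ref{experp2}(b) directly, which in fact yields the refinement $\Psixdim(\C)\leq\id_F(\X)\leq\id(\X)$.

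The leftmost inequality is where the extra hypotheses really earn their keep. Given $M\in\Q^{<\infty}_F(\C)$, so that $\pd_F(M)<\infty$, Proposition \ref{igualdades}(b1) yields $\resdim_\X(M)=\pd_F(M)$; and since $\pd_F(M)$ is finite, Proposition \ref{basicasdefi}(a) applied in the exact $IT$-context $(\C,\E_F)$ gives $\Phi_\X(M)=\pd_F(M)$. Stringing these together, $\resdim_\X(M)=\Phi_\X(M)\leq\Fixdim(\C)$, and taking the supremum over all $M\in\Q^{<\infty}_F(\C)$ completes the argument.

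I do not anticipate any genuine obstacle: the statement is essentially a careful repackaging of results already in hand. The only point of care is to verify that the hypotheses of each cited result are in place --- in particular that both the closure of $\X$ under extensions and the existence of an $\X$-injective relative cogenerator in $\X$ are precisely what license the equality in Proposition \ref{igualdades}(b1) used in the key step.
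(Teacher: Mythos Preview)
Your proof is correct and follows essentially the same route as the paper: the paper invokes Corollary \ref{experp2}(b) together with Proposition \ref{igualdades}(b1) to identify $\resdim_\X(\Q^{<\infty}_F(\C))$ with $\fpd_F(\C)$ and then chain the inequalities, while you unwind the leftmost step one level further via Proposition \ref{basicasdefi}(a) rather than citing the already-packaged inequality $\fpd_F(\C)\leq\Fixdim(\C)$. This is a cosmetic difference in exposition, not a genuinely different argument.
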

\begin{proof} By definition, we have that $\fpd_F(\C)=\pd_F(\Q^{<\infty}_F(\C)).$ Then, the result follows from Proposition \ref{igualdades} (b) and Corollary \ref{experp2}.
\end{proof}

\begin{cor}\label{experp2.2} Let $(\X,\Y)$ be a complete hereditary cotorsion pair in an skeletally small and Krull-Schmidt abelian category $\C.$ Then, the following statements hold true.
\begin{itemize}
\item[(a)] If $\C$ has enough projectives then 
$$\resdim_\X(\Q^{<\infty}_{F_\X}(\C))\leq\Fixdim\,(\C)\leq\Psixdim\,(\C)\leq\id\,(\X)$$
\item[(b)] If $\C$ has enough injectives then 
$$\coresdim_\Y(\mathcal{I}^{<\infty}_{F^\Y}(\C))\leq  \Phi^{\Y}\mathrm{dim}\,(\C)\leq\Psi^\Y\mathrm{dim}\,(\C)\leq\pd\,(\Y)$$
\end{itemize}
\end{cor}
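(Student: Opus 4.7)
The plan is to reduce each part directly to Corollary \ref{experp2.1} (and its dual, which follows by reversing arrows). No new machinery is needed; the whole job is to check that a complete hereditary cotorsion pair provides exactly the data required by \ref{experp2.1}. Since $\C$ is skeletally small Krull--Schmidt and has enough projectives (resp.\ injectives), it is already an abelian $IT$-context (resp.\ $IT^{op}$-context), so the framework of the previous corollary applies.

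For part (a), I would verify three things about $\X$:
\begin{itemize}
\item[(i)] $\X$ is a generator. Being hereditary, the pair $(\X,\Y)$ has $\X$ resolving, so $\Q(\C)\subseteq\X$ and $\mathrm{add}(\X)=\X$.
\item[(ii)] $\X$ is precovering. Completeness gives, for each $C\in\C$, an exact sequence $0\to Y\to X\to C\to 0$ with $X\in\X$ and $Y\in\Y$; applying $\Hom_\C(X',-)$ with $X'\in\X$ and using $\Ext^1_\C(X',Y)=0$ (which holds since $\X={}^{\perp_1}\Y$) shows that $X\to C$ is an $\X$-precover.
\item[(iii)] $\X$ is closed under extensions (immediate from $\X={}^{\perp_1}\Y$) and has an $\X$-injective relative cogenerator $\omega:=\X\cap\Y$. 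Indeed, $\omega\subseteq\X$ is clear; since the pair is hereditary, $\Ext^i_\C(X,W)=0$ for all $X\in\X$, $W\in\omega$, $i\geq 1$, so $\omega$ is $\X$-injective. For any $X\in\X$, completeness gives $0\to X\to Y'\to X''\to 0$ with $Y'\in\Y$ and $X''\in\X$; closure of $\X$ under extensions forces $Y'\in\X\cap\Y=\omega$, so $\omega$ is a relative cogenerator in $\X$.
\end{itemize}
With (i)--(iii) in hand, Corollary \ref{experp2.1} yields exactly the inequalities in (a).

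For part (b), the argument is perfectly dual. Here $\Y=\X^{\perp_1}$ is coresolving, hence a preenveloping cogenerator closed under extensions, and $\nu:=\X\cap\Y$ is a $\Y$-projective relative generator in $\Y$, by the same sort of completeness + closure-under-extensions juggling applied to the other sequence $0\to C\to Y'\to X'\to 0$ of the complete cotorsion pair. The opposite category $(\C^{op},\E_G^{op})$ with $G=F^{\Y}$ then satisfies the hypotheses of \ref{experp2.1}, and translating back gives the chain $\coresdim_\Y(\mathcal{I}^{<\infty}_{F^\Y}(\C))\leq\Phi^{\Y}\mathrm{dim}(\C)\leq\Psi^\Y\mathrm{dim}(\C)\leq\pd(\Y)$.

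There is no genuine obstacle: the only place where one has to pause is in (iii), where the interaction between completeness (which produces sequences with ends in $\X$ and $\Y$) and the hereditary hypothesis (which makes $\X$ resolving, hence extension-closed) conspires to land the middle term in $\omega=\X\cap\Y$. Once that observation is made, both parts are essentially citations of \ref{experp2.1} and its dual.
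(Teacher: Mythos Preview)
Your proposal is correct and follows exactly the same route as the paper: set $\omega:=\X\cap\Y$, verify that $(\X,\omega)$ satisfies the hypotheses of Corollary~\ref{experp2.1} (precovering generator, extension-closed, $\X$-injective relative cogenerator), and then invoke that corollary and its dual. Your write-up is in fact more detailed than the paper's, which simply asserts that these conditions hold without spelling out the verifications you give in (i)--(iii).
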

\begin{proof}  Let $\omega:=\X\cap \Y.$ 
\

(a)  Since $(\X,\Y)$ is a complete hereditary cotorsion pair, it follows that the  pair $(\X,\omega)$ satisfies the following conditions: 
$\X$  is a precovering generator class   in $\C,$ such that  $\X$ is closed under extensions and $\omega$ is an $\X$-injective relative cogenerator in $\X.$ That is, we can apply Corollay \ref{experp2.1} and so we get (a).

(b) Using  that $(\X,\Y)$ is a complete hereditary cotorsion pair, it can be seen that the pair $(\omega,\Y)$ satisfies the 
following conditions: 
$\Y$  is a preenvelopin cogenerator class   in  $\C,$ such that  $\Y$ is closed under extensions and $\omega$ is an $\Y$-projective relative generator in $\Y.$ That is, we can apply the dual of Corollay \ref{experp2.1} and so we get (b).
\end{proof}

\begin{cor} For any artin algebra $\Lambda,$ the following inequalities hold true
$$\mathrm{findim}(\Lambda)\leq\Phi\mathrm{dim}\,(\Lambda)\leq\Psi\mathrm{dim}\,(\Lambda)\leq 
\id\,({}_\Lambda\Lambda).$$
\end{cor}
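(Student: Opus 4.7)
The plan is to derive this as a direct consequence of Corollary \ref{experp2.1}, taking $\C=\modu(\Lambda)$ and $\X=\proj(\Lambda)$, and then to identify each of the four quantities on the two sides with its classical counterpart.

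First I would check the hypotheses of Corollary \ref{experp2.1}. Since $\Lambda$ is an artin algebra, $\modu(\Lambda)$ is a length-category, hence a skeletally small abelian Krull--Schmidt category with enough projectives, i.e.\ an abelian $IT$-context. The class $\X=\proj(\Lambda)$ is a generator (it equals $\Q(\C)$) and is precovering because every finitely generated $\Lambda$-module admits a projective cover. It is closed under extensions (extensions of projectives split), and one checks that $\omega:=\X$ is itself an $\X$-injective relative cogenerator in $\X$: the split sequence $0\to P\to P\to 0\to 0$ supplies the required coresolution and $\Ext^{i\geq 1}_\C(\X,\X)=0$. Thus Corollary \ref{experp2.1} applies and yields
\[
\resdim_{\X}(\Q^{<\infty}_F(\C))\leq \Phi_{\X}\mathrm{dim}(\C)\leq \Psi_{\X}\mathrm{dim}(\C)\leq \id(\X),
\]
where $F:=F_{\proj(\Lambda)}$.

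Next I would identify each term. Since $\X=\Q(\C),$ the subfunctor $F_{\proj(\Lambda)}$ coincides with $\Ext^{1}_{\C}(-,-)$ (as already recorded in the excerpt, where the authors set $\Phi:=\Phi_{\Q(\C)}$ and $\Psi:=\Psi_{\Q(\C)}$). Consequently $\pd_{F}(M)=\pd(M)$, so $\Q^{<\infty}_F(\C)=\{M\in\modu(\Lambda):\pd(M)<\infty\}$, and the resolution dimension with respect to $\proj(\Lambda)$ is just the ordinary projective dimension; hence
\[
\resdim_{\proj(\Lambda)}(\Q^{<\infty}_F(\C))=\findim(\Lambda).
\]
Similarly, by definition, $\Phi_{\X}\mathrm{dim}(\C)=\Phi\mathrm{dim}(\Lambda)$ and $\Psi_{\X}\mathrm{dim}(\C)=\Psi\mathrm{dim}(\Lambda)$.

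Finally, for the upper bound, every $P\in\proj(\Lambda)$ is a direct summand of $\Lambda^n$ for some $n$, so $\id(P)\leq\id(\Lambda^n)=\id({}_\Lambda\Lambda)$; together with ${}_\Lambda\Lambda\in\proj(\Lambda)$ this gives $\id(\X)=\id({}_\Lambda\Lambda)$. Substituting into the chain of inequalities above yields the desired
\[
\findim(\Lambda)\leq\Phi\mathrm{dim}(\Lambda)\leq\Psi\mathrm{dim}(\Lambda)\leq\id({}_\Lambda\Lambda).
\]
There is no real obstacle here; the statement is a packaging of Corollary \ref{experp2.1} (whose substance ultimately rests on Theorem \ref{experp1.5}) in the concrete setting of module categories. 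The only minor points to be careful about are verifying that $\proj(\Lambda)$ qualifies as an $\X$-injective relative cogenerator in itself, and confirming that $\id(\proj(\Lambda))=\id({}_\Lambda\Lambda)$.
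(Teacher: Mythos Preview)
Your proof is correct and essentially follows the paper's approach. The only difference is that the paper invokes Corollary~\ref{experp2} directly (which requires only that $\X$ be a precovering generator class), whereas you invoke Corollary~\ref{experp2.1} (which additionally asks for $\X$ closed under extensions with an $\X$-injective relative cogenerator); since Corollary~\ref{experp2.1} is itself an immediate consequence of Corollary~\ref{experp2}, and the extra hypotheses are trivially satisfied for $\X=\proj(\Lambda)$, the two routes amount to the same argument.
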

\begin{proof} Let $\Lambda$ be an artin algebra. Consider the abelian $ IT$-context
$\C:=\modu\,(\Lambda)$ and $\X:=\proj\,(\Lambda).$ In this case the subfunctor $F:=F_{\proj\,(\Lambda)}$ coincides with 
$\Ext^1_\Lambda(-,-).$ Therefore, the $\X$-relative Igusa-Todorov functions are just the canonical ones. 
Hence, the result follows from   Corollary  \ref{experp2}, since $\id_F(\proj\,(\Lambda))=\id\,({}_\Lambda\Lambda).$
\end{proof}

\begin{remark} Let $\Lambda$ be an artin algebra. The inequality $\mathrm{findim}(\Lambda)\leq  \id\,({}_\Lambda\Lambda),$ was already known \cite{Bass} by H. Bass in 1962. The inequality $\Psi\mathrm{dim}\,(\Lambda)\leq \id\,({}_\Lambda\Lambda)$ appears in \cite[Corollary 3.7]{LM}.
\end{remark}

\section{Homologically compatible pairs and balance} 
In this section we assume that $\C$ is an abelian category. Moreover, we use some 
results and the notation given in  \cite{Holm}. The material presented here deals with the right derived functors 
$\Ext^i_\X(-,-)$ and $\Ext^i_\Y(-,-)$ of $\Hom(-,-),$ for some ``good" pair of classes $\X$ and $\Y.$ 
We discuss the balance of these functors and give  applications in Gorenstein homological algebra via the 
 Igusa-Todorov functions.

Let $\X$ be a class of objects in $\C.$ We denote by $\LR(\X)$ the class of all objects $M$ admitting 
a {\bf proper $\X$-resolution} $X_\bullet(M)\stackrel{\varphi}{\to} M,$ that is, a complex
$$\eta_M:\quad\cdots\to X_1\to X_0\stackrel{\varphi}{\to} M\to 0$$
 such that $X_i\in\X$ and the Hom-complex $\Hom(X,\eta_M)$ is acyclic for any $X\in\X.$ Using such a complex 
 $X_\bullet(M)\stackrel{\varphi}{\to} M,$ 
we can define, in the usual way, the derived functor $\Ext^i_\X(M,N)$ for any $N\in\C.$ Dually, 
for any $\Y\subseteq\C,$ we have the 
 class $\RR(\Y)$ of all objects $N$ admitting a {\bf proper $\Y$-coresolution} 
$N\to Y_\bullet(N).$ Thus, we also get the derived functor $\Ext^i_\Y(M,N)$ for 
any $M\in\C.$

\begin{defi} A pair $(\X,\omega)$ of classes of objects in an abelian category $\C$ is {\bf left compatible} if $\X$ is closed under 
extensions and direct summands, and $\omega$ is an $\X$-injective relative cogenerator in $\X.$ 
Dually, we also have the notion of {\bf right compatible} pair $(\varepsilon,\Y),$ where $\varepsilon$ is a $\Y$-projective relative 
generator in $\Y.$
\end{defi}

\begin{remark}\label{LC1} Let $(\X,\omega)$ be a left compatible pair in an abelian category $\C.$ Consider any $M\in\X^\wedge.$ Then, by 
Lemma \ref{tresdim} (a), there is an exact sequence $0\to K\to X\stackrel{\varphi}{\to}M\to 0,$ with 
$\resdim_\omega(K)=\resdim_\X(M)-1$ and $\varphi:X\to M$ an $\X$-precover. So, we have that 
$\X^\wedge\subseteq\LR(\X).$
\end{remark}

\begin{pro}\label{ProBal1} Let $(\X,\omega)$ be a left compatible pair in an abelian category $\C.$ If $\X^\wedge=\C$ then the following statements 
hold true, for $F:=F_\X.$
\begin{itemize}
\item[(a)] The pair $(\C,\E_F)$ is an exact category with enough projectives and $\Q(\E_F)=\X.$
\item[(b)] $\pd_F(M)=\resdim_\X(M)$ for any $M\in\C.$
\item[(c)] $\pd_F(\C)=\id\,(\omega).$
\item[(d)] Let $\C$ be skeletally small and Krull-Schmidt. Then 
 \begin{itemize}
 \item[(d1)] $\fpd_F(\C)\leq\Fixdim\,(\C)\leq\Psixdim\,(\C)\leq\id_F(\X),$
 \item[(d2)] $\id\,(\omega)<\infty$ $\Rightarrow$ $\Fixdim\,(\C)=\Psixdim\,(\C)=\id\,(\omega).$
 \end{itemize}
\end{itemize}
\end{pro}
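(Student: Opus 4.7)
My plan is to reduce each part to results already established earlier in the paper. For part (a), I would verify the hypotheses of Proposition \ref{projExcat}. By Remark \ref{LC1}, together with $\X^\wedge=\C$, every $M\in\C$ admits an $\X$-precover $X_0\stackrel{\varphi}{\twoheadrightarrow} M$ whose kernel sequence is $F$-exact; so $\X$ is precovering. Closure of $\X$ under direct summands and extensions gives $\add(\X)=\X$, and any $P\in\Q(\C)$ lies in $\X^\wedge=\C$, so its $\X$-resolution splits at the end, forcing $P\in\X$; hence $\X$ is a generator. Applying Proposition \ref{projExcat} (whose use of enough projectives in $\C$ is only to extract the $\X$-precovers we already have) yields that $(\C,\E_F)$ is exact with enough $\E_F$-projectives and $\Q(\E_F)=\X$. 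For (b), the equality $\pd_F(M)=\resdim_\X(M)$ is exactly Proposition \ref{igualdades}(b1) for the pair $(\X,\omega)$.

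For part (c), I would combine (b) with $\X^\wedge=\C$: every $M\in\C$ has $\pd_F(M)=\resdim_\X(M)<\infty$, so $\mathcal{P}^{<\infty}_F(\C)=\C$ and $\fpd_F(\C)=\pd_F(\C)$. Then Proposition \ref{igualdades}(b2) gives $\fpd_F(\C)=\id_{\X^\wedge}(\omega)=\id(\omega)$, which is the claim.

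For part (d), under the skeletally small Krull-Schmidt hypothesis $(\C,\E_F)$ is an exact $IT$-context by Remark \ref{XITCon}, so the relative $IT$-functions are defined. The chain (d1) is Corollary \ref{experp2}(b) applied verbatim to our setting. For (d2), the hypothesis $\id(\omega)<\infty$ combined with (c) gives $\pd_F(M)<\infty$ for every $M\in\C$; Proposition \ref{propsi}(a) then yields $\Phi_\X(M)=\Psi_\X(M)=\pd_F(M)$ for each $M$, so taking suprema and invoking (c) gives $\Fixdim(\C)=\Psixdim(\C)=\pd_F(\C)=\id(\omega)$.

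The only delicate point in the plan is in (a), because Proposition \ref{projExcat} is stated assuming that $\C$ has enough projectives, which is not among our hypotheses. However, that assumption is used in the proof of Proposition \ref{projExcat} solely to manufacture $\X$-precovers, and Remark \ref{LC1} supplies them directly from $\X^\wedge=\C$ and the left compatibility of $(\X,\omega)$. Everything else in the argument is a routine invocation of the earlier machinery.
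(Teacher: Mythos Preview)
Your approach is essentially the same as the paper's. The paper also proves (a) by noting that Lemma~\ref{tresdim}(a) (your Remark~\ref{LC1}) supplies the $\X$-precovers needed to run the proof of Proposition~\ref{projExcat}; for (b)--(c) it invokes the ingredients of Proposition~\ref{igualdades} directly (Lemma~\ref{primeraI}, Lemma~\ref{tresdim}(b), \cite[Proposition~2.1]{ABu}, Lemma~\ref{Lcam}) rather than the packaged statement; for (d1) it cites Theorem~\ref{experp1.5} rather than its corollary; and for (d2) it uses Remark~\ref{primeras}(a), which is equivalent to your use of Proposition~\ref{propsi}(a).

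One small correction: you say the ``only delicate point'' is in (a), but the same hypothesis of enough projectives in $\C$ appears in \emph{every} result you invoke for the later parts --- Proposition~\ref{igualdades}, Remark~\ref{XITCon}, and Corollary~\ref{experp2} all assume $\C$ is an abelian $IT$-context or has enough projectives. The fix you give for (a) does propagate (in each case the hypothesis is used only to reach Proposition~\ref{projExcat}, which you have already handled), but you should either note this explicitly or, as the paper does, cite the underlying results (Lemma~\ref{tresdim}, Theorem~\ref{experp1.5}) that do not carry the extra hypothesis.
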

\begin{dem} (a)  Using Lema \ref{tresdim} (a), we can proceed as in the proof of Proposition \ref{projExcat}.
\

(b) Let $M\in\C.$ Then,  from item (a) and Lemma \ref{primeraI}, we get that 
$$\pd_F(M)=\Omega_\X\mathrm{dim}(M)=\mathrm{t.resdim}_\X(M).$$ 
Finally, by Lemma \ref{tresdim} (b), we conclude that $\pd_F(M)=\resdim_\X(M).$
\

(c) It follows from (b), \cite[Proposition 2.1]{ABu} and Lemma \ref{Lcam}.
\

(d1) By the item (a) and the fact that $\C$ is abelian give us that $(\C,\E_F)$ is an exact $IT$-context, which is weakly idempotent complete. Thus, the item (d1) follows from Theorem \ref{experp1.5}.
\

(d2) It follows from (c) and Remark \ref{primeras} (a), since we know that $(\C,\E_F)$ is an exact $IT$-context, which is weakly idempotent complete.
\end{dem}

The following result, given by H. Holm in \cite{Holm}, is a nice criteria to get balance for 
bifunctors. We only use the version that gives us balance for the relative $\Ext$ bi-functors.

\begin{teo}\cite[Theorem 2.6]{Holm}\label{HolmBalance} 
Let $\X\subseteq\tilde{\X}$ and $\Y\subseteq\tilde{\Y}$ be classes of 
objects in an abelian category $\C$ satisfying the following conditions.
\begin{itemize}
\item[(a)] Any $M\in\tilde{\X}$ admits a proper $\X$-resolution 
$\eta_M:\quad X_\bullet(M)\to M$ such that the complex $\Hom(\eta_M,Y)$ is acyclic 
for any $Y\in\Y.$
\item[(b)] Any $N\in\tilde{\Y}$ admits a proper $\Y$-coresolution 
$\theta_N:\quad N\to Y_\bullet(N)$ such that the complex $\Hom(X,\theta_N)$ is acyclic 
for any $X\in\X.$
\end{itemize}
Then, for any $i\geq 1,$ we have the following functorial isomorphism
$$\Ext^i_\X(M,N)\simeq\Ext^i_\Y(M,N)$$ 
for all $M\in\tilde{\X}$ and $N\in\tilde{\Y}.$
\end{teo}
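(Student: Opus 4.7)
\emph{Plan.} The approach I would take is the classical double-complex argument of Cartan--Eilenberg, adapted to relative homological algebra. Fix $M \in \tilde{\X}$ and $N \in \tilde{\Y}$, together with a proper $\X$-resolution $\eta_M : X_\bullet \to M$ and a proper $\Y$-coresolution $\theta_N : N \to Y^\bullet$ as furnished by hypotheses (a) and (b). I would form the first-quadrant bicomplex
$$D^{p,q} := \Hom_\C(X_p, Y^q), \qquad p, q \geq 0,$$
with horizontal and vertical differentials induced, respectively, by $\eta_M$ and $\theta_N$ (with the standard sign convention), and then compute the cohomology $H^*(\mathrm{Tot}(D))$ of its total complex in two different ways.

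Filtering the bicomplex by columns, the first page of the resulting spectral sequence is $E_1^{p,q} = H^q(\Hom_\C(X_p, Y^\bullet))$. Since each $X_p \in \X$, hypothesis (b) tells us that $\Hom_\C(X_p, \theta_N)$ is acyclic, whence $E_1^{p,0} = \Hom_\C(X_p, N)$ and $E_1^{p,q} = 0$ for $q \geq 1$. The spectral sequence therefore collapses onto the $q = 0$ row, whose horizontal complex is precisely $\Hom_\C(X_\bullet, N)$; so $H^n(\mathrm{Tot}(D)) \simeq \Ext_\X^n(M, N)$. Dually, filtering by rows and invoking hypothesis (a) (which states that $\Hom_\C(\eta_M, Y^q)$ is acyclic for every $Y^q \in \Y$), one obtains $E_1^{0,q} = \Hom_\C(M, Y^q)$ and $E_1^{p,q} = 0$ for $p \geq 1$, and the collapse yields $H^n(\mathrm{Tot}(D)) \simeq \Ext_\Y^n(M, N)$. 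Comparing the two identifications gives the desired isomorphism, and naturality in $M$ and $N$ follows from the functoriality of the bicomplex construction together with the standard comparison lemma for proper (co)resolutions, which is built into their very definition through the acyclicity requirement.

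The step I expect to be the main subtlety is the spectral-sequence bookkeeping: verifying convergence of both spectral sequences to $H^*(\mathrm{Tot}(D))$ (automatic here because $D$ is first-quadrant) and correctly matching the $E_2$-differentials with those of $\Hom_\C(X_\bullet, N)$ and $\Hom_\C(M, Y^\bullet)$. If one prefers to bypass spectral sequences, an equivalent and very hands-on alternative is to enlarge $D$ to the augmented bicomplex obtained by adjoining the column $\Hom_\C(M, Y^\bullet)$ on the left and the row $\Hom_\C(X_\bullet, N)$ at the bottom: hypotheses (a) and (b) make every augmented row and column exact, and the acyclic-assembly (``tic-tac-toe'') lemma then exhibits explicit quasi-isomorphisms from both $\Hom_\C(X_\bullet, N)$ and $\Hom_\C(M, Y^\bullet)$ into $\mathrm{Tot}(D)$, so that the isomorphism of $\Ext$-groups follows by a direct diagram chase without any spectral-sequence machinery.
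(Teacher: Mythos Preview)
Your argument is correct and is precisely the classical Cartan--Eilenberg balance argument that Holm's paper uses. Note, however, that the present paper does not give its own proof of this statement: it is simply quoted as \cite[Theorem~2.6]{Holm} and then applied, so there is nothing in the paper to compare your proposal against beyond the citation itself.
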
 

In what follows, we apply Holm's criteria and Holm's techniques \cite{Holm} in two situations. The first one is the following.

\begin{defi} The {\bf pairs} $(\X,\omega)$ and $(\varepsilon,\Y)$ in an abelian category $\C$ are {\bf homologically compatible} if $(\X,\omega)$ is left compatible, $(\varepsilon,\Y)$ is right compatible 
and the balanced conditions hold:
\begin{itemize}
\item[(B1)] $\Ext^1_\C(\X^\wedge,\varepsilon)=0=\Ext^1_\C(\omega^\wedge,\Y),$
\item[(B2)] $\Ext^1_\C(\X,\varepsilon^\vee)=0=\Ext^1_\C(\omega,\Y^\vee).$
\end{itemize}
\end{defi}

\begin{pro}\label{balance1} Let $(\X,\omega)$ and $(\varepsilon,\Y)$ be homologically compatible pairs in an abelian category $\C.$ Then, for any $i\in\N,$ $M\in\X^\wedge$ and $N\in\Y^\vee,$ there is an 
isomorphism
$$\Ext^i_\X(M,N)\simeq\Ext^i_\Y(M,N),$$
which is functorial in both variables.
\end{pro}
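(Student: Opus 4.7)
The plan is to reduce to Holm's balance criterion, Theorem \ref{HolmBalance}, applied with $\tilde{\X}:=\X^\wedge$ and $\tilde{\Y}:=\Y^\vee$. The case $i=0$ is immediate since both sides coincide with $\Hom_\C(M,N)$, so attention focuses on $i\geq 1$. By Remark \ref{LC1} and its dual, every $M\in\X^\wedge$ admits a proper $\X$-resolution and every $N\in\Y^\vee$ admits a proper $\Y$-coresolution; thus the content of the proof is to construct such resolutions satisfying the acyclicity hypotheses (a) and (b) of Theorem \ref{HolmBalance}.

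For the proper $\X$-resolution $\eta_M$ of $M\in\X^\wedge$, I would iterate Lemma \ref{tresdim}(a). At each stage this produces a short exact sequence $0\to K_{i+1}\to X_i\to K_i\to 0$ in which $X_i\to K_i$ is an $\X$-precover (guaranteeing properness of the splice) and $\resdim_\omega(K_{i+1})\leq\resdim_\omega(K_i)-1$; consequently every syzygy $K_i$ with $i\geq 1$ lies in $\omega^\wedge$. Dually, one constructs $\theta_N$ so that every co-syzygy $L_j$ with $j\geq 1$ lies in $\varepsilon^\vee$.

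To verify condition (a), apply $\Hom_\C(-,Y)$ to the short exact sequences $0\to K_{i+1}\to X_i\to K_i\to 0$ and splice the resulting long exact sequences. A standard dimension-shift shows that the positive-degree cohomology of $\Hom_\C(\eta_M,Y)$ is controlled by $\Ext^1_\C(K_j,Y)$, which vanishes for $j\geq 1$ by the second equality in (B1), since $K_j\in\omega^\wedge$. The remaining degree-one contribution, which involves $M$ itself, is killed by combining this with the first equality in (B2), after passing along a $\varepsilon$-presentation of any $Y\in\Y$ coming from the $\Y$-projective relative generator $\varepsilon$. Condition (b) is treated dually, using the first equality in (B1) together with the second equality in (B2). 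With (a) and (b) in place, Theorem \ref{HolmBalance} furnishes the desired functorial isomorphism $\Ext^i_\X(M,N)\simeq\Ext^i_\Y(M,N)$ for $i\geq 1$.

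The subtle point I foresee is precisely the cohomology at degree one: since $M$ itself need not lie in $\omega^\wedge$, the vanishing cannot be read off from (B1) alone and one must invoke (B2) together with the $\Y$-projective generator property of $\varepsilon$, so that the four classes $\X,\omega,\varepsilon,\Y$ interlock correctly. Once this bookkeeping is set up, the rest of the argument is routine dimension-shifting and an appeal to Theorem \ref{HolmBalance}.
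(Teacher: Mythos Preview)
Your approach is the same as the paper's: reduce to Holm's criterion (Theorem~\ref{HolmBalance}) with $\tilde\X=\X^\wedge$, $\tilde\Y=\Y^\vee$, build the proper $\X$-resolution of $M$ by iterating Lemma~\ref{tresdim}(a) so that every syzygy $K_j$ with $j\geq 1$ lies in $\omega^\wedge$, and handle the degree-one step by routing through an $\varepsilon$-presentation $0\to Y'\to E\to Y\to 0$ of $Y$. The paper carries out exactly this computation, writing out the factorisation $K\to E\to Y$ explicitly and then extending $K\to E$ over $X_0$.

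There is, however, a bookkeeping slip in which balanced condition you invoke. At the degree-one step you have lifted $\alpha:K_1\to Y$ to $\alpha':K_1\to E$ (using $\Ext^1_\C(\omega^\wedge,\Y)=0$, the second equality in (B1)); to extend $\alpha'$ over the inclusion $K_1\hookrightarrow X_0$ you need $\Ext^1_\C(M,E)=0$ with $M\in\X^\wedge$ and $E\in\varepsilon$. This is the \emph{first equality in (B1)}, not (B2): the condition $\Ext^1_\C(\X,\varepsilon^\vee)=0$ from (B2) does not apply because $M$ is only in $\X^\wedge$, not in $\X$. Symmetrically, condition~(b) of Holm's criterion is verified using both halves of (B2), not a mixture of (B1) and (B2). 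So the four balanced conditions split cleanly: (B1) is used for hypothesis~(a), (B2) for hypothesis~(b). Once you correct this attribution, your argument goes through and coincides with the paper's.
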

\begin{dem} We only check condition (a) in Theorem \ref{HolmBalance}, since condition (b) follows in a similiar way. Indeed, by Remark \ref{LC1} we have that $\X^\wedge\subseteq\LR(\X).$ Moreover, for 
$M\in\X^\wedge,$ there is an exact sequence 
$$\theta:\quad 0\to K\stackrel{\varphi'}{\to} X\stackrel{\varphi}{\to}M\to 0,$$ 
with $K\in\omega^\wedge$ and $\varphi:X\to M$ an $\X$-precover. It is enough to see that 
any morphism $\alpha:K\to Y,$ with $Y\in\Y,$ factors through $\varphi':K\to X.$ Using that $\varepsilon$ 
is a relative generator in $\Y,$ we get an exact sequence
$$\eta:\quad 0\to Y'\to E\stackrel{\lambda}{\to} Y\to 0,$$
with $Y'\in\Y$ and $E\in\varepsilon.$ Applying the functor $\Hom(K,-)$ to $\eta$ and using that 
$\Ext^1_\C(\omega^\wedge,\Y)=0,$ it follows the existence of $\alpha':K\to E$ such that $\alpha=\lambda\alpha'.$ Now, we apply the funtor $\Hom(-,E)$ to $\theta.$ Hence, using that $\Ext^1_\C(\X^\wedge,\varepsilon)=0,$ there is $\alpha'':X\to E$ such that $\alpha'=\alpha''\varphi'.$ Therefore
$\lambda\alpha''\varphi'=\lambda\alpha'=\alpha,$ proving that $\alpha$ factors trough $\varphi'.$
\end{dem}

\begin{teo}\label{Sbalance1} Let $(\X,\omega)$ and $(\varepsilon,\Y)$ be homologically compatible pairs in an abelian category $\C$  such that 
$\X^\wedge=\C=\Y^\vee.$ Then, for the additive subfunctors $F:=F_\X$ and 
$G:=F^\Y,$ the following statements hold.
\begin{itemize}
\item[(a)] For any $M,N\in\C$ there is an 
isomorphism
$$\Ext^i_\X(M,N)\simeq\Ext^i_\Y(M,N),$$
which is functorial in both variables.
\item[(b)] $\pd_G(M)=\pd_F(M)=\resdim_\X(M)$ for any $M\in\C.$
\item[(c)] $\id_F(N)=\id_G(N)=\coresdim_\Y(M)$ for any $N\in\C.$
\item[(d)] $\pd\,(\varepsilon)=\id_G(\C)=\pd_F(\C)=\id\,(\omega).$
\item[(e)] Let $\C$ be skeletally small and Krull-Schmidt. Then, we have the following:  
  \begin{itemize}
  \item[(e1)] $\fpd_F(\C)\leq\Fixdim\,(\C)\leq\Psixdim\,(\C)\leq\coresdim_\Y(\X),$
  \item[(e2)] $\finid_G(\C)\leq\Phi^\Y\mathrm{dim}\,(\C)\leq\Psi^\Y\mathrm{dim}\,(\C)\leq\resdim_\X(\Y),$
  \item[(e3)] in case $\id\,(\omega)$ is finite, we get the equalities
  $$\Fixdim\,(\C)=\Psixdim\,(\C)=\coresdim_\Y(\X)=\resdim_\X(\Y)=\Phi^\Y\mathrm{dim}\,(\C)=$$ $$=\Psi^\Y\mathrm{dim}\,(\C)=\id\,(\omega). $$
   
 \end{itemize}
\end{itemize}
\end{teo}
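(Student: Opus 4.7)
The overall plan is to reduce every claim to an application of Proposition~\ref{balance1}, Proposition~\ref{ProBal1} (and its dual), together with Lemma~\ref{Lcam} and a standard shifting argument. Once the balance isomorphism in (a) is in hand, all remaining assertions become essentially bookkeeping: the balance lets us freely replace $\Ext_\X^i$ by $\Ext_\Y^i$, which forces $F$-dimensions and $G$-dimensions of a single object to agree.

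I would start with (a). Under the hypothesis $\X^\wedge=\C=\Y^\vee$, every $M\in\C$ lies in $\X^\wedge$ and every $N\in\C$ lies in $\Y^\vee$, so Proposition~\ref{balance1} applies verbatim and gives the functorial isomorphism $\Ext_\X^i(M,N)\simeq\Ext_\Y^i(M,N)$ for all $i\geq 0$. For (b), Proposition~\ref{ProBal1}(b) applied to $(\X,\omega)$ yields $\pd_F(M)=\resdim_\X(M)$, while the identification $\Ext_F^i(M,-)=\Ext_\X^i(M,-)\simeq\Ext_\Y^i(M,-)=\Ext_G^i(M,-)$ from (a) gives $\pd_F(M)=\pd_G(M)$. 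Part (c) is dual, using the $\mathrm{op}$-version of Proposition~\ref{ProBal1}(b) applied to $(\varepsilon,\Y)$ together with the balance in the first variable.

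For (d), Proposition~\ref{ProBal1}(c) gives $\pd_F(\C)=\id(\omega)$, and its dual gives $\id_G(\C)=\pd(\varepsilon)$. The remaining link $\pd_F(\C)=\id_G(\C)$ follows from the shifting identity
\[
\pd_F(\C)=\sup_{M,N\in\C}\sup\{j:\Ext_F^j(M,N)\neq 0\}=\sup_{M,N\in\C}\sup\{j:\Ext_G^j(M,N)\neq 0\}=\id_G(\C),
\]
where the middle equality is (a). For (e1), Proposition~\ref{ProBal1}(d1) produces the chain $\fpd_F(\C)\leq\Fixdim(\C)\leq\Psixdim(\C)\leq\id_F(\X)$, and (c) rewrites the last term as $\coresdim_\Y(\X)$; (e2) is dual via Proposition~\ref{ProBal1}(d1)$^{op}$ and (b).

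Finally, for (e3), assume $\id(\omega)<\infty$. Proposition~\ref{ProBal1}(d2) immediately gives $\Fixdim(\C)=\Psixdim(\C)=\id(\omega)$, and by (d) we have $\pd(\varepsilon)=\id(\omega)<\infty$, so the dual of (d2) gives $\Phi^{\Y}\mathrm{dim}(\C)=\Psi^{\Y}\mathrm{dim}(\C)=\pd(\varepsilon)=\id(\omega)$. To insert $\coresdim_\Y(\X)$ into this chain of equalities I would combine (e1) with the estimate $\coresdim_\Y(\X)=\id_F(\X)\leq\id_F(\C)=\pd_F(\C)=\id(\omega)$ (the middle equality again being the shifting identity used in (d)); together with $\Psixdim(\C)\leq\coresdim_\Y(\X)$ from (e1) and $\Psixdim(\C)=\id(\omega)$, this sandwich forces $\coresdim_\Y(\X)=\id(\omega)$. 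The symmetric argument with (e2) handles $\resdim_\X(\Y)$.

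The only step requiring genuine care is the shifting identity that identifies $\pd_F(\C)$ with $\id_F(\C)$ (and likewise for $G$) purely from the balance; the rest is an orchestration of Proposition~\ref{ProBal1} and its dual. I do not anticipate a real obstacle, since Proposition~\ref{balance1} does all the conceptual work; the main risk is notational slippage between $\E_F$-dimensions written as $\pd_F,\id_F$ and the resolution/coresolution dimensions $\resdim_\X,\coresdim_\Y$, which must be tracked consistently.
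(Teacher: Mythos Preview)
Your proposal is correct and follows essentially the same route as the paper: part (a) via Proposition~\ref{balance1}, parts (b)--(d) via Proposition~\ref{ProBal1} and its dual combined with the balance from (a), and parts (e1)--(e3) via Proposition~\ref{ProBal1}(d) together with (b), (c), (d) of the theorem itself. The only difference is cosmetic: you spell out the sandwich argument for $\coresdim_\Y(\X)$ and $\resdim_\X(\Y)$ in (e3) explicitly, whereas the paper compresses this into ``follows from (d), (e1) and (e2)''.
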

\begin{dem} (a) It follows from Proposition \ref{balance1}. 
\

(b) It follows from (a) and Proposition \ref{ProBal1} (b).
\

(c) The proof of (c)  is dual to the given in (b).
\

(d)  The equalities $\pd\,(\varepsilon)=\id_G(\C)$ and $\pd_F(\C)=\id\,(\omega)$ can be obtained from Proposition \ref{ProBal1} (c) and its dual. Finally, the equality $\id_G(\C)=\pd_F(\C)$ follow from (a).
\

(e1),  (e2)  They be obtained from (b), (c), Proposition \ref{ProBal1} (c) and its dual.
\

(e3) It follows from (d),  (e1) and (e2).
\end{dem}

\begin{pro}\label{ProSbalance2} Let $(\X,\Y)$ be a complete hereditary cotorsion pair in an abelian category $\C,$ with 
enough projectives. Then, the following statements hold true, for $F:=F_\X$ and $\omega:=\X\cap\Y.$
\begin{itemize}
\item[(a)] The pair $(\X,\omega)$ is left compatible.
\item[(b)] The pair $(\C,\E_F)$ is an exact category with enough projectives and $\Q(\E_F)=\X.$
\item[(c)] $\pd_F(M)=\resdim_\X(M)$ for any $M\in\C.$
\item[(d)] $\fpd_F(\C)=\pd_F\,(\X^\wedge)=\id_{\X^\wedge}\,(\omega)\leq\id\,(\omega).$
\item[(e)] Let $\C$ be skeletally small and Krull-Schmidt. Then, we have that 
 \begin{itemize}
 \item[(e1)] $\fpd_F(\C)\leq\Fixdim\,(\C)\leq\Psixdim\,(\C)\leq\id_F(\X),$
 \item[(e2)] $\X^\wedge=\C$ and $\id\,(\omega)<\infty$ $\Rightarrow$ $\Fixdim\,(\C)=\Psixdim\,(\C)=\id\,(\omega).$
 \end{itemize}
\end{itemize}
\end{pro}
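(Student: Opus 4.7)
The plan is to bootstrap each claim from results already in place in Sections 1--4. For \textbf{(a)}, I would verify the two ingredients of left compatibility directly from the cotorsion-pair hypothesis. Closure of $\X$ under extensions and direct summands is automatic from $\X={}^{\perp_1}\Y$ via the long Ext sequence. That $\omega=\X\cap\Y$ is $\X$-injective follows because, by hereditariness, $\Ext^i_\C(X,W)=0$ for all $X\in\X$, $W\in\Y$ and $i\geq 1$, and $\omega\subseteq\Y$. To see $\omega$ is a relative cogenerator in $\X$, I would invoke completeness of the cotorsion pair: each $X\in\X$ sits in an exact sequence $0\to X\to Y'\to X'\to 0$ with $Y'\in\Y$, $X'\in\X$, and since $\X$ is extension closed, $Y'\in\X\cap\Y=\omega$.

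Claim \textbf{(b)} then reduces to Proposition \ref{projExcat}: one only needs to check that $\X$ is a generator precovering class in $\C$. It is a generator because, under our assumptions, every projective lies in $\X={}^{\perp_1}\Y$; it is precovering because, for any $C\in\C$, completeness provides $0\to Y_C\to X_C\to C\to 0$ with $X_C\in\X$, $Y_C\in\Y$, and applying $\Hom_\C(X',-)$ for $X'\in\X$ kills $\Ext^1_\C(X',Y_C)$, so $X_C\twoheadrightarrow C$ is an $\X$-precover. Claims \textbf{(c)} and \textbf{(d)} are then immediate from Proposition \ref{igualdades}(b), whose hypotheses have just been verified in (a).

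For \textbf{(e1)}, note that abelianness of $\C$ makes it weakly idempotent complete and, combined with the skeletally small Krull-Schmidt assumption and (b), turns $(\C,\E_F)$ into an exact $IT$-context that is weakly idempotent complete. Theorem \ref{experp1.5}(b), applied with $\Q(\E_F)=\X$, then yields the chain $\fpd_F(\C)\leq\Fixdim(\C)\leq\Psixdim(\C)\leq\id_F(\X)$. For \textbf{(e2)}, assume $\X^\wedge=\C$ and $\id(\omega)<\infty$: by (c) every object of $\C$ has finite $F$-projective dimension, so $\fpd_F(\C)=\pd_F(\C)$, and by (d) this equals $\id_{\X^\wedge}(\omega)=\id(\omega)$. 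Combining with Remark \ref{primeras}(a), which supplies $\Psixdim(\C)\leq\pd_F(\C)$, I obtain the sandwich $\id(\omega)=\fpd_F(\C)\leq\Fixdim(\C)\leq\Psixdim(\C)\leq\pd_F(\C)=\id(\omega)$, forcing all terms to coincide.

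I do not anticipate a serious obstacle: the whole statement is a distillation of earlier machinery, the subtlest point being the verification in (a) that $\omega$ is a relative cogenerator in $\X$, which hinges crucially on $\X$ being closed under extensions so that the middle term $Y'$ of the completeness sequence lands in $\omega$.
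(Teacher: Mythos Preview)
Your proposal is correct and follows essentially the same route as the paper's proof: parts (a)--(d) are derived from Proposition~\ref{projExcat} and Proposition~\ref{igualdades} after checking that $\X$ is a precovering generator and $(\X,\omega)$ is left compatible, while (e1) comes from Theorem~\ref{experp1.5} and (e2) from (d) combined with Remark~\ref{primeras}(a). Your write-up simply unpacks more of the routine verifications (e.g.\ why $\omega$ is a relative cogenerator in $\X$) that the paper leaves implicit.
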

\begin{dem} (a)  By the definition of complete cotorsion pairs, it can be seen that 
$\omega$ is a relative cogenerator in $\X.$ Furthermore,  from the fact that $\Ext^i_\C(X,Y)=0$ for any 
$i\geq 1,$ $X\in\X$ and $Y\in\Y,$ we get that $(\X,\omega)$ is left compatible.
\

(b) It follows from Proposition \ref{projExcat}, since $\X$ is a precovering generator in $\C.$
\

(c), (d) They follows from Proposition \ref{igualdades}, since $\X$ is a generator precovering class in $\C.$ 
\

(e1) By (b) we get that the pair $(\C,\E_F)$ is an exact $IT$-context which is weakly idempotent complete. Hence, the item (e1) follows from Theorem \ref{experp1.5}.
\

(e2) Since $\X^\wedge=\C$ and $\id\,(\omega)<\infty,$ it follows from (d) that $\pd_F(\C)=\id\,(\omega)<\infty.$ Thus, the equalities in (e2) follow from Remark \ref{primeras} (a). 
\end{dem}

\begin{cor}\label{CoroSbalance2} Let $(\X,\Y)$ be a complete hereditary cotorsion pair in an abelian skeletally small and Krull-Schmidt category $\C,$ with 
enough projectives and injectives. Then, for $\omega:=\X\cap\Y,$ the following statements hold true.
\begin{itemize}
\item[(a)] $\X^\wedge=\C$ and $\id\,(\omega)<\infty$ $\quad\Rightarrow\quad$ $\Fixdim\,(\C)=\Psixdim\,(\C)=\id\,(\omega).$
\item[(b)] $\Y^\vee=\C$ and $\pd\,(\omega)<\infty$ $\quad\Rightarrow\quad$ $\Phi^{\Y}\mathrm{dim}\,(\C)=\Psi^{\Y}\mathrm{dim}=\pd\,(\omega).$
\end{itemize}
\end{cor}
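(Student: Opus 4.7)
My plan is to deduce both parts as essentially immediate applications of Proposition \ref{ProSbalance2} — the first part directly in $\C$, and the second by invoking the duality between exact $IT$-contexts and exact $IT^{op}$-contexts (and the symmetric roles of $F_\X$ and $F^\Y$).

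For part (a), all hypotheses of Proposition \ref{ProSbalance2} are already in force: $(\X,\Y)$ is a complete hereditary cotorsion pair in an abelian, skeletally small, Krull–Schmidt category $\C$ with enough projectives. The extra assumptions $\X^\wedge=\C$ and $\id(\omega)<\infty$ are precisely the hypotheses of item (e2) of that proposition, whose conclusion $\Fixdim(\C)=\Psixdim(\C)=\id(\omega)$ is exactly what we want. So (a) is a one-line application.

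For part (b), I would pass to the opposite category $\C^{op}$. The pair $(\Y,\X)$ is a complete hereditary cotorsion pair in $\C^{op}$, which is again abelian, skeletally small and Krull–Schmidt; since $\C$ has enough injectives, $\C^{op}$ has enough projectives, so Proposition \ref{ProSbalance2} applies. The intersection $\Y\cap\X=\omega$ plays the role of $\omega$ in $\C^{op}$, the assumption $\Y^\vee=\C$ translates to $\Y^\wedge=\C^{op}$, and $\pd(\omega)$ computed in $\C$ coincides with $\id(\omega)$ computed in $\C^{op}$. Item (e2), applied to $(\C^{op},\Y,\X)$, then yields $\Phi_{\Y}\mathrm{dim}(\C^{op})=\Psi_{\Y}\mathrm{dim}(\C^{op})=\id_{\C^{op}}(\omega)$. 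Now the definitions at the end of the section on ``exact categories from abelian categories'' identify the exact structure $\E_{F_\Y}$ on $\C^{op}$ with $\E_{F^\Y}=\E_G$ on $\C$, and consequently the relative $IT$-functions $\Phi_\Y,\Psi_\Y$ in $\C^{op}$ with $\Phi^\Y,\Psi^\Y$ in $\C$. Translating the equalities back gives $\Phi^\Y\mathrm{dim}(\C)=\Psi^\Y\mathrm{dim}(\C)=\pd(\omega)$.

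The only potential obstacle is the bookkeeping for the duality in (b): one has to be sure that a left compatible pair $(\X,\omega)$ in $\C$ becomes a right compatible pair in $\C^{op}$ (and symmetrically for $(\varepsilon,\Y)$), and that this exchange identifies the subfunctors $F_\X$ and $F^\Y$ and hence the relative $IT$-functions attached to them. This is built into the very definition of the subfunctor $F^\Y$ given earlier — it is the opposite-category counterpart of $F_\X$ — so the verification is purely formal and no genuine new argument is required.
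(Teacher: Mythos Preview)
Your proof is correct and matches the paper's own argument, which simply says ``It follows from Proposition \ref{ProSbalance2} and its dual.'' You have just unpacked explicitly what ``its dual'' means via the passage to $\C^{op}$.
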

\begin{dem} It follows from Proposition \ref{ProSbalance2} and its dual. 
\end{dem}

The second situation where we can apply Holm's criterion is for some special kind of cotorsion pairs. To start with, we introduce the following notion.

\begin{defi} Two {\bf complete cotorsion pairs} $(\X,\mathcal{Z})$ and $(\mathcal{Z},\Y)$ in an abelian category $\C$ are {\bf homologically compatible} if 
 $\omega:=\X\cap\mathcal{Z}=\Q(\C)$ and $\varepsilon:=\mathcal{Z}\cap\Y=\I(\C).$ 
\end{defi}

\begin{lem}\label{LSbalance2} Let $(\X,\mathcal{Z})$ and $(\mathcal{Z},\Y)$ be homologically 
compatible complete cotorsion pairs in $\C.$ Then, for any $M,N\in\C$ there is an 
isomorphism
$$\Ext^i_\X(M,N)\simeq\Ext^i_\Y(M,N),$$
which is functorial in both variables.
\end{lem}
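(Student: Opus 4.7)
The plan is to apply Holm's balance criterion (Theorem \ref{HolmBalance}) with $\tilde{\X}=\tilde{\Y}=\C.$ By symmetry in the hypotheses, it suffices to verify condition (a): every $M\in\C$ admits a proper $\X$-resolution $\eta_M$ such that $\Hom(\eta_M,Y)$ is acyclic for every $Y\in\Y;$ condition (b) will follow by the dual argument.

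First I would construct the resolution using the completeness of the cotorsion pair $(\X,\mathcal{Z}).$ Applied iteratively (first to $M,$ then to each successive syzygy), it produces an exact sequence $\cdots\to X_1\to X_0\to M\to 0$ with each $X_i\in\X$ and each syzygy $K_i$ lying in $\mathcal{Z}$ for $i\geq 0.$ This is in fact a proper $\X$-resolution: for any $X'\in\X,$ applying $\Hom_\C(X',-)$ to each short exact sequence $0\to K_{i+1}\to X_i\to K_i\to 0$ preserves exactness, since $\Ext^1_\C(X',K_i)=0$ because $K_i\in\mathcal{Z}=\X^{\perp_1}.$ In particular, each such sequence lies in $\E_{F_\X},$ so the resolution is an $\E_{F_\X}$-projective resolution of $M$ and can be used to compute $\Ext^i_\X(M,-).$

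Next I would verify the acyclicity, which in this context amounts to showing $\Ext^p_\X(M,Y)=0$ for all $p\geq 1$ and $Y\in\Y.$ Dimension-shifting along the resolution yields $\Ext^p_\X(M,Y)\simeq\Ext^1_\X(K_{p-1},Y)$ for $p\geq 1,$ so it is enough to prove that $\Ext^1_\X(Z,Y)=0$ for every $Z\in\mathcal{Z}$ and $Y\in\Y.$ Given such a $Z,$ pick a short exact sequence $0\to L\to X\to Z\to 0$ from completeness of $(\X,\mathcal{Z}),$ with $X\in\X$ and $L\in\mathcal{Z}.$ Since $Z\in\mathcal{Z}={}^{\perp_1}\Y$ we have $\Ext^1_\C(Z,Y)=0,$ so the long exact sequence of $\Ext_\C$ shows that $\Hom_\C(X,Y)\to\Hom_\C(L,Y)$ is surjective, hence $\Ext^1_\X(Z,Y)=0.$

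Condition (b) is proved dually: completeness of $(\mathcal{Z},\Y)$ produces a proper $\Y$-coresolution $0\to N\to Y^0\to Y^1\to\cdots$ whose cosyzygies all lie in $\mathcal{Z},$ and the vanishing $\Ext^1_\C(\X,\mathcal{Z})=0$ coming from $(\X,\mathcal{Z})$ being a cotorsion pair forces $\Ext^q_\Y(X',N)=0$ for every $X'\in\X$ and $q\geq 1.$ Theorem \ref{HolmBalance} then delivers the required functorial isomorphism $\Ext^i_\X(M,N)\simeq\Ext^i_\Y(M,N).$ The main conceptual point, and the step to be careful with, is the double role of $\mathcal{Z}$ as both $\X^{\perp_1}$ and ${}^{\perp_1}\Y;$ this is what forces the $\X$-syzygies (resp.\ $\Y$-cosyzygies) to land in a class that is simultaneously Ext-orthogonal to $\Y$ (resp.\ to $\X$), which is precisely the vanishing needed to run the balance argument for every $M,N\in\C$ rather than only on $\X^\wedge$ and $\Y^\vee$ as in Proposition \ref{balance1}.
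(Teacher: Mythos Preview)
Your overall strategy---verify Holm's criterion by building the proper $\X$-resolution from completeness of $(\X,\mathcal{Z})$ with syzygies in $\mathcal{Z}$---matches the paper's. But the dimension-shifting step has a genuine gap, not just an indexing slip.

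With your convention $K_0=\Ker(X_0\to M)$, the correct shift is $\Ext^p_\X(M,Y)\simeq \Ext^1_\X(K_{p-2},Y)$ for $p\geq 2$, so your vanishing $\Ext^1_\X(Z,Y)=0$ for $Z\in\mathcal{Z}$ handles all $p\geq 2$. For $p=1$, however, the long exact sequence attached to $0\to K_0\to X_0\to M\to 0$ only gives $\Ext^1_\X(M,Y)\simeq\Coker\bigl(\Hom(X_0,Y)\to\Hom(K_0,Y)\bigr)$, \emph{not} $\Ext^1_\X(K_0,Y)$. So you must still show that every $\alpha:K_0\to Y$ extends to $X_0$. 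Your argument for $\Ext^1_\X(Z,Y)=0$ used $\Ext^1_\C(Z,Y)=0$, which holds because $Z\in\mathcal{Z}={}^{\perp_1}\Y$; but here the cokernel sits inside $\Ext^1_\C(M,Y)$, and $M$ is arbitrary, so that vanishing is unavailable.

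This is precisely where the ``homologically compatible'' hypothesis enters, and your write-up never uses it. The paper's argument (transplanted from Proposition~\ref{balance1}) runs as follows: since $(\mathcal{Z},\Y)$ is complete and $\Y$ is closed under extensions, there is an exact sequence $0\to Y'\to E\to Y\to 0$ with $Y'\in\Y$ and $E\in\mathcal{Z}\cap\Y=\I(\C)$. Because $K_0\in\mathcal{Z}={}^{\perp_1}\Y$, the map $\alpha$ lifts to $\alpha':K_0\to E$; and because $E$ is \emph{injective}, $\alpha'$ extends over $K_0\rightarrowtail X_0$. Composing with $E\to Y$ gives the required extension. Without $\mathcal{Z}\cap\Y=\I(\C)$ this step fails: one would need $\Ext^1_\C(M,E)=0$ for arbitrary $M$, which forces $E$ injective. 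So the double role of $\mathcal{Z}$ alone is not enough; the identification $\varepsilon=\I(\C)$ (and dually $\omega=\Q(\C)$) is exactly what closes the $p=1$ gap.
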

\begin{dem} By the definition of complete cotorsion pairs, it can be seen that 
$\omega:=\X\cap\mathcal{Z}$ is a relative cogenerator in $\X;$ and furthermore, the class 
$\varepsilon:=\mathcal{Z}\cap\Y$ is a relative generator in $\Y.$ Thus, the same proof given 
in Proposition \ref{balance1} works here, but now using different reasons (the properties of homologically compatible cotorsion pairs). 
\end{dem}

\begin{teo}\label{Sbalance2} Let $(\X,\mathcal{Z})$ and $(\mathcal{Z},\Y)$ be homologically 
compatible complete hereditary cotorsion pairs in an abelian category $\C,$ with enough projectives and injectives. Then, for the additive subfunctors $F:=F_\X$ and 
$G:=F^\Y,$ the following statements hold true.
\begin{itemize}
\item[(a)] $\pd_G(M)=\pd_F(M)=\resdim_\X(M)$ for any $M\in\C.$
\item[(b)] $\id_F(N)=\id_G(N)=\coresdim_\Y(M)$ for any $N\in\C.$
\item[(c)] $\pd_{\Y^\vee}\,(\I(\C))=\fid_G\,(\C)=\fpd_F\,(\C)=\id_{\X^\wedge}\,(\Q(\C)).$
\item[(d)] Let $\C$ be skeletally small and Krull-Schmidt. Then
  \begin{itemize}
  \item[(d1)] $\fpd_F(\C)\leq\Fixdim\,(\C)\leq\Psixdim\,(\C)\leq\coresdim_\Y(\X),$ 
  \item[(d2)] $\finid_G(\C)\leq\Phi^\Y\mathrm{dim}\,(\C)\leq\Psi^\Y\mathrm{dim}\,(\C)\leq\resdim_\X(\Y),$
  \item[(d3)] $\Y\subseteq\X^\wedge$ $\quad\Rightarrow\quad$ $\finid_G(\C)=\Phi^\Y\mathrm{dim}\,(\C)=\Psi^\Y\mathrm{dim}\,(\C)=\resdim_\X(\Y),$
   \item[(d4)] $\X\subseteq\Y^\vee$ $\quad\Rightarrow\quad$ $\fpd_F(\C)=\Fixdim\,(\C)=\Psixdim\,(\C)=\coresdim_\Y(\X).$
 \end{itemize}
\end{itemize}
\end{teo}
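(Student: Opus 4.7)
The plan is to extract all four items from the balance isomorphism of Lemma \ref{LSbalance2}, combined with Proposition \ref{ProSbalance2} applied to the complete hereditary cotorsion pair $(\X,\mathcal{Z})$ and its dual applied to $(\mathcal{Z},\Y)$. The homological compatibility $\omega:=\X\cap\mathcal{Z}=\Q(\C)$ and $\varepsilon:=\mathcal{Z}\cap\Y=\I(\C)$ is what makes the ``relative cogenerator'' and ``relative generator'' classes produced by Proposition \ref{ProSbalance2} collapse to the classical projectives and injectives appearing in items (c) and (d).

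\emph{For (a) and (b):} The balance isomorphism $\Ext^i_\X(M,N)\simeq\Ext^i_\Y(M,N)$, valid for all $M,N\in\C$ by Lemma \ref{LSbalance2}, immediately yields $\pd_F(M)=\pd_G(M)$ and $\id_F(N)=\id_G(N)$ from the definitions of these relative dimensions as suprema of non-vanishing degrees. Proposition \ref{ProSbalance2}(c) applied to $(\X,\mathcal{Z})$ then gives $\pd_F(M)=\resdim_\X(M)$, and its dual applied to $(\mathcal{Z},\Y)$ gives $\id_G(N)=\coresdim_\Y(N)$.

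\emph{For (c):} Item (a) identifies $\mathcal{P}^{<\infty}_F(\C)=\X^\wedge$, so $\fpd_F(\C)=\pd_F(\X^\wedge)$, and Proposition \ref{ProSbalance2}(d) rewrites this as $\id_{\X^\wedge}(\omega)=\id_{\X^\wedge}(\Q(\C))$; dually $\fid_G(\C)=\pd_{\Y^\vee}(\varepsilon)=\pd_{\Y^\vee}(\I(\C))$. The middle equality $\fpd_F(\C)=\fid_G(\C)$ is the subtle step: via balance one has $\fpd_F(\C)=\fpd_G(\C)$ and $\fid_G(\C)=\fid_F(\C)$, so it suffices to show $\fpd_G(\C)=\fid_G(\C)$. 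To do this, for $M\in\X^\wedge$ with $\pd_G(M)=k$ witnessed by $\Ext^k_G(M,N)\neq 0$ for some $N\in\C$, I would take a proper $\Y$-coresolution of $N$ and use the resulting long exact sequence of $\Ext_G$-groups (together with the vanishing $\Ext^{\geq 1}_G(-,Y)=0$ for $Y\in\Y$) to realise the non-vanishing at a term of $\Y^\vee$, with a symmetric argument for the reverse inequality.

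\emph{For (d):} (d1) follows by applying Proposition \ref{ProSbalance2}(e1) to $(\X,\mathcal{Z})$ and rewriting the bound $\id_F(\X)$ as $\coresdim_\Y(\X)$ using (b); (d2) is dual. For (d3), assuming $\Y\subseteq\X^\wedge$, item (a) yields $\pd_G(Y)=\resdim_\X(Y)<\infty$ for every $Y\in\Y$, hence $\Y\subseteq\mathcal{P}^{<\infty}_G(\C)$ and $\resdim_\X(\Y)=\pd_G(\Y)\leq\fpd_G(\C)=\fpd_F(\C)=\fid_G(\C)$ by balance and (c); sandwiching this between the upper bound $\Psi^\Y\mathrm{dim}(\C)\leq\resdim_\X(\Y)$ from (d2) produces the four equalities. (d4) is the dual. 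The main obstacle I expect is the middle equality in (c): although Lemma \ref{LSbalance2} balances the relative $\Ext$ groups, the expressions $\id_{\X^\wedge}(\Q(\C))$ and $\pd_{\Y^\vee}(\I(\C))$ coming from Proposition \ref{ProSbalance2}(d) involve classical $\Ext_\C$, so one must reconcile the two viewpoints in order to transfer non-vanishing information from the left-hand class $\X^\wedge$ to the right-hand class $\Y^\vee$.
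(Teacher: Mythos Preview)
Your treatment of (a), (b), (d1), (d2), (d3), (d4) is essentially identical to the paper's: the same ingredients (Lemma~\ref{LSbalance2}, Proposition~\ref{ProSbalance2} and its dual) are invoked in the same way, and your argument for (d3) via $\resdim_\X(\Y)\leq\resdim_\X(\X^\wedge)=\fpd_F(\C)=\fid_G(\C)$ is exactly the paper's.

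For (c) there is a genuine gap in your write-up. Your reduction to showing $\fpd_G(\C)=\fid_G(\C)$ is correct as a reformulation, but the long-exact-sequence sketch you propose does not work as stated: starting from $\Ext^k_G(M,N)\neq 0$ and taking cosyzygies of $N$ along a $\Y$-coresolution only shifts the degree \emph{down}, producing $\Ext^{k-j}_G(M,\Sigma^j N)\neq 0$, so you never manufacture an object of $\Y^\vee$ with $\id_G\geq k$ unless $N$ itself already lies in $\Y^\vee$. The ``symmetric argument for the reverse inequality'' has the same defect. The paper's own proof of (c) simply asserts that the middle equality $\fid_G(\C)=\fpd_F(\C)$ ``is a consequence of Lemma~\ref{LSbalance2}'' and gives no further detail, so you are right to flag this as the subtle point; but your proposed mechanism does not close it, and you should either find a witness $N$ that is forced to lie in $\Y^\vee$ (for instance by exploiting that for $M\in\X^\wedge$ the non-vanishing $\Ext^n_F(M,-)$ can already be detected on an object of $\mathcal{Z}$, and then arguing on the $\mathcal{Z}$ side) or treat the paper's one-line claim as the intended argument.
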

\begin{dem}  (a)  It follows from Lemma \ref{LSbalance2} and Proposition \ref{ProSbalance2} (c).
\

(b) It is dual to the proof of (a).
\

(c) The equality $\fid_G\,(\C)=\fpd_F\,(\C)$ is a consequence of Lemma \ref{LSbalance2}. To complete the proof, we use Proposition 
\ref{ProSbalance2} (d) and its dual.
\

(d) Let $\C$ be skeletally small and Krull-Schmidt. Then, by (b) and Proposition \ref{ProSbalance2} (e1), we get (d1). Finally, the proof of (d2) is dual. 
\

The proof of (d3) can be done as follows. Firstly, the item (a) say us that $\fpd_F(\C)=\resdim_\X(\X^\wedge)$ since 
$\Q^{<\infty}_F(\C)=\X^\wedge.$ Thus, from (c) it follows that $\fid_G(\C)=\resdim_\X(\X^\wedge).$ If we assume that $\Y\subseteq\X^\wedge,$ so we get that $\resdim_\X(\Y)\leq \resdim_\X(\X^\wedge)=\fid_G(\C).$ Thus, we conclude (d3) from (d2). Finally, note that the proof of (d4) is similar to the one given in  (d3).
\end{dem}

\begin{remark}\label{Rkbalance2} Let $\C$ be an abelian category with enough projectives and injectives.
\begin{itemize}
\item[(a)]\cite{GR} For a cotorsion pair $(\X,\Y),$  we have that: 
$(\X,\Y)$ is hereditary $\Leftrightarrow$ $\X$ is resolving $\Leftrightarrow$ $\Y$ is coresolving.
\item[(b)]\cite{BR} Let $\Q(\C)$ and $\I(\C)$ be functorially finite in $\C.$ Then, by  \cite[Theorem 3.2]{BR} and its proof there, we get the following statements. 
 \begin{itemize}
 \item[(b1)] If $(\X,\mathcal{Z})$ and $(\mathcal{Z},\Y)$ are complete cotorsion pairs and 
 $\mathcal{Z}$ is resolving and coresolving, then we have that $\mathcal{Z}$ is functorially finite, 
 $\X\cap\mathcal{Z}=\Q(\C)$ and $\mathcal{Z}\cap\Y=\I(\C).$ 
 \item[(b2)] Let $\C$ be a KS-category and $\mathcal{Z}$ be functorially finite, resolving and coresolving.  Then, the pairs  
 $({}^{\perp_1}\mathcal{Z},\mathcal{Z})$ and $(\mathcal{Z},\mathcal{Z}^{\perp_1})$ are hereditary complete cotorsion pairs. Moreover, by (b1), we get that these pairs are homologically compatible.
 \end{itemize}
 \item[(c)] In \cite[Theorem 4.5]{BR} there is a proof of Lemma \ref{LSbalance2}  under stronger 
asumptions. The authors assume that $\Q(\C)$ and $\I(\C)$ are functorially finite in $\C$ 
and take hereditary cotorsion pairs (see (b) above). Furthermore they use the machinary of Tate-Vogel 
cohomological bifunctors to get the result. Our proof is rather elementary and it is not necessary to assume the conditions given in (b1).
\end{itemize}
\end{remark}

\section{Relative $n$-Igusa-Todorov categories}

In this section, we generalise the notion of $n$-Igusa-Todorov algebras ($n$-IT algebras, for short), which were introduced by J. Wei 
in \cite{Wei}. This generalisation will be made into the setting of strong $ IT$-contexts. 
\

We recall, that for 
a strong abelian $ IT$-context  $\C$ and a class $\X\subseteq \C,$ which is a precovering 
generator class in $\C,$ the class $\X$ induces a  functor $F:=F_\X.$ In such a case the pair $(\C,F)$ is an strong exact $ IT$-context. Furthermore, if $\X'$ is a precovering generator class in $\C,$ such that $\X\subseteq\X',$ then $(\C,F_{\X'},F_{\X})$ is a strong triple $ IT$-context (see Definition \ref{DefIT-triple}).
\

Let $(\C,\E)$ be an $ IT$-context. We say that a given class $\mathcal{Z},$ of objects in $\C,$ is of finite  type if there is some object $M\in\C$ such that 
$\add\,(\mathcal{Z})=\add\,(M).$ In such a case, we will say that $\mathcal{Z}$ is of finite type $M.$ Note that, if the class 
$\mathcal{Z}$ is of finite type $M$, then by  Proposition \ref{propsi} (b)  we get that  $\Psiedim\,(\mathcal{Z})=\Psi_\E\,(M)<\infty.$ 

\begin{defi} An $n$-IT category is a strong triple $ IT$-context $(\C,\E',\E)$ satisfying the following two 
conditions:
\begin{itemize}
\item[(a)] for some $n\in\N$ and any $M\in\C$ there is an $\E$-exact sequence 
$$ V_1\rightarrowtail V_0\twoheadrightarrow \Omega_\E^n(M)\oplus P,$$ 
with $V_1,V_0\in\Q(\E')$ and $P\in\Q(\E);$
\item[(b)] the dimension $\Psiedim\,(\Q(\E'))$ is finite.
\end{itemize}
\end{defi}

\begin{remark} Let $\Lambda$ be an artin algebra, which is an $n$-IT algebra and has $n$-IT module $V$ in the sense of 
\cite[Definition 2.2]{Wei}. Consider the length-category $\C:=\modu\,(\Lambda)$ and the classes 
 $\X:=\proj\,(\Lambda)$ and $\X':=\add\,(V\oplus{}_\Lambda\Lambda).$ Then, the triple 
 $(\C,F_{\X'},F_{\X})$ is an $n$-IT category. 
\end{remark}

The result \cite[Theorem 2.3]{Wei} can be generalised for $n$-IT categories as follows. 

\begin{teo}\label{ITC1} If $(\C,\E',\E)$ is an $n$-IT category, then 
$$\findim_\E(\C)\leq n+1+\Psiedim\,(\Q(\E'))<\infty.$$
\end{teo}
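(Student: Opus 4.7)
The plan is to apply Theorem \ref{desigualdad} to the distinguished $\E$-exact sequence granted by condition (a) of an $n$-IT category, and then use $n$-fold dimension shifting to pass from $\Omega_\E^n(M)$ back to $M.$

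First I would fix an arbitrary $M\in\mathcal{P}^{<\infty}_\E(\C),$ so that $\pd_\E(M)<\infty.$ By condition (a) there exists an $\E$-exact sequence
\[ V_1\rightarrowtail V_0\twoheadrightarrow \Omega_\E^n(M)\oplus P, \]
with $V_0,V_1\in\Q(\E')$ and $P\in\Q(\E).$ From Lemma \ref{primeraI} and the $\E$-acyclic complex $\Omega_\E^n(M)\rightarrowtail P_{n-1}\to\cdots\to P_0\twoheadrightarrow M$ obtained by truncating an $\E$-projective resolution of $M,$ one gets $\pd_\E(\Omega_\E^n(M))<\infty$ and the standard shift inequality $\pd_\E(M)\leq n+\pd_\E(\Omega_\E^n(M)).$ Since $P\in\Q(\E),$ also $\pd_\E(\Omega_\E^n(M)\oplus P)=\pd_\E(\Omega_\E^n(M))<\infty.$

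Next I would invoke Theorem \ref{desigualdad} for the strong exact $IT$-context $(\C,\E),$ which is available because the triple $IT$-context is assumed strong. Applied to the sequence above, it gives
\[ \pd_\E(\Omega_\E^n(M)\oplus P)\leq \Psi_\E(V_0\oplus V_1)+1. \]
Because $\Q(\E')$ is closed under finite direct sums, $V_0\oplus V_1\in\Q(\E'),$ so $\Psi_\E(V_0\oplus V_1)\leq\Psiedim(\Q(\E')).$ Combining with the shift inequality,
\[ \pd_\E(M)\leq n+\pd_\E(\Omega_\E^n(M)\oplus P)\leq n+1+\Psiedim(\Q(\E')). \]
Taking the supremum over $M\in\mathcal{P}^{<\infty}_\E(\C)$ yields $\findim_\E(\C)=\fpd_\E(\C)\leq n+1+\Psiedim(\Q(\E')),$ which is finite by condition (b).

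The main obstacle is purely bookkeeping: verifying that $\pd_\E(\Omega_\E^n(M)\oplus P)$ is finite (so Theorem \ref{desigualdad} actually applies) and that the direct sum $V_0\oplus V_1$ stays in $\Q(\E')$ so its $\Psi_\E$-value is controlled by $\Psiedim(\Q(\E')).$ Once these small points are in place, the proof is a one-line composition of Theorem \ref{desigualdad} with dimension shifting; no Fitting argument or new syzygy analysis beyond what Theorem \ref{desigualdad} already encapsulates is needed.
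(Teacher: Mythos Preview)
Your proposal is correct and follows essentially the same argument as the paper: apply Theorem \ref{desigualdad} to the $\E$-exact sequence $V_1\rightarrowtail V_0\twoheadrightarrow \Omega_\E^n(M)\oplus P$ and combine with the shift $\pd_\E(M)\leq n+\pd_\E(\Omega_\E^n(M)).$ You are slightly more explicit than the paper in checking that $\pd_\E(\Omega_\E^n(M)\oplus P)=\pd_\E(\Omega_\E^n(M))<\infty$ and that $V_0\oplus V_1\in\Q(\E'),$ but the route is identical.
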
 
\begin{dem} Assume that $(\C,\E',\E)$ is an $n$-IT category. Then, for any $M\in \mathcal{P}^{<\infty}_{\E}(\C),$ there 
is an $\E$-exact sequence $V_1\rightarrowtail V_0\twoheadrightarrow \Omega_\E^n(M)\oplus P,$ with $V_1,V_0\in\Q(\E')$ and $P\in\Q(\E).$ Hence, by 
Theorem \ref{desigualdad} and Lemma \ref{primeraI}, we get 
$$\pd_\E(M)\leq n+\pd_\E(\Omega_\E^n(M))\leq n+1+\Psi_\E\,(V_1\oplus V_0);$$
proving the result, since $\Psi_\E\,(V_1\oplus V_0)\leq \Psiedim\,(\Q(\E')).$
\end{dem}

\begin{pro}\label{ITC2} Let $(\C,\E',\E)$ be a strong triple $ IT$-context such that $\Psiedim\,(\Q(\E'))$ is 
finite. Let us consider the following conditions, for some $n\in\N:$ 
\begin{itemize}
\item[(a)]   $\pd_{\E'}(\Omega^{n}_\E(\C))\leq 1,$
\item[(b)] $\pd_{\E'}(\C)\leq n+1$  and  $\Omega_{\E}^n(M)|\Omega_{\E'}^n(M)$ for any $M\in\C.$
\end{itemize}
If one of the two conditions {\rm (a)} and {\rm (b)} holds true, then $(\C,\E',\E)$ is an $n$-IT category.
\end{pro}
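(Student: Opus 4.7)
The plan is to verify the two axioms in the definition of an $n$-IT category for $(\C,\E',\E)$. The second axiom, requiring $\Psiedim(\Q(\E'))$ to be finite, is given by hypothesis, so everything reduces to producing, for each $M\in\C$, an $\E$-exact sequence
\[V_1\rightarrowtail V_0\twoheadrightarrow \Omega_\E^n(M)\oplus P\]
with $V_0,V_1\in\Q(\E')$ and $P\in\Q(\E)$.

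Under hypothesis (a), I would apply $\pd_{\E'}(\Omega_\E^n(M))\leq 1$ to the object $\Omega_\E^n(M)$. By Lemma \ref{primeraI} this object then admits an $\E'$-projective presentation $V_1\rightarrowtail V_0\twoheadrightarrow \Omega_\E^n(M)$ with $V_0,V_1\in\Q(\E')$ (allowing $V_1=0$ when the relative dimension is $0$). Because $\E'\subseteq\E$ in any triple IT-context, this sequence is automatically $\E$-exact, and the choice $P:=0\in\Q(\E)$ finishes the case.

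Under hypothesis (b), I would first combine $\pd_{\E'}(M)\leq n+1$ with Lemma \ref{primeraI} to fix an $\E'$-projective resolution $V_{n+1}\rightarrowtail V_n\to\cdots\to V_0\twoheadrightarrow M$ with every $V_i\in\Q(\E')$. Its $n$-th syzygy truncation is the $\E'$-exact (hence $\E$-exact) sequence $V_{n+1}\rightarrowtail V_n\twoheadrightarrow \Omega_{\E'}^n(M)$, and the divisibility $\Omega_\E^n(M)\mid\Omega_{\E'}^n(M)$ produces a decomposition $\Omega_{\E'}^n(M)\cong \Omega_\E^n(M)\oplus P$ for some object $P$. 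Setting $V_1:=V_{n+1}$ and $V_0:=V_n$ then yields the desired $\E$-exact sequence, provided $P$ lies in $\Q(\E)$.

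The main obstacle is exactly this last step. My intended route exploits that $\E'\subseteq\E$ entails $\Q(\E)\subseteq\Q(\E')$, while Proposition \ref{indenelcociente}(a) identifies $\Q(\E)$ with the zero objects of the $\E$-stable category $\uC$. Reading the divisibility hypothesis modulo $\E$-projectives forces $\Omega_{\E'}^n(M)\simeq\Omega_\E^n(M)$ in $\uC$, so Schanuel's Lemma \ref{schanuelL} yields $Q,Q'\in\Q(\E)$ with $\Omega_{\E'}^n(M)\oplus Q\cong\Omega_\E^n(M)\oplus Q'$ in $\C$. Direct-summing the split $\E$-exact sequence $0\rightarrowtail Q\twoheadrightarrow Q$ (whose admissible epic is the identity) with the truncation above then delivers the $\E$-exact sequence
\[V_{n+1}\rightarrowtail V_n\oplus Q\twoheadrightarrow \Omega_\E^n(M)\oplus Q'\]
with $V_{n+1},\,V_n\oplus Q\in\Q(\E')$ and $Q'\in\Q(\E)$, completing the verification.
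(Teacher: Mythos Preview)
Your treatment of case (a) is correct and coincides with the paper's argument. The issue is your handling of case (b).

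The paper does not argue (b) directly at all: it simply observes that (b) implies (a). Indeed, from $\pd_{\E'}(M)\leq n+1$ one gets $\pd_{\E'}(\Omega_{\E'}^n(M))\leq 1$, and since $\Omega_\E^n(M)\mid\Omega_{\E'}^n(M)$ and $\pd_{\E'}$ does not increase on direct summands, this yields $\pd_{\E'}(\Omega_\E^n(M))\leq 1$, which is precisely condition (a). This reduction avoids any question about the complement $P$.

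Your direct route for (b), by contrast, has a genuine gap. You write $\Omega_{\E'}^n(M)\cong\Omega_\E^n(M)\oplus P$ and then assert that ``reading the divisibility hypothesis modulo $\E$-projectives forces $\Omega_{\E'}^n(M)\simeq\Omega_\E^n(M)$ in $\uC$.'' But in $\uC$ the decomposition still reads $\Omega_{\E'}^n(M)\simeq\Omega_\E^n(M)\oplus P$, and concluding that these two sides agree is exactly the statement $P\in\Q(\E)$ you are trying to prove; the reasoning is circular. Your subsequent appeal to Schanuel's Lemma fails for the same underlying reason: Lemma \ref{schanuelL} compares two $\E$-syzygies, i.e.\ kernels of admissible epics from objects of $\Q(\E)$. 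The object $\Omega_{\E'}^n(M)$ arises as the kernel of an admissible epic from some $V_n\in\Q(\E')$, and since the inclusion goes $\Q(\E)\subseteq\Q(\E')$ (not the other way), $V_n$ need not lie in $\Q(\E)$. Hence $\Omega_{\E'}^n(M)$ is not, in general, an $\E$-syzygy of $M$, and Schanuel does not give the comparison you need. The clean fix is exactly the paper's: deduce (a) from (b) and use your already-correct argument for (a).
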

\begin{dem} Note that condition (b) implies (a). Assume that the conditions (a)  hold true. Then, for any $M\in\C,$ we have 
an $\E'$-exact sequence 
$$\eta:\quad V_1\rightarrowtail V_0\twoheadrightarrow \Omega_{\E}^n(M),$$
 with $V_0, V_1 \in\Q(\E').$ But, we know $\E'\subseteq\E$  and hence $\eta$ is also an $\E$-exact sequence. Therefore,  we get that $\eta$ satisfies the desired conditions.
\end{dem}

\begin{cor}\label{ITC3}  If a strong triple $ IT$-context $(\C,\E',\E)$ satisfies that $\pd_{\E'}(\C)\leq 1$ and $\Psiedim\,(\Q(\E'))$ is finite,     then 
$(\C,\E',\E)$ is a $0$-IT category.
\end{cor}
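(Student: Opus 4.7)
The plan is to derive Corollary~\ref{ITC3} as an immediate application of Proposition~\ref{ITC2} specialised to the case $n=0$. Since the corollary's hypotheses are that $(\C,\E',\E)$ is a strong triple $IT$-context with $\pd_{\E'}(\C)\leq 1$ and $\Psiedim\,(\Q(\E'))$ finite, it only remains to verify condition~(a) of Proposition~\ref{ITC2} for $n=0$.

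First I would observe that, by the conventional interpretation of the $0$-th $\E$-syzygy, $\Omega^{0}_{\E}(\C)=\C$. Therefore condition~(a) of Proposition~\ref{ITC2} with $n=0$ reads exactly
\[
\pd_{\E'}(\Omega^{0}_{\E}(\C))=\pd_{\E'}(\C)\leq 1,
\]
which is precisely the standing hypothesis of the corollary. The remaining assumption of Proposition~\ref{ITC2}, namely $\Psiedim\,(\Q(\E'))<\infty$, is also part of the hypothesis.

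Applying Proposition~\ref{ITC2} with $n=0$ yields directly that $(\C,\E',\E)$ is a $0$-IT category, which is the conclusion. I expect no real obstacle here: the corollary is essentially a specialisation of Proposition~\ref{ITC2}, so the proof is a one-line deduction. If desired, one could instead unfold the argument by noting that, for any $M\in\C$, the hypothesis $\pd_{\E'}(M)\leq 1$ provides an $\E'$-exact sequence $V_1\rightarrowtail V_0\twoheadrightarrow M$ with $V_1,V_0\in\Q(\E')$, and since $\E'\subseteq\E$ this is also an $\E$-exact sequence (take $P=0$), verifying condition~(a) of the definition of a $0$-IT category directly.
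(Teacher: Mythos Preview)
Your proof is correct and follows essentially the same route as the paper: both derive the corollary from Proposition~\ref{ITC2} specialised to $n=0$. The only cosmetic difference is that you verify condition~(a) of Proposition~\ref{ITC2} (using $\Omega_\E^0(\C)=\C$), whereas the paper verifies condition~(b) (using $\Omega_{\E'}^0(M)=M=\Omega_\E^0(M)$); since the proof of Proposition~\ref{ITC2} itself reduces~(b) to~(a), the two verifications are equivalent here.
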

\begin{dem} For the case $n=0,$ the condition (b) in Proposition \ref{ITC2} holds easily, since $\Omega_{\E'}^0(M)=M=
\Omega_{\E}^0(M).$
\end{dem}

In what follows, we construct an example of a $k$-$ IT$-category for some non-negative integer $k.$  Let $\Lambda$ be an artin algebra, and let $M$ be an Auslander generator in $\modu\,(\Lambda).$ Assume that  $\Lambda$  is not 
of finite representation type. We recall that, for  an Auslander generator $M\in\modu\,(\Lambda),$ it follows that 
$\gldim_{G}(\Lambda)=\repdim\,(\Lambda)-2$ is finite,  where $G:=F_{\add\,(M)}.$ Therefore, the positive integer 
$$t:=\min\{n\in\N\;:\;\Omega^n_G\,(\modu\,(\Lambda)) \text{ is of finite type}\}$$ is well defined. Note that the class 
$\X:=\add\,(M)\oplus\Omega^t_G\,(\modu\,(\Lambda))$ is of finite type, i. e. there exist $N\in\modu\,(\Lambda)$ such that 
$\add(\,\X)=\add\,(N).$ Therefore, by Proposition \ref{propsi} it follows that 
$\Psi_{\add\,(M)}\mathrm{dim}\,(\X)=\Psi_{\add\,(M)}\,(N)<\infty.$ 
\

Consider the subfunctor  $F:=F_\X.$ We assert that the strong triple $ IT$-context $(\modu\,(\Lambda),\E_F,\E_G)$ is an $(t-1)-IT$ category. 
Indeed, let $X\in\modu\,(\Lambda).$ Then, we have a $G$-exact sequence 
$\eta\;0\to\Omega^t_G(X)\to M_0\to \Omega^{t-1}_G(X)\to 0,$ for some $M_0\in\add\,(M).$ Note that $\eta$ is the desired exact sequence since $\Q(\E_F)=\X.$

\section{Examples}
Throughout this section $\Lambda$  denotes an  artin $R$-algebra, where $R$ is a commutative artinian ring. The category of finitely
generated left $\Lambda$-modules is denoted by $\modu\,(\Lambda)$ and the full
subcategory of finitely generated projective $\Lambda$-modules by
$\proj\,(\Lambda).$ Similarly, $\inj\,(\Lambda)$ stands for the subcategory of finitely generated injective $\Lambda$-modules. 
Let $E$ be the injective envelope of  the quotient $R$-module $R/\rad(R).$  We recall 
that the functor $D:=\Hom_R(-,E):\modu\,(\Lambda)\to\modu\,(\Lambda^{op})$ is a duality of categories. Moreover, $\modu\,(\Lambda)$ is an abelian category with enough projectives and injectives. In what follows, we give applications of the developed theory.

\subsection{Tilting and Cotilting}
\

We use a concrete cotorsion pair induced by a finitely generated cotilting $\Lambda$-module $C,$ where $\Lambda$ is an artin algebra \cite{Mi}. Recall that $C$ is {\bf cotilting} if $\id\,(C)$ is finite, $\Ext^i_{\Lambda} (C,C)=0$ for any 
$i\geq 1,$ and $\inj\,(\Lambda)\subseteq (\add\,(C))^\wedge.$ 

\begin{teo} For a cotilting $\Lambda$-module $C\in\modu\,(\Lambda)$ and the subfunctor $F:=F_{{}^\perp C},$ the following statements hold true.
\begin{itemize}
\item[(a)] $\fpd\,(\Lambda)\leq 1+s+\Psi\mathrm{dim}\,({}^\perp C\oplus\Omega_F\Omega^s(\Q^{<\infty}(\Lambda)))$ for any $s\in\N,$
\item[(b)] $\Phi_{{}^\perp C}\mathrm{dim}\,(\Lambda)=\Psi_{{}^\perp C}\mathrm{dim}\,(\Lambda)=\id\,(C).$
\end{itemize}
\end{teo}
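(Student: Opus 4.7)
The plan is to reduce both statements to results already established in the paper, after identifying the relevant cotorsion-theoretic data attached to the cotilting module $C$. First, I would invoke the classical theory of cotilting modules (Miyashita, Happel, Auslander--Reiten) to produce a complete hereditary cotorsion pair $({}^\perp C,\mathcal{Y})$ in $\modu\,(\Lambda),$ whose core $\omega := {}^\perp C\cap \mathcal{Y}$ equals $\add\,(C),$ and for which $({}^\perp C)^\wedge=\modu\,(\Lambda)$ (the latter being a consequence of $\id\,(C)<\infty$ together with $\inj\,(\Lambda)\subseteq(\add\,(C))^\wedge,$ via the Auslander--Buchweitz formalism recalled in Section~1). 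In particular ${}^\perp C$ is a precovering generator class in $\modu\,(\Lambda),$ closed under extensions, with $\add\,(C)$ as an ${}^\perp C$-injective relative cogenerator.

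For (a), I would apply Theorem~\ref{clasico1} to the triple $(\modu\,(\Lambda),\E_F,\E_{\max}),$ where $F:=F_{{}^\perp C}$ and $\E_{\max}$ is the ordinary exact structure on $\modu\,(\Lambda).$ Since $\modu\,(\Lambda)$ is a length-category (hence a strong abelian $IT$-context by the remark after Definition~\ref{IT-context}), and both $\E_F\subseteq \E_{\max}$ give $IT$-contexts by Remark~\ref{XITCon}, this triple is a strong triple $IT$-context. By Proposition~\ref{projExcat} we have $\Q(\E_F)={}^\perp C,$ while $\Omega_{\E_F}=\Omega_F$ and $\Omega_{\E_{\max}}=\Omega$ are the $F$-relative and ordinary syzygy functors. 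Theorem~\ref{clasico1} then yields precisely the inequality in (a).

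For (b), I would apply Corollary~\ref{CoroSbalance2}(a) to the complete hereditary cotorsion pair $({}^\perp C,\mathcal{Y}).$ Here $\C=\modu\,(\Lambda)$ is abelian, skeletally small, Krull--Schmidt, and has enough projectives and injectives; the hypotheses $({}^\perp C)^\wedge=\modu\,(\Lambda)$ and $\id\,(\omega)=\id\,(\add\,(C))=\id\,(C)<\infty$ are both valid by Step~1. The corollary delivers the equalities $\Phi_{{}^\perp C}\mathrm{dim}\,(\Lambda)=\Psi_{{}^\perp C}\mathrm{dim}\,(\Lambda)=\id\,(\omega)=\id\,(C).$

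The only genuinely non-formal step is the verification that $({}^\perp C,({}^\perp C)^{\perp_1})$ is a complete hereditary cotorsion pair with $\omega=\add\,(C)$ and $({}^\perp C)^\wedge=\modu\,(\Lambda);$ everything else is bookkeeping on top of Theorem~\ref{clasico1} and Corollary~\ref{CoroSbalance2}. I expect this cotilting set-up to be standard enough that a short citation (for example to Miyashita or to the Auslander--Buchweitz/Auslander--Reiten presentation already cited in the paper) will suffice, but if a self-contained argument is required, the main obstacle is checking completeness of the cotorsion pair, which amounts to producing, for each $M\in\modu\,(\Lambda),$ an exact sequence $0\to Y\to X\to M\to 0$ with $X\in{}^\perp C$ and $Y\in\mathcal{Y};$ this follows from $\inj\,(\Lambda)\subseteq(\add\,(C))^\wedge$ by a standard dimension-shifting argument via Lemma~\ref{tresdim}.
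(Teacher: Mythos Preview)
Your proposal is correct and follows essentially the same approach as the paper: the paper cites \cite{Re} for the fact that $({}^\perp C,({}^\perp C)^\perp)$ is a complete hereditary cotorsion pair with $({}^\perp C)^\wedge=\modu\,(\Lambda),$ and then invokes Theorem~\ref{clasico1} for (a) and Corollary~\ref{CoroSbalance2}(a) for (b), exactly as you do. Your version is slightly more detailed in making explicit the identification $\omega=\add\,(C)$ (hence $\id\,(\omega)=\id\,(C)$) and in spelling out why $(\modu\,(\Lambda),\E_F,\E_{\max})$ is a strong triple $IT$-context, but these are just the bookkeeping steps the paper leaves implicit.
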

\begin{dem} By \cite{Re} we get that $({}^\perp C, ({}^\perp C)^\perp)$ is a complete hereditary cotorsion pair such that 
$({}^\perp C)^\wedge=\modu\,(\Lambda).$ Therefore, the result follows from Theorem \ref{clasico1} and Corollary \ref{CoroSbalance2} (a).
\end{dem}

The dual of the above result also holds, and for the convenience of the reader, we state it in the following remark. For doing so, we recall   
the notion of tilting module. It is said that $T$ is a {\bf tilting} $\Lambda$-module 
if $\pd\,(T)$ is finite, $\Ext^i_{\Lambda} (T,T)=0$ for any $i\geq 1,$ and $\proj\,(\Lambda)\subseteq (\add\,(T))^\vee.$

\begin{remark} For a tilting $\Lambda$-module $T\in\modu\,(\Lambda)$ and the subfunctor $G:=F^{T^\perp },$  the following statements hold true. 
\begin{itemize}
\item[(a)] $\fid\,(\Lambda)\leq 1+s+\Psi^{\inj(\Lambda)}\mathrm{dim}\,(T^\perp \oplus\Omega^{-1}_G\Omega^{-s}(\I^{<\infty}(\Lambda)))$ for any $s\in\N,$
\item[(b)] $\Phi^{T^\perp}\mathrm{dim}\,(\Lambda)=\Psi^{T^\perp}\mathrm{dim}\,(\Lambda)=\pd\,(T).$
\end{itemize}
\end{remark}

\subsection{Standarly stratified algebras.}  For $\Lambda$-modules $M$ and $N$,
$\Tr_M\,(N)$ is the trace of $M$ in $N$, that is, $\Tr_M\,(N)$ is
the $\Lambda$-submodule of $N$ generated by the images of all
morphisms from $M$ to $N$.
\

Let $n$ be the rank of the Grothendieck group $K_0\,(\Lambda)$. We fix a linear order $\leq$ on the set $[1,n]:=\{1,2,\cdots,n\}$ and a representative set 
$ P=\{P(i)\;:\;i\in[1,n]\}$ containing one module of each iso-class in $\proj\,(\Lambda).$ The set of standard $\Lambda$-modules is $\Delta=\{
\Delta(i):i\in[1,n]\},$ where
$\Delta(i)= P(i)/\Tr_{\oplus_{j>i}\,P(j)}\,(P(i))$. 
\

Let $\F(\Delta)$ be the subcategory of $\modu\,(\Lambda)$ consisting of the
$\Lambda$-modules having a $\Delta$-filtration, that is,
a filtration  $0=M_0\subseteq M_1\subseteq\cdots\subseteq M_s=M$ with factors $M_{i+1}/M_i$
isomorphic to a module in $\Delta$ for all $i$. The algebra $\Lambda$ is
a standardly stratified algebra, with respect to the linear
order $\leq$ on the set $[1,n]$, if
$\proj\,(\Lambda)\subseteq\F(\Delta)$ (see \cite{ADL,Dlab,CPS}). A standardly stratified algebra $\Lambda$ is called 
quasi-hereditary if $\End_\Lambda\,(\Delta(i))$ is a division ring for any $i\in[1,n].$ We also consider the class of 
$\Delta$-injective modules $\I(\Delta):=\F(\Delta)^{\perp_1}.$
\

It is well known, see in  \cite[Theorem 1.6, Proposition 2.2]{AHLU},   that
 $(\F(\Delta),\I(\Delta))$ is a complete hereditary cotorsion pair in $\modu\,(\Lambda).$ Furthermore, there exists 
 a basic tilting module $T$ (known as the characteristic tilting module) such that $\F(\Delta)\cap\I(\Delta)=\add\,(T).$

\begin{teo}\label{experp2.3} For a  standarly stratified algebra $\Lambda,$ the following statements hold true, where 
$F:=F_{\F(\Delta)}$ and $T$ is the characteristic tilting module.
\begin{itemize}
\item[(a)] $\resdim_{\F(\Delta)}(\Q^{<\infty}_F(\Lambda))\leq\Phi_{\F(\Delta)}\mathrm{dim}\,(\Lambda)\leq\Psi_{\F(\Delta)}\mathrm{dim}\,(\Lambda)\leq\id\,(\Lambda).$
\item[(b)] If $\Lambda$ is a quasi-hereditary, then
 \begin{itemize}
 \item[(b1)] 
 $\Phi_{\F(\Delta)}\mathrm{dim}\,(\Lambda)=\Psi_{\F(\Delta)}\mathrm{dim}\,(\Lambda)=\id\,(T).$
 \item[(b2)] 
 $\Phi^{\I(\Delta)}\mathrm{dim}\,(\Lambda)=\Psi^{\I(\Delta)}\mathrm{dim}\,(\Lambda)=\pd\,(T).$
 \end{itemize}
\end{itemize}
  
\end{teo}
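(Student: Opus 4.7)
The plan rests on the complete hereditary cotorsion pair $(\F(\Delta),\I(\Delta))$ in $\modu\,(\Lambda)$ with $\F(\Delta)\cap\I(\Delta)=\add\,(T)$, which is recorded in the paragraph preceding the statement. This is the only input from the theory of standardly stratified algebras that I would use.

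For part (a), I would verify the hypotheses of Corollary \ref{experp2.1} for $\X:=\F(\Delta)$: this class is a generator (since by definition of standardly stratified $\proj\,(\Lambda)\subseteq\F(\Delta)$), it is precovering (an immediate consequence of completeness of the cotorsion pair, together with $\Q(\Lambda)\subseteq\F(\Delta)$), it is closed under extensions (as the left class of a hereditary cotorsion pair, cf.\ Remark \ref{Rkbalance2}(a)), and it admits $\omega:=\add\,(T)$ as an $\F(\Delta)$-injective relative cogenerator in $\F(\Delta)$ (again from the complete hereditary cotorsion pair structure). Corollary \ref{experp2.1} then yields the first three inequalities of (a) with the upper bound $\id\,(\F(\Delta))$. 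To close the gap to $\id\,(\Lambda)$, every $M\in\F(\Delta)$ carries a finite $\Delta$-filtration, whence $\id\,(M)\leq\max_i\id\,(\Delta(i))$; each $\id\,(\Delta(i))$ is then estimated via the short exact sequence $0\to\Tr_{\oplus_{j>i}P(j)}(P(i))\to P(i)\to\Delta(i)\to 0$ and the bound $\id\,(P(i))\leq\id\,({}_\Lambda\Lambda)$.

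For parts (b1) and (b2), assuming $\Lambda$ quasi-hereditary, I would appeal directly to Corollary \ref{CoroSbalance2}. For (b1), the two classical facts needed are $\F(\Delta)^\wedge=\modu\,(\Lambda)$ for quasi-hereditary algebras (every module has finite $\Delta$-resolution dimension) and $\id\,(\omega)=\id\,(T)<\infty$ (since $T$ is a tilting module). These supply the hypotheses of Corollary \ref{CoroSbalance2}(a) and deliver $\Phi_{\F(\Delta)}\mathrm{dim}\,(\Lambda)=\Psi_{\F(\Delta)}\mathrm{dim}\,(\Lambda)=\id\,(T)$. The proof of (b2) is the dual argument: $\I(\Delta)^\vee=\modu\,(\Lambda)$ and $\pd\,(\omega)=\pd\,(T)<\infty$ feed into Corollary \ref{CoroSbalance2}(b) to give the stated equalities.

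The main obstacle is controlling the upper bound in (a): passing from $\id\,(\F(\Delta))$ to $\id\,({}_\Lambda\Lambda)$ requires the filtration argument to be executed carefully, and the estimate for each $\id\,(\Delta(i))$ is the most delicate step since the kernel in the defining short exact sequence is built from traces of higher projectives rather than from projectives themselves, so a careful induction on the stratifying order of $[1,n]$ is needed to bound the relevant Ext groups in terms of $\id\,({}_\Lambda\Lambda)$.
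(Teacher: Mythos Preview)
Your treatment of (b1) and (b2) coincides with the paper's: both invoke Corollary~\ref{CoroSbalance2} using that a quasi-hereditary algebra has finite global dimension (hence $\F(\Delta)^\wedge=\modu\,(\Lambda)=\I(\Delta)^\vee$) and that $\omega=\add\,(T)$ has finite injective and projective dimension. For part (a) the paper also reaches the bound $\id\,(\F(\Delta))$ via the cotorsion pair (it uses Corollary~\ref{experp2.2}(a) rather than Corollary~\ref{experp2.1}, but in the present situation these yield the same conclusion).

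The one genuine difference, and the place where your proposal has a gap, is the passage from $\id\,(\F(\Delta))$ to $\id\,({}_\Lambda\Lambda)$. The paper does \emph{not} argue this directly: it quotes \cite[Corollary~2.5(b)]{MMS3}, which gives the equality $\id\,(\F(\Delta))=\id\,({}_\Lambda\Lambda)$ for standardly stratified algebras. Your proposed ``careful induction on the stratifying order'' does not close the gap as written. From the exact sequence $0\to K_i\to P(i)\to \Delta(i)\to 0$ one only gets $\id\,(\Delta(i))\leq\max\bigl(\id\,({}_\Lambda\Lambda),\,\id\,(K_i)+1\bigr)$, and since $K_i$ is filtered by the $\Delta(j)$ with $j>i$, the inductive hypothesis feeds back a bound that increases by~$1$ at every step; starting from $\id\,(\Delta(n))=\id\,(P(n))\leq\id\,({}_\Lambda\Lambda)$ you end up with $\id\,(\Delta(i))\leq\id\,({}_\Lambda\Lambda)+(n-i)$, not with $\id\,({}_\Lambda\Lambda)$. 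The equality $\id\,(\F(\Delta))=\id\,({}_\Lambda\Lambda)$ is a nontrivial feature of standardly stratified algebras and requires an argument beyond the elementary filtration estimate you sketch; either cite \cite{MMS3} as the paper does, or supply that stronger argument explicitly.
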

\begin{proof}  (a) Since $\Lambda$ is a  standardly stratified algebra, we get from  \cite[Corollary 2.5 (b)]{MMS3}  that 
$\id\,(\F(\Delta))=\id\,({}_\Lambda\Lambda).$ Thus, (a)   follows from Corollary \ref{experp2.2} (a).
\

(b) Let $\Lambda$ be quasi-hereditary. It  is well known that, in this case, $\Lambda$ has finite global dimension. Therefore 
$\F(\Delta)^\wedge=\modu\,(\Lambda)=\I(\Delta)^\vee.$ Using the fact that $(\F(\Delta),\I(\Delta))$ is a complete 
hereditary cotorsion pair in $\modu\,(\Lambda)$ such that 
$\F(\Delta)\cap\I(\Delta)=\add\,(T),$ the result follows from Corollary \ref{CoroSbalance2}.
\end{proof}

\subsection{Gorenstein homological algebra in $\modu\,(\Lambda)$}
\

In order to give  applications  of the developed theory, using Gorenstein homological algebra,  we need to recall the notion of Gorenstein projective (respectively, injective) 
objects in abelian categories. Let $\C$ be an abelian category with enough projectives and injectives. An acyclic complex 
$$\eta_X:\quad \cdots\to X^i\to X^{i+1}\to\cdots$$ of projective (respectively, injective) objects in $\C$ is called totally acyclic if the complex 
$\Hom_\C(\eta,X)$ (respectively, $\Hom_\C(X,\eta,)$ is acyclic for any projective (respectively, injective) object $X\in\C.$ The classes of Gorenstein projective and Gorenstein injective objects were introduced by Enoch and Jenda in \cite{EJ2}, building on the notion of a finitely generated module 
of $G$-dimension zero introduced by Auslander in \cite{A1}. A Gorenstein projective object $C\in\C$ is of the form $C=\Coker\,(X^{-1}\to X^0)$ for some totally acyclic complex $\eta_X$ of projective objects. Dually, a Gorenstein injective object $C\in\C$ is of the form $C=\Ker\,(X^0\to X^1)$ for some totally acyclic complex $\eta_X$ of injective objects. We denote by $\GP(\C)$ the class of Gorenstein projective objects and by $\GI(\C)$ the class of Gorenstein injective objects in the abelian category $\C.$

Let $\Lambda$ be an artin algebra. We consider the  category $\Mod\,(\Lambda)$ of all left 
$\Lambda$-modules and the full subcategory $\modu\,(\Lambda)$ of all finitely generated left 
$\Lambda$-modules. 

By taking $\C:=\Mod\,(\Lambda),$ we get the  Projective $\Lambda$-modules 
 $\Proj(\Lambda):=\Q(\C)$ and the class $\GProj(\Lambda):=\GP(\C)$ of  Gorenstein Projective $\Lambda$-modules. Furthermore, for any $M\in\Mod\,(\Lambda),$ we have the  Gorenstein Projective 
 dimension $\GPd(M):=\resdim_{\GP(\C)}(M)$ and  $\GProj^{<\infty}(\Lambda):=\{M\in\C\;:\;
 \GPd(M)<\infty\}.$
 
Similarly, replacing $\Mod\,(\Lambda)$ by $\modu\,(\Lambda),$ we get the  finitely generated projective $\Lambda$-modules $\proj(\Lambda),$  the class $\Gproj(\Lambda)$ of the small Gorenstein projective $\Lambda$-modules, the small Gorenstein projective 
 dimension $\Gpd(M):=\resdim_{\Gproj(\Lambda)}(M),$ for any $M\in\modu\,(\Lambda),$ and the class 
 $\Gproj^{<\infty}(\Lambda):=\{M\in\C\;:\;
 \Gpd(M)<\infty\}.$ Dually, we can define the similar notions  for injectives and Gorenstein injectives.

\begin{remark}\label{basicosG} For an artin algebra $\Lambda,$ we have the following properties.
\begin{itemize}
\item[(a)] By \cite[Proposition 1.4]{PZ}, we get 
$$\GProj(\Lambda)\cap\modu\,(\Lambda)=\Gproj(\Lambda).$$
\item[(b)] Using (a) and \cite[Remark 2.12]{Holm2}, it follows
$$\GPd(M)=\Gpd(M)\quad\;\text{for any}\;M\in\modu\,(\Lambda).$$
\item[(c)] $\GProj^{<\infty}(\Lambda)\cap\modu\,(\Lambda)=\Gproj^{<\infty}(\Lambda).$
\item[(d)] Let $\C:=\modu\,(\Lambda)$  and the exact structure $\E=\E_{\max}$ of all exact sequences in $\C.$ Then, by Corollary \ref{experp1} and the fact that $\Gproj(\Lambda)\subseteq{}^\perp\Lambda,$ we get (see also in \cite{LM})
$$\Phidim(\Gproj(\Lambda))=0=\Psidim(\Gproj(\Lambda)).$$
\end{itemize}
\end{remark}

In the following result, we need the definition of the small Gorenstein global dimension of an artin 
algebra $\Lambda.$ This is defined as follows 
$$\Ggldim(\Lambda):=\Gpd\,(\modu\,(\Lambda)).$$
Note that $\Gfindim(\Lambda^{op})=\Gid\,(\modu\,(\Lambda)).$ We recall that an artin 
algebra $\Lambda$ is Gorenstein if the injective dimensions $\id\,({}_\Lambda\Lambda)$ and 
$\id\,(\Lambda_\Lambda)$ are finite.

\begin{teo}\label{GorAl} Let $\Lambda$ be an artin Gorenstein algebra. Consider the additive 
subfunctors $F:=F_{\Gproj(\Lambda)}$ and 
$G:=F^{\Ginj(\Lambda)}.$ Then, the following statements hold true.
\begin{itemize}
\item[(a)] For any $M,N\in\modu\,(\Lambda)$ there is an 
isomorphism
$$\Ext^i_{\Gproj(\Lambda)}(M,N)\simeq\Ext^i_{\Ginj(\Lambda)}(M,N),$$
which is functorial in both variables.
\item[(b)] $\pd_G(M)=\pd_F(M)=\Gpd(M)$ for any $M\in\C.$
\item[(c)] $\id_F(M)=\id_G(M)=\Gid(M)$ for any $M\in\C.$
\item[(d)] $\id\,(\Lambda_\Lambda)=\Ggldim\,(\Lambda^{op})=\Ggldim\,(\Lambda)=\id\,({}_\Lambda\Lambda).$
\item[(e)] $\Phi_{\Gproj(\Lambda)}\mathrm{dim}(\Lambda)=\PsiGpdim(\Lambda)=\Gid(\Gproj(\Lambda))=\Gpd(\Ginj(\Lambda))=\\=
\Phi^{\Ginj(\Lambda)}\mathrm{dim}(\Lambda)=\Psi^{\Ginj(\Lambda)}\mathrm{dim}(\Lambda)=\id\,({}_\Lambda\Lambda).$
\end{itemize}
\end{teo}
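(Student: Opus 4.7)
The plan is to apply Theorem~\ref{Sbalance2} to the pair of cotorsion pairs $(\Gproj(\Lambda),\mathcal{P}^{<\infty}(\Lambda))$ and $(\mathcal{P}^{<\infty}(\Lambda),\Ginj(\Lambda))$ in the abelian Krull--Schmidt category $\C:=\modu(\Lambda)$, where $\mathcal{P}^{<\infty}(\Lambda)$ denotes the class of finitely generated $\Lambda$-modules of finite projective dimension. I would first verify these are complete hereditary cotorsion pairs that are homologically compatible in the sense that $\Gproj(\Lambda)\cap\mathcal{P}^{<\infty}(\Lambda)=\proj(\Lambda)=\Q(\C)$ and $\mathcal{P}^{<\infty}(\Lambda)\cap\Ginj(\Lambda)=\inj(\Lambda)=\I(\C)$. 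The cotorsion pairs are produced by the Auslander--Buchweitz / Cohen--Macaulay approximation theorem: every $M\in\modu(\Lambda)$ fits into short exact sequences $K\rightarrowtail G\twoheadrightarrow M$ with $G\in\Gproj(\Lambda)$ and $K\in\mathcal{P}^{<\infty}(\Lambda)$, and dually $M\rightarrowtail L\twoheadrightarrow G'$ with $L\in\mathcal{P}^{<\infty}(\Lambda)$ and $G'\in\Ginj(\Lambda)$ (the latter obtained through the duality $D=\Hom_R(-,E)$). Hereditary follows because $\Gproj(\Lambda)$ is resolving and $\Ginj(\Lambda)$ is coresolving; the perpendicular computations $\Gproj(\Lambda)^{\perp_1}=\mathcal{P}^{<\infty}(\Lambda)={}^{\perp_1}\Ginj(\Lambda)$ use dimension shifting together with the standard splitting trick applied to the above approximations.

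Next, I invoke two standard Gorenstein inputs: every $M\in\modu(\Lambda)$ has finite $\Gpd$ and finite $\Gid$, so $\Gproj(\Lambda)^\wedge=\modu(\Lambda)=\Ginj(\Lambda)^\vee$; and the Gorenstein symmetry $\id({}_\Lambda\Lambda)=\id(\Lambda_\Lambda)$. With these, Theorem~\ref{Sbalance2}(a)--(c) applies with $\X=\Gproj(\Lambda)$, $\mathcal{Z}=\mathcal{P}^{<\infty}(\Lambda)$, $\Y=\Ginj(\Lambda)$, $F=F_{\Gproj(\Lambda)}$, $G=F^{\Ginj(\Lambda)}$, and yields parts (a), (b), (c), after identifying $\resdim_{\Gproj(\Lambda)}(M)=\Gpd(M)$ and $\coresdim_{\Ginj(\Lambda)}(M)=\Gid(M)$ via Remark~\ref{basicosG}.

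For part (d), Theorem~\ref{Sbalance2}(c) collapses (since $\X^\wedge=\Y^\vee=\C$) to $\pd(\I(\C))=\fid_G(\C)=\fpd_F(\C)=\id(\Q(\C))$. Finiteness of $\Gpd$ on all of $\modu(\Lambda)$ gives $\fpd_F(\C)=\pd_F(\C)=\Gpd(\modu(\Lambda))=\Ggldim(\Lambda)$, and dually $\fid_G(\C)=\Gid(\modu(\Lambda))=\Gfindim(\Lambda^{op})=\Ggldim(\Lambda^{op})$; meanwhile $\id(\Q(\C))=\id({}_\Lambda\Lambda)$, and the duality $D$ converts $\pd(\I(\C))$ into $\id(\Lambda_\Lambda)$, yielding the chain in (d). For part (e), since $\Gproj(\Lambda)\subseteq\C=\Ginj(\Lambda)^\vee$ and $\Ginj(\Lambda)\subseteq\C=\Gproj(\Lambda)^\wedge$ hold trivially, Theorem~\ref{Sbalance2}(d3)--(d4) gives
\[\Phi_{\Gproj(\Lambda)}\mathrm{dim}(\C)=\PsiGpdim(\C)=\coresdim_{\Ginj(\Lambda)}(\Gproj(\Lambda))=\Gid(\Gproj(\Lambda))\]
and
\[\Phi^{\Ginj(\Lambda)}\mathrm{dim}(\C)=\Psi^{\Ginj(\Lambda)}\mathrm{dim}(\C)=\resdim_{\Gproj(\Lambda)}(\Ginj(\Lambda))=\Gpd(\Ginj(\Lambda));\]
both common values coincide with $\id({}_\Lambda\Lambda)$ by (d), producing the full string of equalities in (e).

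The main obstacle lies in the structural step, namely producing the two homologically compatible complete hereditary cotorsion pairs in $\modu(\Lambda)$ for a Gorenstein artin algebra; completeness of $(\Gproj(\Lambda),\mathcal{P}^{<\infty}(\Lambda))$ at the level of finitely generated modules is the most delicate input, typically established via the Auslander--Buchweitz machinery applied to the relative cogenerator $\proj(\Lambda)$ inside $\Gproj(\Lambda)$. Once this scaffolding and the finiteness statement $\Gproj(\Lambda)^\wedge=\modu(\Lambda)$ are in place, the rest is a transparent bookkeeping exercise around Theorem~\ref{Sbalance2}, Gorenstein symmetry, and the identification of the various relative dimensions with their classical counterparts.
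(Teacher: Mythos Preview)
Your argument is correct, but it routes through a different theorem than the paper does. The paper applies Theorem~\ref{Sbalance1} directly to the left/right compatible pairs $(\Gproj(\Lambda),\proj(\Lambda))$ and $(\inj(\Lambda),\Ginj(\Lambda))$: it verifies (citing \cite{Holm2} and Remark~\ref{basicosG}) the balance conditions (B1)--(B2) for these pairs, uses \cite[Remark 11.5.10]{EJ} together with Remark~\ref{basicosG}(c) to obtain $\Gproj(\Lambda)^\wedge=\modu(\Lambda)=\Ginj(\Lambda)^\vee$, and then reads off (a)--(e) from Theorem~\ref{Sbalance1}(a)--(e). In particular, part~(e) of the statement drops out in one stroke from Theorem~\ref{Sbalance1}(e3), since $\id(\omega)=\id({}_\Lambda\Lambda)$ is finite by the Gorenstein hypothesis.

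Your alternative via Theorem~\ref{Sbalance2} works equally well: you replace the verification of (B1)--(B2) by the verification that $(\Gproj(\Lambda),\mathcal{P}^{<\infty}(\Lambda))$ and $(\mathcal{P}^{<\infty}(\Lambda),\Ginj(\Lambda))$ are complete hereditary cotorsion pairs with the required intersections, which is a standard consequence of Auslander--Buchweitz approximation for Gorenstein artin algebras (and fits the framework of Remark~\ref{Rkbalance2}(b2), taking $\mathcal{Z}=\mathcal{P}^{<\infty}(\Lambda)$, which is functorially finite, resolving and coresolving for a Gorenstein algebra). The trade-off is that the paper's route needs only the lighter ``compatible pair'' structure and gets (e) from a single clause (e3), whereas your route invokes the full cotorsion-pair machinery and assembles (e) from Theorem~\ref{Sbalance2}(d3)--(d4) together with (d); on the other hand, your input hypotheses (the Gorenstein cotorsion pairs) are more widely quoted in the literature and may feel more familiar.
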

\begin{dem} Note that \cite[Remark 11.5.10]{EJ} say us that $\GProj^{<\infty}(\Lambda)=\Mod\,(\Lambda).$ Then, by Remark \ref{basicasdefi} (c), we conclude that 
$\Gproj^{<\infty}(\Lambda)=\modu\,(\Lambda).$ Using now 
the standard duality functor $D:\modu\,(\Lambda)\to \modu\,(\Lambda^{op}),$ we also have that 
$\Ginj^{<\infty}(\Lambda)=\modu\,(\Lambda).$ Finally, it can be checked, using Remark 
\ref{basicosG} and \cite{Holm2},
that the pairs $(\Gproj(\Lambda),\proj(\Lambda))$ and $(\inj(\Lambda),\Ginj(\Lambda))$ are homologically compatible. Therefore, the result follows from Theorem \ref{Sbalance1}, since $\pd\,(\inj(\Lambda))=\pd\,(D(\Lambda_\Lambda))=\id\,(\Lambda_\Lambda).$
\end{dem}

\begin{teo}\label{T1Sbalance2} Let $\Lambda$ be an artin algebra and $\mathcal{U}$ a resolving, coresolving and functorially finite class in $\modu\,(\Lambda).$ Consider $\X:={}^{\perp_1}\mathcal{U},$  $\Y:=\mathcal{U}^{\perp_1}$  and the functors $F:=F_\X$ and 
$G:=F^\Y.$ If $\X^\wedge\subseteq \Gproj^{<\infty}(\Lambda)$ and $\Y^\vee\subseteq \Ginj^{<\infty}(\Lambda),$
the following statements hold true.
\begin{itemize}
\item[(a)] $\Gid\,(\Y^\vee)=\fid_G(\Lambda)=\fpd_F(\Lambda)=\Gpd\,(\X^\wedge).$
\item[(b)] $\fpd_F(\Lambda)\leq\Phi_{\X}\mathrm{dim}(\Lambda)\leq\Psixdim(\Lambda)\leq\pd_\X\,(\mathcal{U}).$
\item[(c)] $\fid_G(\Lambda)\leq\Phi^{\Y}\mathrm{dim}(\Lambda)\leq\Psi^{\Y}\mathrm{dim}(\Lambda)\leq\id_\Y\,(\mathcal{U}).$
\end{itemize}
\end{teo}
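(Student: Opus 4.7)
The plan is to first reduce to the homologically compatible cotorsion-pair setting of Theorem~\ref{Sbalance2}. Since $\modu\,(\Lambda)$ is a Krull--Schmidt abelian category with enough projectives and injectives and $\mathcal{U}$ is resolving, coresolving, and functorially finite, Remark~\ref{Rkbalance2}(b2) delivers that $(\X,\mathcal{U})$ and $(\mathcal{U},\Y)$ are homologically compatible complete hereditary cotorsion pairs, with $\X\cap\mathcal{U}=\proj\,(\Lambda)$ and $\mathcal{U}\cap\Y=\inj\,(\Lambda).$ Theorem~\ref{Sbalance2} is therefore available, and I will use freely the identities $\pd_F(M)=\resdim_\X(M),$ $\id_G(N)=\id_F(N)=\coresdim_\Y(N),$ and the chain $\fpd_F(\Lambda)=\fid_G(\Lambda)=\id_{\X^\wedge}(\proj\,(\Lambda))=\pd_{\Y^\vee}(\inj\,(\Lambda))$ it produces.

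For part~(a) the middle equality is already inside that chain. To close the two outer equalities I invoke Holm's characterisation \cite{Holm2} of Gorenstein projective dimension: for any module with $\Gpd(M)<\infty$ one has $\Gpd(M)=\sup\{i\geq 0:\Ext^i_\Lambda(M,\Lambda)\neq 0\}.$ Applied to each $M\in\X^\wedge\subseteq\Gproj^{<\infty}\,(\Lambda),$ and using that every finitely generated projective is a direct summand of a free module, this yields $\Gpd(\X^\wedge)=\id_{\X^\wedge}(\Lambda)=\id_{\X^\wedge}(\proj\,(\Lambda)).$ The dual argument gives $\Gid(\Y^\vee)=\pd_{\Y^\vee}(\inj\,(\Lambda)),$ and combining these with the chain above finishes~(a).

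For part~(b), the first two inequalities are Remark~\ref{primeras}(a) applied with $\Y=\C.$ For the last, $\modu\,(\Lambda)$ is abelian and hence weakly idempotent complete, so Theorem~\ref{experp1.5}(b) provides $\Psixdim(\Lambda)\leq\id_F(\Q(\E_F))=\id_F(\X),$ while Theorem~\ref{Sbalance2}(b) rewrites $\id_F(X)=\coresdim_\Y(X)$ for $X\in\X.$ It therefore suffices to show $\coresdim_\Y(\X)\leq n:=\pd_\X(\mathcal{U})$ when $n<\infty.$ Starting with $C^0:=X\in\X,$ I will iterate the special $(\mathcal{U},\Y)$-preenvelopes supplied by completeness of the cotorsion pair to build short exact sequences $0\to C^i\to Y^i\to C^{i+1}\to 0$ with $Y^i\in\Y$ and $C^{i+1}\in\mathcal{U}.$ Dimension-shifting through the hereditary pair $(\mathcal{U},\Y)$ (whose defining property kills $\Ext^{\geq 1}_\Lambda(\mathcal{U},\Y)$) then produces $\Ext^1_\Lambda(U,C^n)\simeq\Ext^{n+1}_\Lambda(U,X)$ for every $U\in\mathcal{U},$ and the right-hand side vanishes by definition of $n;$ hence $C^n\in\mathcal{U}^{\perp_1}=\Y$ and the concatenated sequence is a $\Y$-coresolution of $X$ of length at most $n.$ Statement~(c) then follows from the same argument applied to the opposite algebra via the duality $D=\Hom_R(-,E),$ which swaps the two cotorsion pairs.

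The main obstacle I anticipate is the Gorenstein identification in~(a): the bridge from the $\Ext$-vanishing quantity produced by Theorem~\ref{Sbalance2}(c) to $\Gpd(\X^\wedge)$ is external to the paper, resting on Holm's theorem, and it genuinely needs the hypothesis $\X^\wedge\subseteq\Gproj^{<\infty}\,(\Lambda)$ because that formula for $\Gpd$ is only available in finite Gorenstein dimension. The dimension-shifting estimate in~(b) is, by contrast, entirely internal and becomes routine once heredity of the cotorsion pair is used to upgrade ${}^{\perp_1}\mathcal{U}$ to ${}^\perp\mathcal{U}.$
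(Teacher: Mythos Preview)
Your proof is correct and follows essentially the same route as the paper: reduce to homologically compatible hereditary cotorsion pairs via Remark~\ref{Rkbalance2}(b2), apply Theorem~\ref{Sbalance2} for the chain in (a) and the first three inequalities in (b), and then identify $\id_{\X^\wedge}(\proj(\Lambda))$ with $\Gpd(\X^\wedge)$ through Holm's characterisation (the paper cites \cite[Theorem~2.20]{Holm2} for exactly this step). The only difference is that for the final inequality $\coresdim_\Y(\X)\leq\pd_\X(\mathcal{U})$ the paper quotes \cite[Lemma~3.3]{MS} whereas you supply the underlying dimension-shifting argument directly; your version is self-contained and the paper's is a one-line citation, but the content is identical.
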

\begin{dem} Note that $\modu\,(\Lambda)$ is an skeletally small category with enough injectives and projectives.  Moreover,  From Remark \ref{Rkbalance2}, we get that the complete hereditary cotorsion pairs $(\X,\mathcal{U})$ and 
$(\mathcal{U},\Y)$ are homologically compatible. Then, by Theorem \ref{Sbalance2} (d1),
it follows that 
$$\fpd_F(\Lambda)\leq\Phi_{\X}\mathrm{dim}(\Lambda)\leq\Psixdim(\Lambda)\leq\coresdim_{\mathcal{U}^{\perp_1}}(\X).$$
Now, by using \cite[Lemma 3.3]{MS}, we get that $\coresdim_{\mathcal{U}^{\perp_1}}(M)\leq\id_{\mathcal{U}}(M)$ for any $M\in\modu\,(\Lambda).$ Hence, we have that 
$$\coresdim_{\mathcal{U}^{\perp_1}}(\X)\leq\id_{\mathcal{U}}(\X)=
\pd_\X(\mathcal{U}),$$
proving  (b). Furthermore, from Theorem  \ref{Sbalance2} (c), we get  
$$\fpd_F(\Lambda)=\pd_{\proj\,(\Lambda)}(\X^\wedge)=\Gpd(\X^\wedge),$$
where the last equality follows from \cite[Theorem 2.20]{Holm2}. Finally, in a similar way, it can shown that (c)  and 
$\Gid\,(\Y^\vee)=\fid_G(\Lambda)=\fpd_F(\Lambda)$ hold true.
\end{dem}

\vspace{1cm}

{\sc Acknowledgments.} The authors thanks  the Project PAPIIT-Universidad Nacional Aut\'onoma de M\'exico IN102914. This work has been partially supported by  project MathAmSud-RepHomol.

\vskip3mm \noindent Marcelo Lanzilotta:\\ Instituto de Matem\'atica y Estad\'{i}stica ``Rafael Laguardia",\\
J. Herrera y Reissig 565, Facultad de Ingenier\'{i}a, Universidad de la Rep\'ublica.\\ CP 11300, Montevideo, URUGUAY.\\
{\tt marclan@fing.edu.uy}

\vskip3mm \noindent Octavio Mendoza Hern\'andez:\\ Instituto de Matem\'aticas, Universidad Nacional Aut\'onoma de M\'exico.\\
Circuito Exterior, Ciudad Universitaria, C.P. 04510, Ciudad de M\'exico,  M\'EXICO.\\
{\tt omendoza@matem.unam.mx}

\end{document}